\newcommand{\RR}[0]{\mathbb{R}}
\newcommand{\HH}[0]{\mathbb{H}}
\newcommand{\HHH}[0]{\mathcal{H}}
\newcommand{\MM}[0]{\mathcal{M}}
\newcommand{\AM}[0]{\mathcal{AM}}
\newcommand{\MCG}[0]{\mathcal{MCG}}
\newcommand{\TT}[0]{\mathcal{T}}
\newcommand{\CC}[0]{\mathcal{C}}
\newcommand{\ZZ}[0]{\mathbb{Z}}
\newcommand{\NN}[0]{\mathbb{N}}
\newcommand{\base}[0]{\mathrm{base}}
\newcommand{\tmu}[0]{\widetilde{\mu}}
\newcommand{\bmu}[0]{\widetilde{\mathbf{\mu}}}
\newcommand{\tnu}[0]{\widetilde{\nu}}
\newcommand{\bnu}[0]{\widetilde{\mathbf{\nu}}}
\newcommand{\Fmu}[0]{\mathbf{F}_{\bmu}}
\newcommand{\Fnu}[0]{\mathbf{F}_{\bnu}}
\newcommand{\Fwxx}[0]{\mathbf{F}_{\bW,\bx'}}
\newcommand{\Fwmu}[0]{\mathbf{F}_{\bW,\bmu}}
\newcommand{\bx}[0]{\widetilde{\mathbf{x}}}
\newcommand{\by}[0]{\widetilde{\mathbf{y}}}
\newcommand{\tx}[0]{\widetilde{x}}
\newcommand{\ty}[0]{\widetilde{y}}
\newcommand{\bz}[0]{\widetilde{\mathbf{z}}}
\newcommand{\bW}[0]{\mathbf{W}}
\newcommand{\bV}[0]{\mathbf{V}}
\newcommand{\tz}[0]{\widetilde{z}}
\newcommand{\Tgamma}[0]{\widetilde{\Gamma}}
\newcommand{\Tlambda}[0]{\widetilde{\Lambda}}
\newcommand{\HPhi}[0]{\widehat{\Phi}}
\newcommand{\Pwmu}[0]{P_{\bW,\bmu}}
\newcommand{\Pwxx}[0]{P_{\bW,\bx'}}
\newcommand{\ind}[0]{\text{ind}}
\newcommand{\Ind}[0]{\text{Ind}}
\newcommand{\wind}[0]{\widehat{\text{ind}}}
\newcommand{\wInd}[0]{\widehat{\text{Ind}}}
\newcommand{\wdim}[0]{\widehat{\dim}}
\newtheorem {theorem}{Theorem}[section] 
\newtheorem {lemma} [theorem] {Lemma}
\newtheorem {proposition} [theorem] {Proposition}
\newtheorem {corollary} [theorem] {Corollary}
 \newtheorem{definition}[theorem]{Definition}
\begin{document}

\title{The asymptotic geometry of the Teichm\"uller metric: Dimension and rank}
\author{Matthew Gentry Durham}
\address{Department of Mathematics, University of Michigan, 3079 East Hall, 530 Church Street, Ann Arbor, MI 48109}
\email{durhamma(at)umich.edu}
\maketitle
\begin{abstract}

We analyze the asymptotic cones of Teichm\"uller space with the Teichm\"uller metric, $(\TT(S),d_T)$.  We give a new proof of a theorem of Eskin-Masur-Rafi \cite{EMR13} which bounds the dimension of quasiisometrically embedded flats in $(\TT(S),d_T)$.  Our approach is an application of the ideas of Behrstock \cite{Beh06} and Behrstock-Minsky \cite{BM08} to the quasiisometry model we built for $(\TT(S),d_T)$ in \cite{Dur13}. 
\end{abstract}

\section{Introduction}

In this paper, we study the coarse geometry of Teichm\"uller space with the Teichm\"uller metric, $(\TT(S),d_T)$ via its asymptotic cones.   Our main goal is to bound the dimension of a quasiisometrically embedded flat $(\TT(S),d_T)$, which is called the \emph{geometric rank}.  Since quasiisometrically embedded subspaces of $(\TT(S),d_T)$ become bi-Lipschitz embedded subspaces in its cones, this involves studying bi-Lipschitz flats in the cones.\\

For $S=S_{g,n}$, the number of curves in a pants decomposition of $S$ is $r(S) = 3g-3+n$, which we call the \emph{topological dimension} of $S$.  Our main theorem is the following:

\begin{theorem}\label{r:main intro}
The maximal topological dimension of a locally compact subset of any asymptotic cone of $(\TT(S),d_T)$ is $r(S)$.
\end{theorem}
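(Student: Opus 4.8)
The plan is to transport the coordinate-projection technology of Behrstock \cite{Beh06} and Behrstock--Minsky \cite{BM08} to the quasiisometry model $\XX(S)$ for $(\TT(S),d_T)$ constructed in \cite{Dur13}, and then to run the Behrstock--Minsky dimension argument with $\XX(S)$ in the role of the marking complex. Recall that $\XX(S)$ carries coarse projections $\pi_W$ onto the hyperbolic complexes $\CC(W)$, one for each essential non-annular subsurface $W\subseteq S$, and $\pi_\alpha$ onto the combinatorial horoball $\HH_\alpha$ over $\CC(\alpha)\simeq\ZZ$, one for each simple closed curve $\alpha$; these satisfy a Rafi-type distance formula together with the Behrstock inequality and the consistency and partial-realization properties. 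Fix an asymptotic cone $\XX_\omega$. Each projection $\pi_W$ (or $\pi_\alpha$) limits to a Lipschitz map onto the ultralimit $T_W$ (or $T_\alpha$) of the corresponding rescaled complexes, and these limits are $\RR$-trees since $\CC(W)$, $\CC(\alpha)$ and the horoballs $\HH_\alpha$ are uniformly Gromov hyperbolic. The distance formula passes to the limit as a bi-Lipschitz comparison of $d_\omega$ with an $\ell^1$-type combination of the $d_{T_W}(\pi_W^\omega\cdot,\pi_W^\omega\cdot)$, while the Behrstock inequality becomes a rigid dichotomy: if $W$ and $V$ fill a common subsurface (so that neither is nested in the other and they are not disjoint), then the image of $(\pi_W^\omega,\pi_V^\omega)$ lies in the union of the two subtrees $T_W\times\{\rho^V_W\}$ and $\{\rho^W_V\}\times T_V$, which meet only at their common corner; nested domains satisfy the analogous limiting consistency constraint.

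For the lower bound I would use the product-region structure already present in the model: for a pants decomposition $\{\alpha_1,\dots,\alpha_{r(S)}\}$ the associated thin part is bi-Lipschitz to $\prod_{i=1}^{r(S)}\HH_{\alpha_i}$, the complementary pieces being thrice-holed spheres with trivial Teichm\"uller space, so $\XX_\omega$ contains a bi-Lipschitz copy of $\prod_i T_{\alpha_i}$. Each $T_{\alpha_i}$ is a non-degenerate $\RR$-tree, hence this product contains a bi-Lipschitz image of the cube $[0,1]^{r(S)}$, a compact --- in particular locally compact --- subset of topological dimension $r(S)$.

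The upper bound I would prove by induction on the complexity $\xi(S)$, the cases $\xi(S)\le 1$ following from $\TT(S)\simeq\HH^2$, whose asymptotic cones are $\RR$-trees. Let $A\subseteq\XX_\omega$ be locally compact. Since covering dimension is local for metric spaces and each point of $A$ has a relatively compact neighbourhood, it suffices to bound $\dim K$ for a compact neighbourhood $K$ of an arbitrary $x\in A$, and (by the finite sum theorem) to do so near each point of $K$. The structural core, following \cite{BM08}, is that near $x$ the cone is bi-Lipschitz to a product of tree-like factors indexed by a disjoint family of domains: the domains $W$ whose limiting projection is non-constant on every neighbourhood of $x$ have pairwise disjoint $\subseteq$-maximal members $U_1,\dots,U_m$ --- filling or nested pairs cannot both carry non-constant projections at a common point, which is the Behrstock dichotomy above pushed through the ultralimit --- and if $S$ is itself one of them then it is the only one. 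In the latter case the limiting distance formula makes $\pi_S^\omega$ a bi-Lipschitz embedding of a neighbourhood of $x$ into the $\RR$-tree $T_S$, so $\dim K\le 1\le r(S)$. Otherwise the $U_i$ are proper, and partial realization together with the distance formula identifies a neighbourhood $N$ of $x$, bi-Lipschitzly, with a subset $S'$ of $\prod_{i=1}^m Z_i$, where $Z_i$ is an asymptotic cone of $(\TT(U_i),d_T)$ when $U_i$ is non-annular and the $\RR$-tree $T_{\alpha}$ when $U_i$ is an annulus, the coordinates being the $\pi_W^\omega$ with $W\subseteq U_i$ --- which are independent across distinct $i$ precisely because the $U_i$ are disjoint. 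Taking $N$ relatively compact, the images $Y_i$ of $S'$ in the factors are compact, so by induction (each proper non-annular $U_i$ has $\xi(U_i)<\xi(S)$) and the $\RR$-tree bound (for annular $U_i$) we get $\dim Y_i\le\xi(U_i)$; and since the $U_i$ are disjoint essential subsurfaces, $\sum_i\xi(U_i)\le\xi(S)=r(S)$. Hence $\dim K\le\dim S'\le\dim\prod_i Y_i\le\sum_i\dim Y_i\le r(S)$, so $\dim A\le r(S)$, and with the lower bound this proves the theorem, reproving the bound of Eskin--Masur--Rafi \cite{EMR13}.

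The step I expect to be the real obstacle is making the structural picture uniform, namely producing a single neighbourhood of $x$ on which every projection outside $\bigcup_i\{W:W\subseteq U_i\}$ is genuinely constant and on which the product identification is honestly bi-Lipschitz. This requires controlling the ultralimit of the distance formula over the uncountable index set of all domains --- in particular showing that in any bounded region only finitely many domains contribute to $d_\omega$ above a fixed threshold, so that the tails of the sum are uniformly negligible --- and then upgrading partial realization to the cone to assemble the product chart. A second delicate point is the non-separability of $\XX_\omega$, which is why the argument is organised around compact subsets $A$ from the start: then finitely many product charts cover $A$, and the elementary dimension theory invoked above (monotonicity under subspaces, the product inequality, local-to-global) applies to compact metric spaces without further qualification.
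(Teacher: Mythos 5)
The structural core of your upper bound is a claim that is false in the cone, and this is exactly the point where your route diverges from (and falls short of) what the machinery can deliver. You assert that at a point $x$ the $\subseteq$-maximal domains whose limiting projections are non-constant on every neighbourhood of $x$ are pairwise disjoint, ``because filling or nested pairs cannot both carry non-constant projections at a common point.'' The limiting Behrstock inequality does not say this: for $W\pitchfork V$ it only constrains the joint image of $(\pi^\omega_W,\pi^\omega_V)$ to the wedge $\bigl(T_W\times\{\rho^W_V\}\bigr)\cup\bigl(\{\rho^V_W\}\times T_V\bigr)$, and at the corner point of that wedge \emph{both} projections are non-constant on every neighbourhood. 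Such corner points genuinely occur: take a thick basepoint sequence and approach it along partial translations supported on $W$ and, separately, on $V$ with $W\pitchfork V$; the limit point $x$ then has overlapping maximal ``active'' domains. The same happens for nested pairs $V\subsetneq W$ when $\pi^\omega_W(x)$ sits at the limit of $\pi_W(\partial V)$. Consequently the local bi-Lipschitz chart $N\hookrightarrow\prod_i Z_i$ over disjoint domains $U_i$ does not exist in general, and the inductive count $\sum_i r(U_i)\le r(S)$ never gets started. (A secondary issue, which you flag but which is also not merely technical: even when the active domains do nest into disjoint $U_i$'s, points near $x$ need not lie in the limiting product region $Q_\omega(\bigcup_i\partial U_i)$ --- one must show the distance $\rho(\cdot,\partial U_i)$ vanishes there, which is not automatic.)

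The paper avoids any local product chart precisely because of this wedge phenomenon. Its route, following Behrstock--Minsky, is: (i) split into thick and thin basepoint sequences, the thin cones being genuine products handled by induction (Theorem \ref{thin cones}, Corollary \ref{dim thm 3}); (ii) in a thick cone, for each point and each sequence of subsurfaces $\bW$ build the sublinear-growth $\RR$-tree $\Fwmu$ inside $\AM_{\omega}(\bW)$ together with a \emph{locally constant} retraction $\Phi_{\bW,\bmu}:\AM_{\omega}(S)\to\Fwmu$ (Theorems \ref{Beh rtree}, \ref{rtree}, \ref{global}); (iii) given two points, choose $\bW$ minimal with positive limiting projection distance, and use a cut point $\bz$ of the tree to produce a separator $L\approx\{\bz\}\times\AM_{\omega}(\bW^c)$ of strictly smaller complexity (Theorem \ref{sep thm}); (iv) conclude by the dimension-theoretic separation lemma (Lemma \ref{sep lemma}) and induction on $r$. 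Your lower bound via Minsky's product regions agrees with the paper's. If you want to salvage a ``local'' argument rather than a separator argument, you essentially need the median/coarse-median structure of the cone (Bowditch's approach), which is a substantially different and heavier input than the Behrstock dichotomy you invoke.
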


As an immediate corollary, we obtain:

\begin{corollary}\label{r:rank intro}
The geometric rank of $(\TT(S),d_T)$ is bounded above by $r(S)$.
\end{corollary}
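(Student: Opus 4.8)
\emph{Proof strategy.} Theorem \ref{r:main intro} immediately gives Corollary \ref{r:rank intro}: a quasiisometric embedding $\RR^N\hookrightarrow(\TT(S),d_T)$ induces a bi-Lipschitz embedding of $\RR^N$ into any asymptotic cone $\TT_\omega$, and its image is locally compact of topological dimension $N$, so $N\le r(S)$. It therefore suffices to prove Theorem \ref{r:main intro}. By \cite{Dur13}, $(\TT(S),d_T)$ is quasiisometric to a combinatorial model built from markings together with length data, so the two spaces have the same asymptotic cones; fix one and call it $\TT_\omega$. The theorem has two halves: that $\TT_\omega$ contains a compact subset of topological dimension $r(S)$, and that every locally compact subset of $\TT_\omega$ has topological dimension at most $r(S)$.

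For the first half I would use the product regions theorem of Minsky and Rafi, in the form packaged by \cite{Dur13}: for a pants decomposition $\Delta=\{\alpha_1,\dots,\alpha_{r(S)}\}$ and small $\epsilon$, the thin region $\mathrm{Thin}_\epsilon(\Delta)$, with its subspace metric, is quasiisometric --- in the sup metric --- to a product $\prod_{i=1}^{r(S)}\HHH_{\alpha_i}$ of $r(S)$ hyperbolic horoball regions. Since passing to a subspace commutes with taking a cone, and the cone of a finite sup-product is the sup-product of the cones, $\TT_\omega$ contains a bi-Lipschitz copy of $\prod_{i=1}^{r(S)}(\HHH_{\alpha_i})_\omega$. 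Each factor is a cone of a Gromov hyperbolic space, hence an $\RR$-tree, and a nondegenerate $\RR$-tree contains an isometric interval; so $[0,1]^{r(S)}$ embeds bi-Lipschitzly into $\TT_\omega$, giving a compact subset of topological dimension $r(S)$.

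For the upper bound --- the real content --- I would induct on $r(S)$, following Behrstock \cite{Beh06} and Behrstock--Minsky \cite{BM08}. When $r(S)\le 1$, $\TT(S)$ is a point or a copy of $\HH^2$, so $\TT_\omega$ is a point or an $\RR$-tree and there is nothing to prove. In general, the model of \cite{Dur13} presents $\TT_\omega$ as a union of product-region pieces $Q_\omega$, one for each $\omega$-sequence of nonempty multicurves $\Delta$ (each the ultralimit of the thin regions $\mathrm{Thin}_\epsilon(\Delta)$), together with an ultralimit of the thick part. By the product regions theorem each such $Q_\omega$ splits as a sup-product $\prod_{Z\in\pi_0(S\setminus\Delta)}\TT_\omega(Z)\times\prod_{\gamma\in\Delta}(\HHH_\gamma)_\omega$, the last factors being $\RR$-trees; since $r(S)=\sum_Z r(Z)+|\Delta|$ and every $Z$ is a proper subsurface, subadditivity of covering dimension together with the inductive hypothesis gives $\dim Q_\omega\le r(S)$. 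The thick part contributes no annular or thin-region terms to Rafi's distance formula, so it is governed by non-annular subsurfaces alone --- it behaves like the pants graph $\PP(S)$ (equivalently the Weil--Petersson metric) --- and a parallel argument bounds the dimension of its ultralimit by $r(S)$, in fact strictly less.

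The hard part is upgrading these piecewise estimates to a bound on $\dim K$ for an arbitrary locally compact $K\subseteq\TT_\omega$, since a cover by pieces of dimension $\le r(S)$ does not by itself control $\dim K$. Following \cite{BM08}, the crucial input is Behrstock's inequality from \cite{Beh06}, which forbids a point of $\TT_\omega$ from having nontrivial coordinates in two transverse subsurfaces at once, and so forces the pieces $Q_\omega$ passing through any given point to be arranged along the $\RR$-tree $\CC_\omega(S)$ --- the cone of the curve complex --- in a ``tree-graded''-like pattern of bounded multiplicity. Fed into a refinement argument for open covers, this yields $\dim K\le r(S)$; local compactness is used in earnest here, both so that covering dimension behaves well and because asymptotic cones are not separable. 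I expect the main obstacle to be precisely this assembly step, together with verifying that the model of \cite{Dur13} genuinely supplies, at every level of complexity, the hierarchical ``tree-graded over $\CC_\omega(S)$'' structure that the argument consumes; it should be driven by the combinatorics of subsurface projections rather than by direct estimates in the Teichm\"uller metric.
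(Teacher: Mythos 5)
Your first sentence reproduces the paper's own proof of the corollary exactly: a quasiisometrically embedded $\RR^N$ becomes a bi-Lipschitz embedded, locally compact copy of $\RR^N$ in an asymptotic cone, so Theorem \ref{r:main intro} gives $N\le r(S)$. That part is correct and is all the paper does for this statement. The problem is in the remainder, where you take on Theorem \ref{r:main intro} itself: the upper bound as you sketch it has a genuine gap at precisely the step you flag as ``the assembly.'' Bounding each product piece $Q_{\omega}$ by $r(S)$ cannot feed any assembly argument, since that is already the target bound, and covering dimension is not controlled by an (uncountable) union of such pieces; your proposed fix --- a ``tree-graded over $\CC_{\omega}(S)$'' pattern of bounded multiplicity plus a refinement argument for open covers --- is not carried out, is not what Behrstock--Minsky do, and is not obviously repairable. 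The paper's actual engine is different: for a point $\bmu$ of a thick cone one forms the set $\Fmu$ of points all of whose proper subsurface projections differ sublinearly from $\bmu$; this is an $\RR$-tree admitting a locally constant retraction from the whole cone (Theorems \ref{Beh rtree} and \ref{rtree}), and more generally for a sequence of proper subsurfaces $\bW$ there is a locally constant retraction $\Phi_{\bW,\bmu}$ onto $\Fwmu$ (Theorem \ref{global}, where Behrstock's inequality \ref{beh ineq} enters). From these one produces, for any two distinct points, a separator $L\approx\{\bz\}\times\AM_{\omega}(\bW^c)$ with $r(\bW^c)\le r(S)-1$ (Theorem \ref{sep thm}), and concludes by the dimension-theory lemma that a point-separating family of subspaces with $\wind\le N-1$ forces $\wind\le N$ (Lemma \ref{sep lemma}), with the induction running through the separators rather than through a decomposition into pieces. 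You cite Behrstock's inequality but omit the sublinear-growth trees, the locally constant retractions, and the separator lemma, which are the actual content.

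Two smaller corrections. The thick/thin dichotomy in the paper is a dichotomy of basepoint sequences, hence of distinct cones (thin cones are globally products, Theorem \ref{thin cones}, and are handled by induction and subadditivity of $\ind$); it is not a decomposition of a single cone into a thick ultralimit plus thin pieces, and a thick cone still contains all the sets $Q_{\omega}(\partial\bW)$. Also, the $\epsilon$-thick part of $(\TT(S),d_T)$ is quasiisometric to $\MCG(S)$, not to the pants graph or the Weil--Petersson metric, and by \cite{BM08} its cones have locally compact subsets of dimension exactly $\xi(S)=r(S)$, so the claim that the thick contribution is ``in fact strictly less'' is wrong as stated. Your lower-bound paragraph (an $r(S)$-dimensional orthant, hence a cube, from Minsky's product regions) does match the paper.
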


It follows from a theorem of Minsky \cite{Min96} (see Theorem \ref{r:min prod} below) that the geometric rank is at least $r(S) - 1$.  Bowditch \cite{Bow14} has shown this bound is sharp in most cases.\\


Along the way toward the main theorem, we prove a related result about the coarse geometry of $(\TT(S),d_T)$.  In \cite{MM99}, Masur-Minsky proved that $(\TT(S),d_T)$ is weakly hyperbolic relative to its thin parts.  Introduced by Behrstock-Drutu-Mosher \cite{BDM08} , the notion of \emph{thickness} measures how far away a metric space is from being strongly relatively hyperbolic.  We prove:

\begin{theorem} \label{r:thick main}
$(\TT(S),d_T)$ is thick and thus not strongly hyperbolic relative to any collection of subspaces.
\end{theorem}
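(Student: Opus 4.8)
The plan is to verify that $(\TT(S),d_T)$ satisfies the definition of a \emph{thick} metric space (of order at most one) from Behrstock--Drutu--Mosher \cite{BDM08}; the ``thus'' in the statement is then immediate, since they prove that a thick space is never strongly hyperbolic relative to any collection of subsets. We may assume $r(S)\ge 2$, since for $r(S)\le 1$ the space $\TT(S)$ is a point or is isometric to $\HH^2$ and there is nothing to prove. The network of order-zero (\emph{wide}) pieces I will use is $\mathcal L=\{Q_\gamma\}$, where $\gamma$ runs over isotopy classes of essential simple closed curves on $S$ and $Q_\gamma=\{X\in\TT(S):\ell_\gamma(X)\le\epsilon\}$ for a fixed $\epsilon>0$ below the threshold in Minsky's product regions theorem (Theorem~\ref{r:min prod}). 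Since $\MCG(S)$ acts on $(\TT(S),d_T)$ by isometries with $g\cdot Q_\gamma=Q_{g\gamma}$, there are finitely many isometry types among the $Q_\gamma$, so all constants appearing below are automatically uniform.

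First I would show each $Q_\gamma$ is unconstricted. By Minsky's theorem, $Q_\gamma$ with its induced path metric is quasi-isometric, via the Fenchel--Nielsen coordinates of $\gamma$, to the sup-metric product $\HH_\gamma\times\TT(S\setminus\gamma)$, where $\HH_\gamma$ is a horoball. A short computation shows $r(S\setminus\gamma)=r(S)-1\ge 1$ whether or not $\gamma$ separates, so $\TT(S\setminus\gamma)$ is a Teichm\"uller space of positive complexity, hence unbounded, and $\HH_\gamma$ is unbounded as well. Thus every asymptotic cone of $Q_\gamma$ is a product of two unbounded geodesic spaces, and such a product has no cut point: given $p=(a,b)$, pick $a'\neq a$ and $b'\neq b$ (possible since both factors are unbounded) and join $(a',b)$ to $(a',b')$ and then $(a',b')$ to $(a,b')$ inside single slices, never meeting $p$. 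This is the standard reason products are wide in \cite{BDM08}, so $Q_\gamma$ is unconstricted.

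Next I would check the two remaining network conditions. \emph{Coarse covering}: any $X$ having a curve shorter than $\epsilon$ lies in $\bigcup_\gamma Q_\gamma$, so only the $\epsilon$-thick part $\TT_{\ge\epsilon}(S)=\{X:\ell_\gamma(X)\ge\epsilon\text{ for all }\gamma\}$ needs attention; the $1$-Lipschitz, $\MCG(S)$-invariant function $X\mapsto d_T(X,\partial\TT_{\ge\epsilon}(S))$ descends to the compact quotient $\TT_{\ge\epsilon}(S)/\MCG(S)$ (Mumford compactness), hence is bounded by some $D$, and since $\partial\TT_{\ge\epsilon}(S)\subseteq\{X:\min_\gamma\ell_\gamma(X)=\epsilon\}\subseteq\bigcup_\gamma Q_\gamma$ this gives $\TT(S)=N_D(\bigcup_\gamma Q_\gamma)$. \emph{Thick chaining}: if $\gamma,\delta$ are disjoint then $Q_\gamma\cap Q_\delta$ contains $\{X:\ell_\gamma(X)\le\epsilon,\ \ell_\delta(X)\le\epsilon\}$, and hence a full Dehn-twist orbit $\{T_\gamma^nX\}_{n\in\ZZ}$, which has infinite $d_T$-diameter; for arbitrary $\gamma,\delta$, connectedness of the curve complex $\CC(S)$ (valid since $r(S)\ge2$; see \cite{MM99}) produces a chain $\gamma=\gamma_0,\dots,\gamma_k=\delta$ of successively disjoint curves, hence a chain $Q_{\gamma_0},\dots,Q_{\gamma_k}$ with infinite-diameter consecutive intersections. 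Together these exhibit $(\TT(S),d_T)$ as thick of order at most one.

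The step I expect to require the most care is the coarse covering. For the mapping class group the analogous covering is essentially free, since a network of cosets of curve stabilizers covers the group because the cosets partition it; here the wide regions $Q_\gamma$ genuinely omit the thick part of $\TT(S)$, and Mumford compactness --- packaged as a uniform bound on the distance from the thick part to $\partial\TT_{\ge\epsilon}(S)$ --- is what repairs this. A secondary technical point is to invoke Minsky's theorem in the form comparing $d_T$ on the entire region $\{\ell_\gamma\le\epsilon\}$, not only on a cusp neighborhood in which $\gamma$ alone is short, to the sup metric on $\HH_\gamma\times\TT(S\setminus\gamma)$, so that each $Q_\gamma$ is honestly quasi-isometrically embedded and the finitely-many-types uniformity is visible.
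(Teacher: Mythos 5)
Your argument is correct, and the network you build---regions where a fixed curve is $\epsilon$-short, wide because of Minsky's product structure, chained via connectivity of $\CC(S)$---is morally the same one the paper uses, but the implementation is genuinely different. The paper works in the quasiisometry model $\AM(S)$ (Theorem \ref{r:aug qi}) with $\mathcal{Y}=\{Q(\alpha)\}_{\alpha\in\CC(S)}$, where $Q(\alpha)$ consists of augmented markings with $\alpha$ in the base: there the covering is exact (every augmented marking has a base, so $\AM(S)=\bigcup_\alpha Q(\alpha)$ with $\tau=0$), the intersections $Q(\alpha)\cap Q(\beta)=Q(\alpha\cup\beta)$ for disjoint curves are visibly unbounded, and unconstrictedness of the pieces comes from the distance-formula product decomposition (Lemma \ref{prod reg}). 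Working directly in $(\TT(S),d_T)$ you must pay for the covering condition, and your Mumford-compactness argument (a continuous, $\MCG(S)$-invariant, $1$-Lipschitz distance-to-$\partial\TT_{\geq\epsilon}(S)$ function descending to the compact thick part of moduli space) does this correctly; your Dehn-twist-orbit argument for thick chaining and your product/no-cut-point argument for wideness of the $Q_\gamma$ are likewise fine, and the two technical points you flag---comparing $d_T$ on all of $\{\ell_\gamma\leq\epsilon\}$ to the sup metric on $\HH_\gamma\times\TT(S\setminus\gamma)$ so that the restricted and induced path metrics agree up to quasiisometry, and uniformity of constants via the finitely many $\MCG(S)$-orbits of curves---are exactly the ones that need to be spelled out to meet the Behrstock--Drutu--Mosher definition. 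The other substantive difference is that you only establish thickness of order at most one, which indeed suffices for the stated conclusion by \cite{BDM08}, whereas the paper's Corollary \ref{r:thick} pins down the order as exactly one by verifying condition $(T_1)$ through the cut-point theorem for thick cones (Corollary \ref{not homeo}, a consequence of the $\RR$-tree Theorem \ref{Beh rtree}); your route buys independence from all of that asymptotic-cone machinery, at the cost of not determining the order. One small caution: for $r(S)=1$ the claim is not merely ``nothing to prove''---$\TT(S)$ is then quasiisometric to $\HH^2$, which is hyperbolic and hence not thick---so, as in Corollary \ref{r:thick}, the theorem must be read with the standing hypothesis $\xi(S)>1$.
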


While this result is likely not surprising to the experts, it is, to our knowledge, new.

\subsection{Related results}

Corollary \ref{r:rank intro} was recently obtained by Eskin-Masur-Rafi \cite{EMR13}.  During the final stages of preparation of this work, Bowditch \cite{Bow14} proved Theorem \ref{r:main intro} and Corollary \ref{r:rank intro}.  Although all three approaches use the Masur-Minsky hierarchy machinery \cite{MM99, MM00, Raf07, Dur13} as a starting point, they are all quite different.\\

Our approach closely follows that of Behrstock \cite{Beh06} and Behrstock-Minsky \cite{BM08} for studying the asymptotic cones of the mapping class group and Teichm\"uller space with the Weil-Petersson metric.  We hope that our exposition may help further emphasize the strong connections between these three spaces.

\section{Preliminaries}

In this section, we gather the tools necessary for the exposition which follows.

\subsection{Notation}

For the remainder of the paper, let $S=S_{g,n}$ be a surface of finite type.\\

We use the following notation to help control constants: For any numbers $A$ and $B$, we write $A \prec B$ when there are uniform constants $K, C >0$ depending only on $S$ such that $A \leq K \cdot B + C$.  If $A \prec B$ and $B \prec A$, we write $A \asymp B$.
  
\subsection{$\TT(S)$ and $\MCG(S)$}

The \emph{Teichm\"uller space} of $S$, denoted $\TT(S)$, is the space of isotopy classes of marked hyperbolic (equivalently, conformal) structures on $\TT(S)$.  Teichm\"uller space admits several metrics, but we will be interested in the Teichm\"uller metric, denoted $d_T$,  which measures the quasiconformal distortion between to points in $\TT(S)$.\\

The \emph{mapping class group} of $S$, denoted $\MCG(S)$, is the group of orientation preserving homeomorphisms of $S$ modulo those isotopic to the identity.  The mapping class group acts naturally by isometries on $(\TT(S), d_T)$ by changing the marking.\\

Recently, both $\TT(S)$ and $\MCG(S)$ have been intensely studied using combinatorial machinery built from curves.  Much of rest of this section is devoted to an overview of this machinery.

\subsection{Thin parts of $\TT(S)$ are (coarsely) products}\label{prod section}

In this subsection, we recall a theorem of Minsky which characterizes the thin regions of Teichm\"uller space as product spaces.  The rest of the paper involves analyzing a coarse model of $\TT(S)$ which encodes the geometry of these product regions.\\

Let $\Gamma$ be a collection of disjoint simple closed curves on $S$.  For any $\epsilon>0$, let $\mathrm{Thin}_{\epsilon}(S, \Gamma) = \{X \in \TT(S)\hspace{.05in} | \hspace{.05in} l_X(\gamma)  < \epsilon, \forall \hspace{.05in} \gamma \in \Gamma\}$ and set

\[T_{\Gamma} = \prod_{\gamma \in \Gamma} \HH_{\gamma} \times \TT(S\setminus \Gamma)\]
where $\HH_{\gamma}$ is a copy of the upper half plane and if $S \setminus \Gamma = \coprod Y$ is disconnected, then we take $\TT(S\setminus \Gamma) = \prod \TT(Y)$.  We put the Teichm\"uller metric on each component of $\TT(S\setminus \Gamma)$ and consider $T_{\Gamma}$ with the sup metric.  Minsky proved:

\begin{theorem}[Minsky \cite{Min96}]\label{r:min prod}
There are $\epsilon, C>0$ such that Fenchel-Nielsen coordinates give rise to a natural homeomorphism $\Phi: \TT(S) \rightarrow T_{\Gamma}$ which restricts to a $(1,C)$-quasiisometric embedding on $\mathrm{Thin}_{\epsilon}(S,\Gamma)$.
\end{theorem}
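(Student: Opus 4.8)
The plan is to write $\Phi$ explicitly in Fenchel--Nielsen coordinates, check it is a homeomorphism by the classical global--coordinate theory, and then prove a two--sided distance comparison on $\mathrm{Thin}_\epsilon(S,\Gamma)$ by combining Kerckhoff's extremal--length formula for $d_T$ with a short list of extremal--length estimates valid near the thin part. Fix a pants decomposition $P$ of $S$ with $\Gamma \subseteq P$, and for $X \in \TT(S)$, $\gamma \in P$ let $(\ell_X(\gamma), \tau_X(\gamma))$ be the Fenchel--Nielsen length and twist of $\gamma$. Define $\Phi$ by sending, for each $\gamma \in \Gamma$, the pair $(\ell_X(\gamma),\tau_X(\gamma))$ to the point of $\HH_\gamma$ with imaginary part $1/\ell_X(\gamma)$ and real part the suitably normalized twist $\tau_X(\gamma)$, and sending the remaining data $\{(\ell_X(\gamma),\tau_X(\gamma))\}_{\gamma \in P \setminus \Gamma}$ to the corresponding point of $\TT(S\setminus\Gamma)$ --- equivalently, the conformal structure obtained by cutting $X$ along $\Gamma$, since $P\setminus\Gamma$ is a pants decomposition of $S\setminus\Gamma$ and these are its Fenchel--Nielsen coordinates. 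That $\Phi$ is a homeomorphism is the standard fact that Fenchel--Nielsen parameters are global coordinates on $\TT(S)$. The remaining content is that $d_T(X,Y) \asymp d_{T_\Gamma}(\Phi(X),\Phi(Y))$, with additive error, whenever $\ell_X(\gamma),\ell_Y(\gamma) < \epsilon$ for all $\gamma \in \Gamma$, with $\epsilon$ chosen small enough (via the Collar Lemma and Maskit's comparison of extremal and hyperbolic length) that the curves of $\Gamma$ have pairwise disjoint deep collars and extremal length comparable to hyperbolic length there.

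For the lower bound I would show that each coordinate projection is coarsely $1$--Lipschitz, using Kerckhoff's identity $d_T(X,Y) = \tfrac12 \log \sup_\alpha \Ext_X(\alpha)/\Ext_Y(\alpha)$, which yields $|\log \Ext_X(\alpha) - \log \Ext_Y(\alpha)| \le 2\,d_T(X,Y)$ for every simple closed curve $\alpha$. Taking $\alpha = \gamma \in \Gamma$ and using $\Ext_\bullet(\gamma) \asymp \ell_\bullet(\gamma)$ on the thin part controls the vertical displacement $|\log(\ell_Y(\gamma)/\ell_X(\gamma))|$ in $\HH_\gamma$; taking $\alpha = \beta$ a curve crossing $\gamma$ once, together with the estimate $\Ext_\bullet(\beta) \asymp (1 + \tau_\bullet(\gamma)^2)/\ell_\bullet(\gamma)$ coming from the collar geometry, controls the horizontal displacement relative to the height. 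Combining these gives $d_{\HH_\gamma}(\pi_\gamma(X),\pi_\gamma(Y)) \prec d_T(X,Y)$. For the projection to $\TT(S\setminus\Gamma)$, I would observe that when $\Gamma$ is $\epsilon$--short, any curve supported in $S\setminus\Gamma$ has extremal length in $S$ comparable to its extremal length in the cut surface, so the Kerckhoff supremum computed in $S\setminus\Gamma$ is dominated by that computed in $S$, whence this projection too is coarsely $1$--Lipschitz.

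For the upper bound, given $X,Y$ in the thin part and $D := d_{T_\Gamma}(\Phi(X),\Phi(Y))$, I would build a quasiconformal map $X \to Y$ of dilatation $\prec e^{2D}$, so that $d_T(X,Y) \le D + C$. Decompose each of $X,Y$, coarsely conformally, into the thick subsurfaces filling the components of $S\setminus\Gamma$ together with the standard annular (Margulis tube) neighborhoods of the curves of $\Gamma$; on each thick piece use a near--extremal map realizing the Teichm\"uller distance between the corresponding points of $\TT(S\setminus\Gamma)$, normalized to be standard near the boundary, and on each annulus use an explicit affine model map realizing the prescribed change of modulus and twist, with dilatation governed by $d_{\HH_\gamma}(\pi_\gamma(X),\pi_\gamma(Y))$; then glue along the boundary curves, the normalization making the pieces agree and the twisting being absorbed into the annular maps. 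Each piece then has dilatation $\prec e^{2D}$, hence so does the glued map.

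The main obstacle is exactly this gluing step: one must arrange the thick--piece and annular maps to match along the common boundary curves without inflating the dilatation past the maximum over the factors, and it is precisely here that the extremal--length estimate relating the twist parameter to the extremal length of a transverse curve --- the reason the annular factor is the hyperbolic plane $\HH_\gamma$ rather than merely $\RR_{>0}$ --- is essential. The remaining inputs (the behavior of $\Ext_\bullet(\gamma)$ and $\Ext_\bullet(\beta)$ on the thin part, and the choice of $\epsilon$) are routine consequences of the Collar Lemma and the relation between extremal length and moduli of embedded annuli.
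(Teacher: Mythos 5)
The paper does not prove this statement; it quotes it from Minsky \cite{Min96}, so the comparison must be with Minsky's own argument. Your lower bound is essentially Minsky's route: Kerckhoff's formula plus extremal-length estimates for the short curves and for curves crossing them, giving coordinate-wise control with additive error. That half is fine in outline.

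The genuine gap is the upper bound. You propose to build a quasiconformal map $X\to Y$ by taking near-extremal maps on the thick pieces, affine model maps on the Margulis annuli, and gluing, and you yourself flag the gluing as the main obstacle --- but flagging it is not resolving it, and it is exactly where the work lies. A Teichm\"uller (near-extremal) map between the cut pieces is in no sense ``standard near the boundary'': there is no canonical normalization of its boundary behavior, and forcing two quasiconformal maps on adjacent pieces to agree along a common boundary curve generically costs extra dilatation. For the $(1,C)$ conclusion you need that cost to be a bounded multiplicative factor, uniformly over the thin part, and establishing that requires a genuine lemma (e.g.\ that a quasiconformal map can be modified inside an annulus of large modulus to achieve prescribed boundary values, absorbing the twist, at bounded multiplicative cost); nothing in your sketch supplies it. Minsky's actual proof sidesteps the construction of any map entirely: the main theorem of \cite{Min96} is an extremal-length estimate of the form $\Ext_X(\alpha)\asymp\max$ of the extremal lengths of (the pieces of) $\alpha$ in the factors of $T_\Gamma$, valid for $X\in\mathrm{Thin}_{\epsilon}(S,\Gamma)$, and then \emph{both} inequalities of the sup-metric comparison, with additive error only, follow directly from Kerckhoff's formula applied in $S$ and in each factor. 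So to complete your argument you must either prove that product estimate for extremal length (which is the real content of the theorem and is absent from your proposal) or carry out the gluing lemma in detail; as written, the upper bound is asserted rather than proved.
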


One can use Minsky's theorem to give a lower bound on the geometric rank of $\TT(S)$ of $r(S)-1$ as follows: Let $P \subset \CC(S)$ be a maximal simplex and consider its corresponding thin part in $\TT(S)$.  Take a geodesic ray in each horodisk component of the corresponding product region.  The product of these rays gives an $r(S)$-dimensional quasiorthant in $\TT(S)$, whose boundary is a quasiflat of dimension $r(S)-1$.

\subsection{Curves, subsurfaces, and markings}

The \emph{curve complex} of $S$, denoted $\CC(S)$, is a graph whose vertices are curves with edges usually for disjointness.  When $S=S_{1,1},$ or $S_{0,4}$, there are edges for minimal intersection and $\CC(S)$ is a Farey graph.  In the case of $Y_{\alpha}$, the annular collar of some curve $\alpha \in \CC(S)$, we require a further refinement: Let $\widetilde{Y}$ be the cover of $S$ corresponding to $Y_{\alpha}$; $\widetilde{Y}$ has a natural compactification to a closed annulus, $\widetilde{Y}'$, and we let the vertices of $\CC(Y_{\alpha})$ be the paths which connect the boundary components of $\widetilde{Y}'$, relative to homotopies which fix the endpoints; edges are again for disjointness.  In this case, $\CC(Y_{\alpha})= \CC(\alpha)$ is quasiisometric to $\ZZ$.\\

The following theorem is the core of the coarse approach:

\begin{theorem}[\cite{MM99}]
$\CC(S)$ is an infinite diameter, Gromov hyperbolic space.
\end{theorem}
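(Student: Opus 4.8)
Infinite diameter is the easier assertion, so I will be brief about it. Fix a pseudo-Anosov mapping class $\phi$ with stable lamination $\mathcal{F}^{+}$, which is filling and uniquely ergodic, and fix a curve $v$. For any radius $D$, the closure in $\mathcal{PML}(S)$ of the $D$--ball $N_{D}(v)\subset\CC(S)$ does not contain $[\mathcal{F}^{+}]$: this is an induction on $D$, the base case being that $\mathcal{F}^{+}$ is not a simple closed curve, and the inductive step using compactness of $\mathcal{PML}(S)$, continuity of geometric intersection number, and the fact that a measured lamination with zero intersection with the uniquely ergodic filling lamination $\mathcal{F}^{+}$ must equal it. On the other hand $\phi^{n}v\to[\mathcal{F}^{+}]$ in $\mathcal{PML}(S)$ by north--south dynamics, so $\phi^{n}v$ eventually leaves every $N_{D}(v)$; hence $d_{\CC(S)}(v,\phi^{n}v)\to\infty$. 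When $S$ is one of the sporadic surfaces $\CC(S)$ is a Farey graph and both assertions are classical, so I assume below that $S$ is non-sporadic.

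For Gromov hyperbolicity I would follow Masur--Minsky and produce a uniformly well-behaved family of paths in $\CC(S)$. Define a coarse ``shadow'' map $\pi_{\TT}\colon\TT(S)\to\CC(S)$ sending a hyperbolic structure $X$ to a shortest curve on $X$; since curves of uniformly bounded length on a fixed $X$ pairwise intersect a bounded number of times, and length functions vary boundedly over unit distance in $(\TT(S),d_{T})$, the map $\pi_{\TT}$ is coarsely Lipschitz, and it is plainly coarsely onto. Given $\alpha,\beta\in\CC(S)$, realize $\alpha$ and $\beta$ --- or, if they fail to fill, an appropriate pair of filling multicurves supported near them --- as the vertical and horizontal foliations of a quadratic differential, let $\mathcal{G}$ be the associated Teichm\"uller geodesic, and set $\gamma_{\alpha\beta}=\pi_{\TT}(\mathcal{G})$, a path running from a bounded neighborhood of $\alpha$ to one of $\beta$. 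The plan is to verify the hypotheses of the Masur--Minsky hyperbolicity criterion \cite{MM99} for this family: (i) each $\gamma_{\alpha\beta}$ is a coarsely Lipschitz reparametrization of an honest path in $\CC(S)$, i.e.\ its consecutive vertices are uniformly close; and (ii) there is a coarsely Lipschitz coarse projection of $\CC(S)$ onto $\gamma_{\alpha\beta}$ that is coarsely the identity on $\gamma_{\alpha\beta}$ and sends any metric ball disjoint from $\gamma_{\alpha\beta}$ to a set of uniformly bounded diameter. Granting (i) and (ii), the criterion gives simultaneously that $\CC(S)$ is $\delta$--hyperbolic for a uniform $\delta$ and that the $\gamma_{\alpha\beta}$ are uniform unparametrized quasigeodesics.

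Step (i) is the softer input: along a Teichm\"uller geodesic the set of times at which a fixed curve $w$ has hyperbolic length below Minsky's constant is coarsely a subinterval, two curves simultaneously short on some $X_{t}$ are disjoint by the collar lemma and hence at $\CC(S)$--distance at most one, and a short estimate with the Bers constant then bounds the $\CC(S)$--displacement of the systole over a unit of $d_{T}$--time. The heart of the matter --- and the step I expect to be the main obstacle --- is the contraction property (ii). One would define the projection of a curve $w$ to $\mathcal{G}$ as the shadow of the set of times at which $w$ is ``balanced'' on $\mathcal{G}$, equivalently where $w$ is short on $X_{t}$; one must then show this set is a subinterval of uniformly bounded length, that it behaves like a nearest--point projection, and, crucially, that if $w$ lies far from $\gamma_{\alpha\beta}$ in $\CC(S)$ then a whole $\CC(S)$--ball about $w$ has bounded projection. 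This last estimate is exactly where the hyperbolic-like geometry of $(\TT(S),d_{T})$ enters in an essential way: it rests on quantitative control of how Teichm\"uller geodesics diverge and on the product structure of thin parts (Minsky's Theorem~\ref{r:min prod}), and it constitutes the technical core of \cite{MM99}.

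Finally, I would note that more self-contained routes to the hyperbolicity of $\CC(S)$ are now available --- via tight geodesics and intersection-number counting, or via the purely combinatorial ``unicorn/bicorn path'' argument, the latter even yielding a hyperbolicity constant independent of $S$ and using no Teichm\"uller theory at all. Any of these can replace steps (i)--(ii); I have outlined the Masur--Minsky approach because it stays closest to, and recycles, the Teichm\"uller-geometric machinery (in particular Theorem~\ref{r:min prod}) that the rest of this paper relies on.
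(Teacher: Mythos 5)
This theorem is not proved in the paper at all: it is imported verbatim from \cite{MM99}, so there is no in-paper argument to compare you against. Judged on its own terms, your write-up is an accurate reconstruction of the original route rather than a complete proof. The infinite-diameter part is fine and essentially complete as a sketch: the induction on $D$ (limits of curves at distance $\leq D$ from $v$ in $\mathcal{PML}(S)$ cannot be a filling, uniquely ergodic lamination, using continuity of $i(\cdot,\cdot)$ after suitable normalization and unique ergodicity to force $[\mu]=[\mathcal{F}^{+}]$ in the inductive step), combined with north--south dynamics of a pseudo-Anosov, is the standard Kobayashi/Luo argument and is correct, with the sporadic Farey-graph cases rightly set aside.

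For hyperbolicity, however, what you give is a plan, not a proof: you define the systole shadow of a Teichm\"uller geodesic and state the two hypotheses of the Masur--Minsky contraction criterion, but the contraction property (ii) --- that the balanced-time projection is coarsely well defined, coarsely idempotent, and collapses balls far from $\gamma_{\alpha\beta}$ to sets of bounded diameter --- is exactly the technical content of \cite{MM99}, and you explicitly defer it (``the main obstacle,'' ``the technical core''). So as a standalone argument there is a genuine gap at that step; nothing you wrote is wrong, but the theorem is not established without it. Your closing remark is also accurate: the unicorn-path argument of Hensel--Przytycki--Webb (or Bowditch's intersection-number approach) gives a shorter, uniform, Teichm\"uller-free proof, and citing one of those, or simply citing \cite{MM99} as the paper does, would be the appropriate way to discharge this statement here.
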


Given a simplex $\gamma \subset \CC(S)$, if the complement $S \setminus \gamma$ is disconnected, then we call the components \emph{subsurfaces}.  For any subsurface $Y \subset S$, we denote the disjoint collection of curves which bound it by $\partial Y \subset \CC(S)$.\\

In \cite{MM00}, Masur-Minsky built a quasiisometry model for $\MCG(S)$ called the \emph{marking complex}, $\MM(S)$.  A (complete) \emph{marking} $\mu \in \MM(S)$ on $S$ is collection of \emph{transverse pairs} $(\alpha, t_{\alpha})$, where the $\alpha$ is a pants decomposition called the \emph{base of} $\mu$, denoted by $\mathrm{base}(\mu)$, and each $t_{\alpha}$, called the \emph{transversal} to $\alpha$, is a curve intersecting $\alpha$ such that the subsurface, $Y$, filled by $\alpha \cup t_{\alpha}$ satisfies $r(Y)=1$ and $d_Y(\alpha, t_{\alpha}) = 1$ (see Section 2.4 of \cite{MM00}).  In addition, all our markings are \emph{clean}: the only base curve which each transversal intersects is its paired base curve.\\

Two markings are connected by an edge in $\MM(S)$ if they differ by one of two elementary moves: 
\begin{enumerate}
\item[(Twist move)] A Dehn twist or half twist around a base curve
\item[(Flip move)] Switch the roles of a base curve and its transverse curve$(\alpha, t_{\alpha}) \mapsto (t_{\alpha}, \alpha)$ (along with some other coarsely inconsequential changes to make the resulting marking clean).
\end{enumerate}

Since $\MM(S)$ has finite valence and $\MCG(S)$ acts properly and cocompactly on it, we have :

\begin{lemma}[\cite{MM00}]\label{marking qi}
The marking complex $\MM(S)$ with the graph metric is quasiisometric to $\MCG(S)$ with any word metric.
\end{lemma}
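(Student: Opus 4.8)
The plan is to deduce this from the \v{S}varc--Milnor lemma, so essentially all of the work lies in verifying its hypotheses for the natural action of $\MCG(S)$ on $\MM(S)$. First, $\MCG(S)$ acts on the vertex set of $\MM(S)$ by $f\cdot\mu = f(\mu)$, applying a representative homeomorphism to each transverse pair. Since the base of a marking, its transversals, the cleanness condition, and the two elementary moves (twist and flip) are all defined purely in terms of the topology of the pair $(S,\mu)$, this action permutes the edges of $\MM(S)$, hence acts by isometries of the graph metric. Moreover $\MM(S)$ is connected --- any two pants decompositions are joined by a sequence of elementary moves, after which the transversals can be corrected one pair at a time --- so with its graph metric it is a geodesic metric space.

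Next I would record the two finiteness properties. For \emph{local finiteness}: a given marking admits only finitely many twist moves (a full or half twist along each of the finitely many base curves) and finitely many flip moves (one for each transverse pair, the cleaning adjustments contributing only finitely many further choices), so every vertex of $\MM(S)$ has finite valence; a connected locally finite graph is a proper geodesic space. For \emph{cocompactness}: by the change-of-coordinates principle there are only finitely many topological types of complete clean markings on the finite-type surface $S$, so $\MCG(S)$ acts with finitely many orbits of vertices, and together with local finiteness this yields a compact (indeed finite) fundamental domain for the action.

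The remaining, and most delicate, point is that the action is \emph{metrically proper}; for this it suffices to show that the stabilizer of a vertex $\mu$ is finite. The key observation is that $\base(\mu)$ together with the transversals $t_\alpha$ forms a system of curves that fills $S$: any mapping class fixing the isotopy class of each of these curves, together with the pairing data, lies in the pointwise stabilizer of a filling curve system and is therefore of finite order by the Alexander method. Since $\MM(S)$ is locally finite, finiteness of vertex stabilizers upgrades to properness of the action --- only finitely many $f\in\MCG(S)$ can carry a fixed vertex into a fixed metric ball. Having established that $\MCG(S)$ acts by isometries, properly and cocompactly, on the proper geodesic space $\MM(S)$, the \v{S}varc--Milnor lemma gives that $\MCG(S)$ is finitely generated and, with respect to any word metric, quasiisometric to $\MM(S)$ (using also that any two word metrics on a finitely generated group are quasiisometric to one another). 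I expect the properness step to be the main obstacle: it is exactly the point where one must use that a complete marking is a genuinely rigid object, namely that it fills $S$ and hence has finite stabilizer, whereas the rest of the argument is soft bookkeeping with the combinatorics of the elementary moves.
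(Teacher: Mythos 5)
Your proposal is correct and follows essentially the same route as the paper, which justifies the lemma in one line by noting that $\MM(S)$ has finite valence and $\MCG(S)$ acts properly and cocompactly on it (i.e.\ the \v{S}varc--Milnor argument from \cite{MM00}); your write-up simply fills in the standard verifications, including the key point that filling markings have finite stabilizers.
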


We also need the notion of a \emph{subsurface projection}, which records the combinatorics of two curves from the perspective of a subsurface.\\

Let $\alpha \in \CC(S)$ be any simplex and let $Y \subset S$ be any subsurface with $r(Y)\neq 0$.  The \emph{subsurface projection} of $\alpha$ to $Y$, denoted $\pi_Y(\alpha)\subset \CC(Y)$, is the completion of $\alpha \cap Y$ along the boundary of a regular neighborhood of $\alpha \cap Y$ and $\partial Y$ to curves in $Y$.  If $Y = Y_{\gamma}$ is an annulus with core $\gamma$, then $\pi_{Y_{\gamma}}(\alpha) = \pi_{\gamma}(\alpha)$ is the set of lifts of $\gamma$ to the annular cover $\widetilde{Y}_{\gamma}$ of $S$ which connect the two boundaries of the compactification of $\widetilde{Y}_{\gamma}$.  In both cases $\pi_Y(\alpha) \subset \CC(Y)$ is a simplex, unless $\alpha \cap Y = \emptyset$ and then $\pi_{Y}(\alpha) = \emptyset$.\\

For $\mu \in \MM(S)$ and $Y$ nonannular, we set $\pi_Y(\mu) = \pi_Y(\mathrm{base}(\mu))$.  If $Y = Y_{\alpha}$ is an annulus with core $\alpha \in \mathrm{base}(\mu)$ and transversal $t_{\alpha}$, then $\pi_{\alpha}(\mu) = t_{\alpha}$.  See Section 2 of \cite{MM00} for more details.\\

When measuring the distance between the projection of two curves or markings to a subsurface, we typically write $d_Y(\pi_Y(\mu_1), \pi_Y(\mu_2)) = d_Y(\mu_1,\mu_2)$.  We also have:

For $\mu \in \MM(S)$ and subsurface $Y \subset S$, we build the \emph{projection of $\mu$ to a marking on $\MM(Y)$}, $\pi_{\MM(Y)}(\mu)$, by induction as follows.  Choose a curve $\gamma_1 \in \pi_Y(\mu)$ and build a pants decomposition on $Y$ by choosing $\gamma_i \in \pi_{Y \setminus \bigcup_{j=1}^{i-1} \gamma_j} (\mu)$.  Using this pants decomposition as its base, build a marking on $Y$ by choosing transverse pairs $(\gamma_i, \pi_{\gamma_i}(\mu))$.  Define $\pi_{\MM(Y)}(\mu) \subset \MM(Y)$ to be the collection of all markings resulting from varying the choices of the $\gamma_i$.\\

\cite{MM00}[Lemma 2.4] and  \cite{Beh06}[Lemma 6.1] show that this projection is coarsely well-defined.  We remark that if $\partial Y \subset \mathrm{base}(\mu)$, then $\pi_{\MM(Y)}(\mu)$ is a unique point in $\MM(Y)$, since every curve in $\mathrm{base}(\mu)$ is either contained in or disjoint from $Y$.  In fact, one can show:

\begin{lemma}[Lipschitz projection; \cite{MM00}, \cite{Dur14}] \label{lip proj}
Let $X \subset Y \subset S$ be subsurfaces.  For any augmented marking $\tmu \in \AM(Y)$, if $\pi_X(\tmu) \neq \emptyset$, then $\text{diam}_{\AM(X)}(\tmu) \asymp 1$.
\end{lemma}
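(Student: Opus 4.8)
The plan is to fix arbitrary $\tmu_1', \tmu_2' \in \pi_{\AM(X)}(\tmu)$ and bound $d_{\AM(X)}(\tmu_1', \tmu_2')$ by a constant depending only on $S$. I would work entirely inside $Y$, so below ``subsurface'' means subsurface of $Y$ and $\base(\tmu)$ is a pants decomposition of $Y$. The tool is the coarse distance formula for $\AM(X)$ from \cite{Dur13}: up to uniform multiplicative and additive error, $d_{\AM(X)}(\nu_1,\nu_2)$ is a sum, over subsurfaces $W \subseteq X$, of the subsurface-projection distances $d_W(\nu_1,\nu_2)$ truncated below a fixed threshold, together with contributions from the horoball (height-and-twist) coordinates of the short base curves. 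So it suffices to check that any two elements of $\pi_{\AM(X)}(\tmu)$ (i) project to within bounded distance in $\CC(W)$ for every $W \subseteq X$, and (ii) carry the same horoball coordinates up to bounded error. Recall that an element of $\pi_{\AM(X)}(\tmu)$ is produced by the inductive recipe of the excerpt, in its augmented form \cite{Dur14}: choose $\gamma_1 \in \pi_X(\tmu)$, then $\gamma_i \in \pi_{X \setminus \bigcup_{j<i}\gamma_j}(\tmu)$, take $\{\gamma_i\}$ as the base of the output, equip each $\gamma_i$ with a transversal drawn from $\pi_{\gamma_i}(\tmu)$, and --- the new feature --- with a height coordinate read off from $\tmu$, namely the $\tmu$-height of $\gamma_i$ when $\gamma_i$ is isotopic to a base curve of $\tmu$, and height $\asymp 1$ otherwise.

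For (i): since $\base(\tmu)$ is a pants decomposition it is a simplex, so $\pi_W(\tmu) = \pi_W(\base(\tmu))$ has uniformly bounded diameter in $\CC(W)$ for every subsurface $W$. Consequently every curve chosen in the recipe lies within bounded $\CC$-distance of the relevant projection of $\base(\tmu)$, and the argument of \cite{MM00}[Lemma 2.4] and \cite{Beh06}[Lemma 6.1] applies to the underlying marking data: $d_W(\tmu_1',\tmu_2') \prec 1$ for every nonannular $W \subseteq X$, while for an annulus $W = Y_{\gamma_i}$ the two transversals both come from the bounded set $\pi_{\gamma_i}(\tmu)$, so $d_{\gamma_i}(\tmu_1',\tmu_2') \prec 1$ as well.

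For (ii), the genuinely new point, I claim that every base curve of $\tmu$ that is essential and non-peripheral in $X$ is forced --- up to isotopy --- into $\base(\tmu_i')$ for every valid output $\tmu_i'$, carrying its $\tmu$-height. Indeed, if $\alpha \in \base(\tmu)$ is such a curve then $\alpha$ is disjoint from $\partial X$ and from every other curve of $\base(\tmu)$; at each stage the chosen $\gamma_i$ is a subsurface projection of a curve of $\base(\tmu)$ into a subsurface still containing $\alpha$ (or lies in a complementary piece disjoint from $\alpha$), hence is again disjoint from $\alpha$, so $\alpha$ survives every step. Since the recipe must ultimately cut the complementary piece containing $\alpha$ into pairs of pants using only curves disjoint from $\alpha$, the curve $\alpha$ itself must appear in the output pants decomposition. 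Thus $\base(\tmu_i')$ consists of the curves of $\base(\tmu)$ lying in $X$, with their $\tmu$-heights, together with auxiliary curves of height $\asymp 1$; the first part --- and its heights --- is identical across all outputs, while the auxiliary curves have bounded height and so contribute nothing beyond a uniform constant to the horoball part of the distance formula. Together with the bounded twist coordinates from (i), this gives (ii).

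Feeding (i) and (ii) into the distance formula for $\AM(X)$ bounds $d_{\AM(X)}(\tmu_1',\tmu_2')$ uniformly, so $\text{diam}_{\AM(X)}(\tmu) \asymp 1$; the hypothesis $\pi_X(\tmu) \neq \emptyset$ enters only to start the recipe. The main obstacle is (ii): the classical marking projection of \cite{MM00} and \cite{Beh06} says nothing about heights, and one must verify that the height attached to a pants curve of $X$ is insensitive to the choices in the recipe --- equivalently, that the short curves of $\tmu$ lying in $X$ behave rigidly under projection --- which is precisely the feature that forces one to invoke \cite{Dur14} rather than \cite{MM00} alone.
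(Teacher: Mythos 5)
The paper never proves this lemma --- it is imported by citation from \cite{MM00} and \cite{Dur14} --- so there is no internal argument to compare against; judged on its own, your proof is correct and is the natural one. The underlying marking data is exactly the classical marking projection, so item (i) is legitimately discharged by \cite{MM00}[Lemma 2.4] and \cite{Beh06}[Lemma 6.1] together with the bounded diameter of $\pi_W(\base(\tmu))$ for a simplex; and you correctly isolate the genuinely augmented point in item (ii): any $\alpha \in \base(\tmu)$ that is essential and non-peripheral in $X$ is disjoint from $\partial X$, from every other base curve, and hence from every curve produced at every stage of the recipe (the surgered projections live in a neighborhood of $\base(\tmu) \cup \partial X \cup \{\gamma_j\}$, all disjoint from $\alpha$), so $\alpha$ must appear, up to isotopy, in the base of every output with coordinate $D_{\alpha}(\tmu)$ --- which is precisely what prevents an unbounded horoball term. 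Two small touch-ups: by the paper's definition the auxiliary curves receive coordinate $D=0$ rather than ``height $\asymp 1$'' (this only helps), and it is worth saying explicitly that the final bound comes from choosing the threshold in the distance formula (Theorem \ref{distance}, applied to $\AM(X)$) above the uniform constants from (i) and (ii), so that every term in the sum vanishes and the diameter is bounded by the additive constant alone.
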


\subsection{The augmented marking complex}

In this subsection, we review our main construction from \cite{Dur13}, as further refined in \cite{Dur14}.\\

The \emph{augmented marking complex} of $S$, $\AM(S)$, is a graph whose vertices are augmented markings.  An \emph{augmented marking} $\tmu$ is a collection of data $\left(\mu, \{D_{\alpha}\}_{\alpha \in \base(\mu)}\right)$, where $\mu \in \MM(S)$ and each $D_{\alpha} \in \ZZ_{\geq 0}$ is a \emph{coarse length coordinate} which specifies coarsely how short each base curve is.\\ 

Two augmented markings $\tmu_1, \tmu_2 \in \AM(S)$ are connected by an edge in $\AM(S)$ if they differ by one of the following types of elementary moves which extend those of $\MM(S)$:

\begin{itemize}
\item[(Flip moves)] If  $\mu_1, \mu_2 \in \MM(S)$ differ by a flip move on a transverse pairing $(\alpha, t_{\alpha}) \mapsto (t_{\alpha}, \alpha)$, and if $\tilde{\mu}_1, \tilde{\mu}_2$ have the same base curves and length data, with $D_{\alpha}(\tilde{\mu}_1) = D_{\alpha}(\tilde{\mu}_2) = 0$ for each $\alpha \in \mathrm{base}(\tilde{\mu}_1) = \mathrm{base}(\tilde{\mu}_2)$.
\item[(Twist moves)] If $\alpha \in \mathrm{base}(\mu_1) = \mathrm{base}(\mu_2)$, $D_{\alpha}(\tilde{\mu}_1) = D_{\alpha}(\tilde{\mu}_2) = k > 0$, and $\tilde{\mu}_1= T^n_{\alpha} \tilde{\mu}_2$ with $0 < n < e^k$, where $T_{\alpha}$ is the positive Dehn (half)twist around $\alpha$.
\item[(Vertical moves)] If $\mu_1 = \mu_2$ and there is an $\alpha \in \mathrm{base}(\mu_1) = \mathrm{base}(\mu_2)$ such that $D_{\alpha}(\tilde{\mu}_1) = D_{\alpha}(\tilde{\mu}_2) \pm 1$ and $D_{\beta}(\tilde{\mu}_1) = D_{\beta}(\tilde{\mu}_2)$ for all $\beta \in \mathrm{base}(\mu_1) \setminus \alpha = \mathrm{base}(\mu_2) \setminus \alpha$.
\end{itemize}

Note that its not possible to perform a flip move at $\alpha \in \base(\tmu)$ if $D_{\alpha}(\tmu)>0$.  For any $\tmu \in \AM(S)$ and $\alpha \notin \base(\tmu)$, we define $D_{\alpha}(\tmu) =0$.  Note that $\{\tmu \hspace{.05in} | \hspace{.05in} D_{\alpha}(\tmu)=0, \forall \alpha \in \CC(S)\}$ is a metrically distorted copy of $\MM(S)$ at the base of $\AM(S)$.\\

The following was the main theorem of \cite{Dur13}:

\begin{theorem} \label{r:aug qi}
The augmented marking complex, $\AM(S)$, is $\MCG(S)$-equivariantly quasiisometric to $\TT(S)$ with the Teichm\"uller metric.
\end{theorem}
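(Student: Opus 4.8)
The plan is to exhibit an explicit coarse map $\Phi\colon\AM(S)\to(\TT(S),d_T)$, show it is $\MCG(S)$-equivariant and coarsely surjective, and then verify that it is a quasiisometric embedding by comparing, index by index, a distance formula for the graph metric on $\AM(S)$ with Rafi's combinatorial formula for $d_T$ \cite{Raf07}. Throughout, Minsky's product region theorem (Theorem~\ref{r:min prod}) is the dictionary that translates the combinatorial length coordinates $D_\alpha$ into honest hyperbolic-length and twist data on marked hyperbolic structures.

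\emph{Constructing the map.} Given $\tmu=(\mu,\{D_\alpha\}_{\alpha\in\base(\mu)})$, I choose $\Phi(\tmu)=X\in\TT(S)$ with the following properties: for each $\alpha\in\base(\mu)$ one has $\ell_X(\alpha)\asymp e^{-D_\alpha}$ (so $\alpha$ is $\epsilon$-short exactly when $D_\alpha$ is large, and of length bounded away from $0$ when $D_\alpha=0$), the Fenchel--Nielsen twist of $X$ about $\alpha$ is read off from the transversal $t_\alpha$, and $\pi_Y(X)\asymp\pi_Y(\mu)$ for every essential subsurface $Y\subset S$. Such an $X$ exists and is coarsely unique: Theorem~\ref{r:min prod} identifies $\mathrm{Thin}_\epsilon(S,\base(\mu))$ with a product $\prod_\alpha\HH_\alpha\times\TT(S\setminus\base(\mu))$, and one places $X$ at the point whose $\HH_\alpha$-coordinate has imaginary part $\asymp e^{D_\alpha}$ and real part the twist of $t_\alpha$, and whose $\TT(S\setminus\base(\mu))$-coordinate realizes the induced marking $\pi_{\MM(Y)}(\mu)$ on each complementary piece $Y$ (coarsely well-defined since $\partial Y\subset\base(\mu)$). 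Because $\Phi$ is specified entirely by topological data --- curves, transversals, and integer coordinates --- it is automatically $\MCG(S)$-equivariant; and it is coarsely surjective because any $X\in\TT(S)$ carries a Bers marking together with length coordinates $D_\alpha\asymp-\log\ell_X(\alpha)$, and $\Phi$ sends this datum back within bounded distance of $X$, again by Theorem~\ref{r:min prod}.

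\emph{Proving $\Phi$ is a quasiisometric embedding.} Rafi's formula \cite{Raf07} asserts that for $X,Y\in\TT(S)$ with Bers markings $\mu_X,\mu_Y$,
\[
d_T(X,Y)\;\asymp\;\sum_{Z\text{ non-annular}}\big[d_Z(\mu_X,\mu_Y)\big]_K\;+\;\sum_{\alpha}\big[\log d_\alpha(\mu_X,\mu_Y)\big]_K\;+\;\max_{\alpha} d_{\HH_\alpha}(X,Y),
\]
where $[\,\cdot\,]_K$ is truncation below $K$ and $d_{\HH_\alpha}(X,Y)$ is the hyperbolic distance between the images of $X$ and $Y$ in the factor $\HH_\alpha$ of the product region, a quantity that simultaneously accounts for the two lengths $\ell_X(\alpha),\ell_Y(\alpha)$ and the relative twisting about $\alpha$. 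I will establish the parallel distance formula for the graph metric on $\AM(S)$: that $d_{\AM(S)}(\tmu_1,\tmu_2)$ is coarsely the same sum over the same index set, with $\mu_X,\mu_Y$ replaced by $\tmu_1,\tmu_2$ and with the $\alpha$-term $d_{\HH_\alpha}(X,Y)$ replaced by the graph distance in $\HH_\alpha$ between the pairs $(t_\alpha,D_\alpha)$ recorded by $\tmu_1$ and $\tmu_2$. Granting this, the two right-hand sides agree term by term: the non-annular and logarithmic-annular terms match because $\pi_Y(\Phi(\tmu))\asymp\pi_Y(\mu)$ for all $Y$, and the horoball terms match because, by design, a vertical move changes $D_\alpha$ by $1$ while at level $D_\alpha=k$ one may perform up to $e^k$ twist moves --- exactly the move-set whose word metric on $\{(t,k)\}$ is quasiisometric to the hyperbolic metric on $\HH_\alpha$ under the identification $\mathrm{Im}\asymp e^{k}$. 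Hence $d_{\AM(S)}(\tmu_1,\tmu_2)\asymp d_T(\Phi(\tmu_1),\Phi(\tmu_2))$, and with coarse surjectivity this gives the theorem.

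\emph{The main obstacle.} The real work is the $\AM(S)$-distance formula, which as usual splits into a lower and an upper bound. For the lower bound one checks that each projection $\pi_Y$, each $\log d_\alpha$, and each $\HH_\alpha$-coordinate is coarsely Lipschitz along edges of $\AM(S)$ (using Lemma~\ref{lip proj} and direct inspection of the flip, twist, and vertical moves), and then applies a Behrstock-type inequality to show that subsurfaces with large projection distance contribute additively rather than redundantly --- the Behrstock--Minsky \cite{BM08} strategy transplanted to $\AM(S)$. The upper bound, constructing an efficient path, is the delicate part: one follows a Masur--Minsky hierarchy \cite{MM00} between $\base(\tmu_1)\cup\base(\tmu_2)$ and resolves it into a path of elementary moves, but one must schedule the vertical and twist moves so that all twisting about a curve $\alpha$ is performed while $D_\alpha$ is small (i.e.\ "at the top of the horoball"), matching the way a Teichm\"uller geodesic performs its Dehn twisting when $\alpha$ is near-shortest. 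Showing that this scheduling is possible, that the resulting path is no longer than the claimed length, and that one genuinely gains nothing by descending deeper into a horoball than the endpoints require, is the crux --- and is precisely where the combinatorial constraints built into the moves of $\AM(S)$ are used in an essential way.
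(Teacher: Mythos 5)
The paper does not actually prove this statement: Theorem \ref{r:aug qi} is imported wholesale from \cite{Dur13}, so there is no in-paper argument to compare against. Measured against the proof in that reference, your strategy is essentially the right one and essentially the same one: realize an augmented marking as a point of a Minsky product region via Theorem \ref{r:min prod} (with $\ell_X(\alpha)\asymp e^{-D_\alpha}$ and twisting read from $t_\alpha$), check coarse equivariance and coarse surjectivity via Bers markings plus $D_\alpha\asymp\log(1/\ell_X(\alpha))$, and then match Rafi's formula \cite{Raf07} for $d_T$ term by term against a distance formula for the graph metric on $\AM(S)$, with the combinatorial horoball playing the role of the $\HH_\alpha$ factor (Lemma \ref{r:horoball qi}). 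Your small imprecisions are harmless at the coarse level: Rafi's annular contribution is a $\log$-term for curves not short at both endpoints and a $\max$ (not a sum) of $\HH_\alpha$-distances over curves short at both, but sum and max differ by at most a factor $r(S)$, and the horoball term absorbing the $\log d_\alpha$ term only double-counts boundedly.

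The genuine shortfall is that the step you yourself flag as the crux is the theorem's actual content, and you leave it as a description rather than a proof. The $\AM(S)$ distance formula (the second half of Theorem \ref{distance} here) is not available to you independently of \cite{Dur13} --- citing it would be circular, since in that paper the quasiisometry with $(\TT(S),d_T)$ is deduced from it. Its lower bound is indeed routine (coarse Lipschitzness of the projections along flip, twist, and vertical moves, plus a Behrstock/large-links overlap count as in \cite{MM00,BM08}), but the upper bound requires constructing the efficient paths --- the augmented hierarchy paths of Theorem \ref{ahp thm} --- by resolving a Masur--Minsky hierarchy into elementary moves and scheduling vertical and twist moves so that twisting about $\alpha$ is done at controlled depth $D_\alpha$, with quantitative bookkeeping showing the resulting path length is bounded by the right-hand side of the formula. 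Without carrying out that construction (or an equivalent surjectivity-plus-fellow-traveling argument), the claim $d_{\AM(S)}(\tmu_1,\tmu_2)\prec d_T(\Phi(\tmu_1),\Phi(\tmu_2))$ is unsupported, so the proposal as written is an accurate roadmap of the original proof rather than a complete proof.
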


Let $\tmu \in \AM(S)$ and $\alpha \in \base(\tmu)$.  There is a special subgraph of $\AM(S)$ consisting of augmented markings which differ from $\tmu$ by twist and vertical moves at $\alpha$ called a \emph{combinatorial horoball}.  These horoballs are the $\AM(S)$-analogues of annular curve complexes for $\MM(S)$.\\

More generally, a \emph{combinatorial horoball over $\ZZ$}, $\mathcal{H}(\ZZ)$, is the 1-complex with vertices $\mathcal{H}(\ZZ) = \ZZ \times (\{0\} \cup \NN)$ and edges as follows:
\begin{itemize}
\item If $x,y \in \ZZ$ and $m\in \{0\} \cup \NN$ such that $0<|x-y| \leq 2^m$, then $(x,m)$ and $(y,m)$ are connected by an edge in $\mathcal{H}(\ZZ)$.
\item If $x \in \ZZ$ and $m\in \{0\} \cup \NN$, then $(x,m)$ is connected to $(x,m+1)$ by an edge.
\end{itemize}

\begin{lemma}[\cite{Dur13}]\label{r:horoball qi}
The combinatorial horoball over $\ZZ$, $\HHH(\ZZ)$, is quasiisometric to the horodisk $\HH^2_{\geq 1}$.
\end{lemma}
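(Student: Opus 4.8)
The plan is to exhibit an explicit quasi-isometry $\Psi\colon\HHH(\ZZ)\to\HH^2_{\geq 1}$ together with a coarse inverse, and to verify the defining inequalities directly. Realize $\HH^2_{\geq 1}$ as the horoball $\{s+it:t\geq 1\}\subset\HH^2$ with the restricted hyperbolic metric $d_{\HH}$; this agrees with the induced path metric because horoballs are geodesically convex in $\HH^2$ (the imaginary part along a hyperbolic geodesic segment attains its minimum at an endpoint). Define $\Psi(x,m)=x+i\,2^{m}$ on vertices (note $m\geq 0$, so $2^m\geq 1$) and $\Phi(s+it)=\big(\lfloor s\rfloor,\lfloor\log_2 t\rfloor\big)$, so that $\Phi\circ\Psi=\id$. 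First I would record coarse surjectivity of $\Psi$: given $z=s+it$ with $t\geq 1$, setting $m=\lfloor\log_2 t\rfloor$ and $x$ a nearest integer to $s$, the triangle inequality gives $d_{\HH}(z,\Psi(x,m))\leq 2\,\mathrm{arcsinh}(1/4)+\log 2$, a universal bound; the same computation shows $\Psi\circ\Phi$ displaces points boundedly, so $\Psi,\Phi$ are coarse inverses.

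For the upper Lipschitz bound it suffices, concatenating along a geodesic edge-path and using the triangle inequality for $d_{\HH}$, to bound the $d_{\HH}$-diameter of the image of a single edge of $\HHH(\ZZ)$: a vertical edge contributes $d_{\HH}(x+i2^m,x+i2^{m+1})=\log 2$, and a horizontal edge $(x,m)\sim(y,m)$ with $0<|x-y|\leq 2^m$ contributes $d_{\HH}(x+i2^m,y+i2^m)=2\,\mathrm{arcsinh}(|x-y|/2^{m+1})\leq 2\,\mathrm{arcsinh}(1/2)$. Hence $d_{\HH}(\Psi p,\Psi q)\prec d_{\HHH}(p,q)$. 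The lower bound amounts to showing $\Phi$ is coarsely Lipschitz. Using convexity one may chop a geodesic from $z$ to $w$ into $\lceil d_{\HH}(z,w)\rceil$ sub-arcs of length $\leq 1$, so it is enough to treat $d_{\HH}(z,w)\leq 1$. From the identity $\cosh d_{\HH}(z,w)=1+(|s-s'|^2+|t-t'|^2)/(2tt')$ one reads off $|\log(t/t')|\leq d_{\HH}(z,w)\leq 1$ — so the levels $\lfloor\log_2 t\rfloor$ and $\lfloor\log_2 t'\rfloor$ differ by at most $2$ — and $|s-s'|\prec\sqrt{tt'}\prec 2^{\lfloor\log_2 t\rfloor}$. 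Thus $\Phi z$ and $\Phi w$ differ in level by at most $2$ and in the $\ZZ$-coordinate by a bounded multiple of $2^{\lfloor\log_2 t\rfloor}$, and one connects them in $\HHH(\ZZ)$ by at most two vertical moves followed by a uniformly bounded number of horizontal moves at the common level (a horizontal edge at level $m$ spans up to $2^m$ in the $\ZZ$-coordinate). This gives $d_{\HHH}(\Phi z,\Phi w)\prec 1$, whence $d_{\HHH}(p,q)=d_{\HHH}(\Phi\Psi p,\Phi\Psi q)\prec d_{\HH}(\Psi p,\Psi q)$, and $\Psi$ is a quasi-isometry.

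The chief obstacle is this last bound: one must keep the constants in the hyperbolic-distance estimates honest and check that a $d_{\HH}$-bounded move changes both the dyadic scale of the height and the horizontal position by amounts that can be absorbed by a uniformly bounded number of horoball moves. An essentially equivalent route avoids the coarse inverse: derive the coarse distance formula $d_{\HHH}((x,m),(y,n))\asymp |m-n|+\log_2\!\big(1+|x-y|/2^{\max(m,n)}\big)$ directly — the upper bound by an up-across-down path, the lower bound by observing that any edge-path attaining maximal level $M$ uses at least $2M-m-n$ vertical edges and at least $|x-y|/2^M$ horizontal ones, then optimizing over $M$ — and then note that the classical formula for $d_{\HH}$ on the horoball has exactly this coarse shape under the identification $(x,m)\leftrightarrow x+i2^m$, together with the $\tfrac12$-density of $\ZZ\times\{2^m\}$ in $\HH^2_{\geq 1}$ noted above.
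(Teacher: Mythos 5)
Your argument is correct. Note, though, that the paper itself gives no proof of this lemma: it is quoted from \cite{Dur13}, and the underlying comparison of a combinatorial horoball with a hyperbolic horodisk goes back to Groves--Manning \cite{GM08}, where it is established by essentially the computation you give. Your main route is sound and complete modulo routine constant-chasing: the convexity remark legitimately identifies the restricted and induced path metrics on $\HH^2_{\geq 1}$; the map $\Psi(x,m)=x+i2^m$ sends each edge of $\HHH(\ZZ)$ to a segment of uniformly bounded hyperbolic length (so $\Psi$ is coarsely Lipschitz); and the reverse bound via $\Phi(s+it)=(\lfloor s\rfloor,\lfloor\log_2 t\rfloor)$, unit subdivision of geodesics, and the identity $\cosh d_{\HH}=1+|z-w|^2/(2tt')$ correctly shows that a unit hyperbolic move changes the level by at most $2$ and the horizontal coordinate by $O(2^m)$, hence is absorbed by boundedly many horoball edges; together with $\Phi\circ\Psi=\id$ and coarse density of the image this yields the quasi-isometry. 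One small caveat on your alternative route: the formula $d_{\HHH}((x,m),(y,n))\asymp |m-n|+\log_2\bigl(1+|x-y|/2^{\max(m,n)}\bigr)$ is only correct up to a multiplicative factor (an up--across--down path costs roughly twice the logarithmic term when $m=n$), but since $\asymp$ here permits multiplicative constants this does not affect the conclusion. What your explicit-map approach buys over the bare citation is a self-contained, quantitative proof with concrete quasi-isometry constants, which is in the spirit of how \cite{GM08} treats the general combinatorial horoball.
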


Combinatorial horoballs in $\AM(S)$ stand in for the horodisks appearing in Minsky's product regions Theorem \ref{r:min prod} and are the $\AM(S)$-analogues of annular curve complexes from $\MM(S)$.  As such, we want to compare augmented markings on combinatorial horoballs.  Doing so requires some care, as annular curve complexes are only quasiisometric to $\ZZ$.  We recall some notation from [Subsection 4.2, \cite{Dur13}].\\

For each $\alpha \in \CC(S)$, choose an arc $\beta_{\alpha} \in \CC(\alpha)$.  For any other $\gamma \in \CC(\alpha)$, let $\gamma \cdot \beta_{\alpha}$ denote the algebraic intersection number.  The map $\phi_{\beta_{\alpha}}: \CC(\alpha) \rightarrow \ZZ$, given by $\phi_{\beta_{\alpha}}(\gamma) = \gamma \cdot \beta_{\alpha}$, is a $(1,2)$-quasiisometry (independent of $\beta_{\alpha}$) which records the twisting of $\gamma$ around $\alpha$ relative to $\beta_{\alpha}$.\\

Let $\HHH_{\alpha} = \HHH(\ZZ)$ be the combinatorial horoball over $\ZZ$.  Define $\pi_{\HHH_{\alpha}}:\AM(S) \rightarrow \HHH_{\alpha}$ as follows: For any $\tilde{\mu} \in \AM(S)$,
\begin{equation*} \pi_{\HHH_{\alpha}}(\tilde{\mu}) = \left\{
\begin{array}{lr} \left(\phi_{\beta_{\alpha}}(t_{\alpha}), D_{\alpha}\right) & \text{if } (\alpha, t_{\alpha}, D_{\alpha}) \in \tilde{\mu}\\
\left(\phi_{\beta_{\alpha}}(\pi_{\alpha}(\tilde{\mu})),0\right) & \text{otherwise}
\end{array}
\right.
\end{equation*}

We note that any error coming from a choice of $\beta_{\alpha} \in \CC(\alpha)$ is uniformly bounded.\\

We need to understand how to project an augmented marking to an augmented marking on a subsurface.\\

For any augmented marking $\tilde{\mu} \in \AM(S)$ and nonannular subsurface $Y \subset S$, the \emph{projection of $\tilde{\mu}$ to $\AM(Y)$} by setting $\pi_{\MM(Y)}(\mu)$ to be the underlying marking of $\pi_{\AM(Y)}(\tilde{\mu})$ and, for each $\alpha \in \mathrm{base}(\pi_{\MM(Y)}(\mu))$, we set $D_{\alpha}(\pi_{\AM(Y)}(\tilde{\mu}))$ equal to $D_{\alpha}(\tilde{\mu})$ if $\alpha \in \base(\tilde{\mu})$ and 0 otherwise.  In the case that $Y \subset S$ is an annulus with core curve $\beta$, then $\pi_{\AM(Y)}(\tilde{\mu}) = \widehat{\pi}_{\HHH_{\beta}}(\tilde{\mu})$.

\subsection{Hierarchy paths and distance formulae}

The hierarchy machinery of Masur-Minsky \cite{MM00} is a powerful, though technical, tool for understanding the coarse geometry of $\MCG(S)$.  It has two main outputs: a coarse distance formula for $\MM(S)$ (and thus $\MCG(S)$) in terms of subsurface projections and families of uniform quasigeodesic paths between any pair of markings, called \emph{hierarchy paths}.  In  \cite{Raf07}, Rafi showed that markings can coarsely encode much about a point of $\TT(S)$ in the Teichmu\"uller metric and he obtained a coarse distance formula for the Teichm\"uller metric partially in terms of subsurface projections.  In \cite{Dur13} we completed this analogy with augmented markings and adapted this distance formula to $\AM(S)$.  Along the way, we built families of uniform quasigeodesic path in $\AM(S)$ from hierarchy paths, which we call \emph{augmented hierarchy paths}.  In this subsection, we briefly collect the results we need; see \cite{MM00}, \cite{Raf07}, and \cite{Dur13} for more details.\\  

The following theorem collects some of the basic properties of augmented hierarchy paths:

\begin{theorem}[Theorem 4.2.3, Proposition 4.3.5 in \cite{Dur13}] \label{ahp thm}
Let $\tmu_1, \tmu_2 \in \AM(S)$ be any pair of augmented markings and let $\mu_1, \mu_2 \in \MM(S)$ be their underlying markings.  Let $H$ be any hierarchy between $\mu_1$ and $\mu_2$ with base geodesic $g_H \subset \CC(S)$, and $\Gamma \subset \MM(S)$ a hierarchy path based on $H$.  Then there exists an augmented hierarchy path $\widetilde{\Gamma} \subset \AM(S)$ based on $H$ between $\tmu_1$ and $\tmu_2$ with the following properties:

\begin{enumerate}
\item $\widetilde{\Gamma}$ is a uniform quasigeodesic
\item $\widetilde{\Gamma}$ has $\Gamma$ as its shadow in $\MM(S)$
\item The shadow of $\widetilde{\Gamma}$ in any curve complex or horoball is an unparametrized quasigeodesic
\end{enumerate}

\end{theorem}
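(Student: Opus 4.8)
The plan is to build $\widetilde{\Gamma}$ by hand out of $\Gamma$: I will decorate the markings along $\Gamma$ with coarse length coordinates, insert \emph{vertical} moves wherever a base curve must change its length data, and trade the long runs of twist moves that $\Gamma$ performs around a short curve for short runs of \emph{deep} (hence efficient) twist moves. The first ingredient is the combinatorial anatomy of $\Gamma$ from \cite{MM00}: $\Gamma$ is a concatenation of twist and flip moves resolving $H$, its $\CC(S)$--shadow fellow--travels $g_H$, and its shadow to $\CC(Y)$ for every domain $Y$ of $H$ is an unparametrized quasigeodesic. In particular, each curve $\alpha$ that appears in a base along $\Gamma$ has, up to bounded error, a contiguous \emph{active window} $I_\alpha$ during which $\alpha\in\base$; during $I_\alpha$ the path accomplishes essentially all of its twisting around $\alpha$, an amount $w_\alpha$ comparable to the length of the $\CC(\alpha)$--shadow of $\Gamma$; outside $I_\alpha$ the projection $\pi_\alpha$ is coarsely constant; and $\alpha$ can enter or leave a base only through a flip move at an endpoint of $I_\alpha$.

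Now construct $\widetilde{\Gamma}$ as follows. Any curve that never lies in a base along $\Gamma$ is given length coordinate $0$ throughout. The endpoints of $\widetilde{\Gamma}$ are $\tmu_1$ and $\tmu_2$, i.e.\ the endpoint markings of $\Gamma$ carrying their genuine length data. For each active window $I_\alpha$, put $h_1=D_\alpha(\tmu_1)$ if $\alpha\in\base(\tmu_1)$ and $h_1=0$ otherwise, define $h_2$ symmetrically from $\tmu_2$, and set the \emph{excursion depth} $h_\alpha\asymp\max\{h_1,h_2,\log w_\alpha\}$. Over $I_\alpha$ I replace the twisting of $\Gamma$ around $\alpha$ by: $|h_\alpha-h_1|$ vertical moves carrying the length coordinate of $\alpha$ from $h_1$ to $h_\alpha$; then $O(1)$ twist moves at depth $h_\alpha$ --- each legal since each realizes up to $e^{h_\alpha}$ powers of $T_\alpha$ --- which together accomplish all of the prescribed twisting; then $|h_\alpha-h_2|$ vertical moves carrying the length coordinate from $h_\alpha$ to $h_2$. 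Because the construction returns the length coordinate of $\alpha$ to $0$ at each end of $I_\alpha$ unless $\alpha$ remains a base curve of the corresponding endpoint marking, every flip move of $\Gamma$ is performed with all relevant length coordinates equal to $0$; hence every move of $\widetilde{\Gamma}$ is legal. The remaining, non--twist moves of $\Gamma$ are copied verbatim at length coordinate $0$, and $\widetilde{\Gamma}$ visibly inherits $H$ as its guiding hierarchy.

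The verification of (1)--(3) then proceeds as follows. Property (2) is immediate: the map $\AM(S)\to\MM(S)$ forgetting length data collapses every vertical move, sends each deep twist move to the corresponding run of $\MM(S)$--twist moves, and fixes all other moves, so the image of $\widetilde{\Gamma}$ is exactly $\Gamma$ as an unparametrized path. For (3) and nonannular $Y$, the $\CC(Y)$--shadow of $\widetilde{\Gamma}$ coarsely agrees with that of $\Gamma$ --- vertical moves fix the marking, and a twist about a base curve moves $\pi_Y$ by at most a bounded amount, exactly as in $\MM(S)$ (cf.\ Lemma \ref{lip proj}) --- hence is an unparametrized quasigeodesic by \cite{MM00}. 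For a horoball $\HHH_\alpha$, the shadow of $\widetilde{\Gamma}$ starts at $\pi_{\HHH_\alpha}(\tmu_1)$, runs along depth $0$ with horizontal coordinate $\phi_{\beta_\alpha}(\pi_\alpha(\cdot))$ coarsely constant until the window $I_\alpha$, then makes a single down--across--up excursion whose depth $h_\alpha$ is, by construction, comparable to the logarithm of the horizontal distance traversed, and then runs along depth $0$ to $\pi_{\HHH_\alpha}(\tmu_2)$; this is precisely the coarse profile of a geodesic in the combinatorial horoball, so by Lemma \ref{r:horoball qi} it is an unparametrized quasigeodesic. Finally, for (1): by construction and the Masur--Minsky length estimate, the length of $\widetilde{\Gamma}$ is comparable to the sum of $d_Y(\mu_1,\mu_2)$ over nonannular domains of $H$ (each thresholded by a suitable constant) plus the sum of $d_{\HHH_\alpha}(\tmu_1,\tmu_2)$ over annular domains; and, just as a hierarchy path in $\MM(S)$ is a quasigeodesic \cite{MM00}, one shows that any path from $\tmu_1$ to $\tmu_2$ must resolve each of these projections --- in particular it must descend into $\HHH_\alpha$ whenever $d_{\HHH_\alpha}(\tmu_1,\tmu_2)$ is large --- so its length is at least that sum; hence $\widetilde{\Gamma}$ is a uniform quasigeodesic.

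The step I expect to be the main obstacle is the global bookkeeping in the second paragraph: one must arrange, simultaneously for every curve $\alpha$, that the horoball shadow of $\widetilde{\Gamma}$ is a \emph{single} monotone excursion of the correct (logarithmic) depth, while also guaranteeing that every flip move occurs with the length coordinates of the flipped pair equal to $0$. This is exactly what forces one to use the full structure of a resolution of the hierarchy $H$ --- in particular the localization of the $\CC(\alpha)$--shadow's progress to the active window $I_\alpha$ --- rather than an arbitrary $\MM(S)$--quasigeodesic; once that structure is available, the remaining work is a careful but routine accounting of constants, including the harmless discrepancy between the $e^k$ of the twist moves and the $2^m$ of the model horoball over $\ZZ$.
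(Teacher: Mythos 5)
This statement is not actually proved in the paper: it is imported wholesale from \cite{Dur13} (Theorem 4.2.3 and Proposition 4.3.5 there), so there is no in-paper argument to compare you against. That said, your outline is essentially the construction of \cite{Dur13}: resolve the hierarchy $H$ into a hierarchy path, localize all twisting about a curve $\alpha$ to its active window, and replace that twisting by a vertical--twist--vertical excursion whose depth is comparable to $\max\{D_{\alpha}(\tmu_1),D_{\alpha}(\tmu_2),\log w_{\alpha}\}$, leaving all other moves at depth zero; properties (2) and (3) then follow just as you say, with the horoball shadow being a single coarse excursion of logarithmic depth.

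Two points would need more care before this could stand as a complete proof. First, for (1) you only compare the \emph{total} length of $\widetilde{\Gamma}$ with $d_{\AM(S)}(\tmu_1,\tmu_2)$; a quasigeodesic requires the same two-sided comparison for every subsegment, so you must also observe that subsegments of $\widetilde{\Gamma}$ are themselves coarsely augmented hierarchy paths between their endpoints, and your lower bound invokes the $\AM(S)$ distance formula (Theorem \ref{distance}), which in \cite{Dur13} is part of the same package being established -- one must check the logical order (there the formula comes from Rafi's combinatorial formula for $d_T$ plus the quasiisometry, not from the quasigeodesity of $\widetilde{\Gamma}$) to avoid circularity. Second, the bookkeeping you flag is the genuinely delicate point: flips at a curve $\beta$ can occur inside the active window of a different curve $\alpha$ whose coordinate is deep at that moment, so you need the legality condition for a flip to constrain only the flipped pair (as the remark following the definition of the moves indicates) rather than all base curves, and you must check that interleaved excursions for disjoint base curves do not force extra vertical moves that would spoil the length estimate.
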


Hierarchy paths encode subsurface projection data, about which we recall some relevant results.\\

The following lemma, which follows from [Lemma 6.2, \cite{MM00}] and [Theorem 4.2.3, \cite{Dur13}], explains which subsurfaces are always involved:

\begin{lemma}[Large links]\label{large link}
There is a $K_1>0$ such that following holds.  Let $\widetilde{\Gamma}$ be any augmented hierarchy path between $\tmu_1, \tmu_2 \in \AM(S)$ based on a hierarchy $H$.  Suppose that $d_Y(\tmu_1, \tmu_2)>K_1$, where $d_Y = d_{\HHH_{\alpha}}$ if $Y=Y_{\alpha}$.  Then exist a vertex $\tmu' \in \Tgamma$ with $\partial Y \subset \base(\tmu')$ and a vertex $v \in g_H$ with $Y \subset S \setminus v$. 
\end{lemma}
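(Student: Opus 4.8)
The plan is to derive the Large Links Lemma by transporting the analogous statement for ordinary hierarchy paths in $\MM(S)$ (Masur--Minsky's Lemma 6.2 of \cite{MM00}) across the structural relationship between $\Tgamma$ and its shadow $\Gamma$ established in Theorem \ref{ahp thm}. The two cases --- $Y$ nonannular and $Y=Y_\alpha$ an annulus --- must be handled slightly differently, since in the annular case the relevant projection distance is measured in the combinatorial horoball $\HHH_\alpha$ rather than in an annular curve complex.

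\medskip

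\noindent\textbf{Nonannular case.} First I would observe that for nonannular $Y$, the projection $\pi_Y$ of an augmented marking depends only on its underlying marking, so $d_Y(\tmu_1,\tmu_2) = d_Y(\mu_1,\mu_2)$. Choosing $K_1$ at least as large as the Masur--Minsky large-links constant, the hypothesis $d_Y(\tmu_1,\tmu_2) > K_1$ gives $d_Y(\mu_1,\mu_2) > K_1$, so by the large links axiom for the hierarchy $H$ there is a vertex $v \in g_H$ with $Y \subseteq S \setminus v$ (equivalently, $Y$ is a component domain along $g_H$, or $v$ is disjoint from $Y$) and a marking $\mu' \in \Gamma$ with $\partial Y \subset \base(\mu')$. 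Since $\Gamma$ is the shadow of $\Tgamma$ in $\MM(S)$ by Theorem \ref{ahp thm}(2), there is a vertex $\tmu' \in \Tgamma$ whose underlying marking is $\mu'$; then $\partial Y \subset \base(\mu') = \base(\tmu')$, which is exactly what is required. This case is essentially a translation.

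\medskip

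\noindent\textbf{Annular case.} For $Y = Y_\alpha$, we have $d_Y(\tmu_1,\tmu_2) = d_{\HHH_\alpha}(\tmu_1,\tmu_2)$, and this genuinely sees the length coordinates $D_\alpha$, not just the underlying markings. The key point is that $d_{\HHH_\alpha}(\tmu_1,\tmu_2)$ is large only if \emph{either} the twisting coordinate $\phi_{\beta_\alpha}(\pi_\alpha(\cdot))$ differs a lot between the two endpoints, \emph{or} one of the $D_\alpha$-coordinates is large (the horoball is ``deep''). In the first sub-case, $d_\alpha(\mu_1,\mu_2)$ is large in the ordinary annular curve complex, so Masur--Minsky large links again produces $v \in g_H$ with $\alpha = \partial Y_\alpha$ disjoint from $v$ and $\mu' \in \Gamma$ with $\alpha \in \base(\mu')$; lifting to $\tmu' \in \Tgamma$ via Theorem \ref{ahp thm}(2) finishes it, noting $\partial Y_\alpha = \{\alpha\} \subset \base(\tmu')$. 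In the second sub-case, where large distance comes from depth in the horoball, I would use that the shadow of $\Tgamma$ in $\HHH_\alpha$ is an unparametrized quasigeodesic (Theorem \ref{ahp thm}(3)): a quasigeodesic in $\HHH_\alpha$ that travels far must pass near the deep point, and in $\AM(S)$ having $D_\alpha(\tmu') > 0$ forces $\alpha \in \base(\tmu')$ (vertical and twist moves only occur at base curves, and $D_\alpha$ is defined to be $0$ off the base), which gives $\partial Y_\alpha \subset \base(\tmu')$; the accompanying $v \in g_H$ with $\alpha$ disjoint from $v$ comes from the fact that the horoball $\HHH_\alpha$ is only active along the portion of the hierarchy where $\alpha$ is a curve in the relevant complement, i.e.\ again from the large-links structure of $H$ applied to the annulus $Y_\alpha$.

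\medskip

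\noindent\textbf{Main obstacle.} The delicate step is the second annular sub-case: making precise the claim that large $d_{\HHH_\alpha}$-distance, when it is not explained by horizontal (twisting) motion, is witnessed by an intermediate augmented marking that actually sits deep in the horoball with $\alpha$ in its base. This requires knowing that an unparametrized quasigeodesic in $\HHH_\alpha$ between two points cannot ``cheat'' --- it must genuinely ascend into the horoball proportionally to the distance --- which is a standard feature of combinatorial horoballs (coarsely, the only efficient way between far-apart bottom vertices is to go up, across, and down), together with the bookkeeping that $\Tgamma$ only modifies the $\HHH_\alpha$-coordinate at stages where $\alpha$ lies in the base. I would isolate this as the crux and cite the relevant horoball-geometry estimates from \cite{Dur13} (and the hierarchy structure of $H$) rather than re-deriving them.
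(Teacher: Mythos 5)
The paper gives no argument for this lemma beyond citing Lemma 6.2 of \cite{MM00} together with Theorem 4.2.3 of \cite{Dur13}, and your proposal is exactly that argument fleshed out: transfer the Masur--Minsky large-link statement to $\Tgamma$ via the shadow property of Theorem \ref{ahp thm}, handling the horoball case by splitting into twisting versus depth. One small simplification for the deep-horoball sub-case: you do not need any large-link structure of $H$ for $Y_\alpha$ there (the annular projection distance of the underlying markings may be below the Masur--Minsky threshold); a large $D_\alpha$-coordinate already forces $\alpha \in \base(\tmu_i)$ for an endpoint of $\Tgamma$, which supplies $\tmu'$ directly, and a vertex $v$ of $g_H$ missing $\alpha$ exists simply because the endpoints of $g_H$ are base curves of $\mu_1$ and $\mu_2$.
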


The next theorem explains the large links terminology.  Masur-Minsky originally proved it holds for projections to curve complexes, but it holds for projections to horoballs by definition:

\begin{theorem}[Bounded geodesic image theorem; \cite{MM00}]\label{bgit}
There is a constant $M_0>0$ such that the following holds.  Let $\gamma \subset \CC(S)$ be any geodesic and $Y \subset S$ any subsurface.  If $d_{\CC(S)}(\gamma, \partial Y) > 1$, then $diam_{\CC(Y)}(\gamma) < M_0$.

\end{theorem}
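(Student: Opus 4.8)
The quickest route is to note that this is a weakening of the Masur--Minsky bounded geodesic image theorem: the hypothesis $d_{\CC(S)}(\gamma,\partial Y)>1$ forces every vertex of $\gamma$ to cross $\partial Y$ essentially, which is strictly stronger than Masur--Minsky's hypothesis that every vertex of $\gamma$ has nonempty projection to $Y$, so the statement follows at once from \cite{MM00}. What I would actually sketch is why such a phenomenon holds, using the machinery recorded above.

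The elementary input is a surgery estimate: if $a,b\in\CC(S)$ are disjoint and each crosses $Y$ essentially, then surgering $a\cap Y$ and $b\cap Y$ along $\partial Y$ gives $d_{\CC(Y)}(\pi_Y(a),\pi_Y(b))\prec 1$; applied to consecutive vertices of $\gamma=v_0\cdots v_n$ this yields $\text{diam}_{\CC(Y)}(\gamma)\prec n$. The whole content is to replace this length--dependent bound by a uniform one, for which I would combine the hyperbolicity of $\CC(S)$ with the Large Links Lemma. Suppose first that $Y$ is nonannular, and argue contrapositively. If $d_Y(v_0,v_n)$ exceeds a large threshold, extend $v_0,v_n$ to complete markings $\mu_0,\mu_n$ with $v_i\in\base(\mu_i)$ and then to augmented markings $\tmu_0,\tmu_n\in\AM(S)$ with all length coordinates $0$; since $\pi_Y(\tmu_k)=\pi_Y(\base(\mu_k))$ is a uniformly bounded set containing $\pi_Y(v_k)$, we still have $d_Y(\tmu_0,\tmu_n)>K_1$. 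Let $\Tgamma$ be an augmented hierarchy path between $\tmu_0$ and $\tmu_n$ with base geodesic $g_H\subset\CC(S)$ (Theorem~\ref{ahp thm}). By Lemma~\ref{large link} there is a vertex $v\in g_H$ with $Y\subset S\setminus v$, so $v$ is disjoint from, or a component of, $\partial Y$, whence $d_{\CC(S)}(v,\partial Y)\leq 1$. The endpoint vertices of $g_H$ lie in $\base(\mu_0)$ and $\base(\mu_n)$, hence within distance $1$ of $v_0$ and $v_n$; since $\CC(S)$ is Gromov hyperbolic \cite{MM99}, $g_H$ and $\gamma$ lie at uniform Hausdorff distance, so some vertex of $\gamma$ is a uniformly bounded distance from $v$, hence a uniformly bounded distance $C_0$ from $\partial Y$. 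Thus $d_{\CC(S)}(\gamma,\partial Y)>C_0$ forces $d_Y(v_0,v_n)$ bounded; applying this to every sub--geodesic of $\gamma$ bounds $\text{diam}_{\CC(Y)}(\gamma)$, which is the theorem with a uniform $C_0$ in place of the stated $1$ (the sharp form being \cite{MM00}).

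When $Y=Y_\alpha$ is an annulus the same scheme applies verbatim with $d_Y=d_{\HHH_\alpha}$ and $\CC(Y_\alpha)=\CC(\alpha)$: once $d_\alpha(\tmu_0,\tmu_n)$ is large, Lemma~\ref{large link} again produces a vertex of $g_H$ disjoint from $\alpha$, and fellow--traveling in $\CC(S)$ puts $\gamma$ near $\alpha$. Replacing $\CC(\alpha)$ by the combinatorial horoball $\HHH_\alpha$ changes nothing about diameters along $\gamma$, since a $\CC(S)$--geodesic consists of curves and $\pi_{\HHH_\alpha}$ sends each curve into the bottom level $\ZZ\times\{0\}$ via the $(1,2)$--quasiisometry $\phi_{\beta_\alpha}$; this is the ``by definition'' of the preceding remark.

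The main obstacle is precisely the step I am outsourcing to Lemma~\ref{large link}: the nontrivial assertion that accumulated subsurface--projection distance forces $\partial Y$ to appear near the relevant $\CC(S)$--geodesic. Without that input the surgery estimate only controls $\text{diam}_{\CC(Y)}(\gamma)$ linearly in the length of $\gamma$, which is useless; a from--scratch proof would instead have to reproduce one of the known direct arguments --- Masur--Minsky's induction on complexity along tight geodesics, or Webb's later unicorn--path argument in the arc complex --- in each of which the delicate point is exactly this passage from a linear to a uniform bound, using the hyperbolicity of $\CC(Y)$ together with the rigidity of $\CC(S)$--geodesics.
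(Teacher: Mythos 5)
Your proposal is correct and, on the only point the paper actually argues, it matches the paper: the curve-complex case is cited to \cite{MM00}, and the horoball case is reduced to it exactly as in the paper's proof, by observing that every vertex of $\gamma$ has zero length coordinate and so projects to the bottom level of $\HHH_{\alpha}$, whence $\mathrm{diam}_{\HHH_{\alpha}}(\gamma)$ is controlled by $\mathrm{diam}_{\CC(\alpha)}(\gamma)<M_0$. One small imprecision: bottom-level distances in the combinatorial horoball are not the same as $\ZZ$-distances but are $\asymp \log$ of them (this is what the paper records), though that only strengthens the desired bound. One caution about your supplementary sketch: deriving the uniform bound from Lemma~\ref{large link} is circular as a from-scratch argument, since in \cite{MM00} the Large Links Lemma and the rest of the hierarchy machinery are established using the bounded geodesic image theorem; you acknowledge this at the end, and since you ultimately defer to \cite{MM00} for the sharp statement, it does not affect correctness.
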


\begin{proof}
The only case at issue is when $Y= \HHH_{\alpha}$ for some $\alpha \in \CC(S)$.  Since $d_{\CC(S)}(\gamma, \alpha) > 1$, $D_{\alpha}(\gamma_i) = 0$ for each $\gamma_i \in \gamma$ and so $\mathrm{diam}_{\HHH_{\alpha}} (\gamma) \asymp \log \mathrm{diam}_{\CC(\alpha)}(\gamma)< \mathrm{diam}_{\CC(\alpha)}(\gamma)$.
\end{proof}

The following theorem combines the distance formulae from \cite{MM00} and \cite{Raf07}, respectively, and says that distances in $\MM(S)$ and $\AM(S)$ are coarsely determined by projections to large links:

\begin{theorem}\label{distance}
There exists a $K'>0$ so that for any $K>K'$ and any augmented markings $\tmu_1, \tmu_2 \in \AM(S)$ with underlying markings $\mu_1, \mu_2 \in \MM(S)$, the following hold:

\begin{itemize}
\item From \cite{MM00}, we have:

$$d_{\MM(S)}(\mu_1, \mu_2) \asymp_K \sum_{Y \subset S} \left[\left[d_Y(\mu_1, \mu_2)\right]\right]_K+ \sum_{\alpha \in \CC(S)} \left[\left[d_{\alpha}(\mu_1, \mu_2)\right]\right]_K$$

\item From \cite{Raf07}, as recorded in \cite{EMR13} and \cite{Dur13}, we have:

$$d_{\AM(S)}(\tmu_1, \tmu_2) \asymp_K \sum_{Y \subset S} \left[\left[d_Y(\mu_1, \mu_2)\right]\right]_K + \sum_{\alpha \in \CC(S)} \left[\left[d_{\HHH_{\alpha}}(\tmu_1, \tmu_2)\right]\right]_K$$

\end{itemize}
where the $Y \subset S$ are taken to be nonannular and $[[x]]_K = x$ if $x>K$ and $0$ otherwise.
\end{theorem}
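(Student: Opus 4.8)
The plan is to obtain both formulae by assembling results already in the literature; the only real content is checking that the constructions of \cite{Dur13} recast Rafi's formula into the clean form stated above.

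The first formula is the Masur--Minsky distance formula for the marking complex [Theorem 6.12, \cite{MM00}], which itself supplies a threshold above which it holds; there is nothing to add. For the second, I would first use Theorem \ref{r:aug qi} to reduce to $(\TT(S),d_T)$: a distance formula of the stated shape for $d_T$ transports across the quasiisometry $\AM(S) \to \TT(S)$, at the cost of enlarging the multiplicative and additive constants and hence possibly the threshold $K'$. Then I would invoke Rafi's combinatorial distance formula \cite{Raf07}, which expresses $d_T(X,Y)$ coarsely as the sum of the nonannular terms $[[d_Z(X,Y)]]_K$ together with, for each curve $\alpha$, a term recording the distance in the factor $\HH_{\alpha}$ of Minsky's product region $T_{\Gamma}$ (Theorem \ref{r:min prod}) between the images of $X$ and $Y$; in Fenchel--Nielsen coordinates this annular term combines the twisting around $\alpha$ with how short $\alpha$ becomes along the geodesic.

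The remaining step, and the technical heart of the argument, is to identify this annular term with the combinatorial horoball distance $d_{\HHH_{\alpha}}(\tmu_1,\tmu_2)$. The horoball $\HHH_{\alpha}$ and the projection $\pi_{\HHH_{\alpha}}$ were designed in \cite{Dur13} for exactly this: the horizontal coordinate $\phi_{\beta_{\alpha}}$ is a uniform quasiisometric record of the twist parameter, the vertical coordinate $D_{\alpha}$ coarsely records the logarithm of the length parameter, and by Lemma \ref{r:horoball qi} the resulting complex is quasiisometric to the horodisk $\HH^2_{\geq 1}$, which is the factor $\HH_{\alpha}$. So $d_{\HHH_{\alpha}}(\tmu_1,\tmu_2)$ agrees up to uniform error with the $\HH_{\alpha}$-contribution in Rafi's formula; substituting and absorbing all the errors into a single threshold $K'$ finishes the proof. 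The main obstacle is precisely verifying this identification uniformly in $\alpha$ and in the pair of points --- that $D_{\alpha}$ correctly encodes the shortest length of $\alpha$ along a hierarchy path and that no cross-terms between twisting and shortening are lost --- which is carried out in \cite{Dur13} following \cite{Raf07} and recorded in the stated form in \cite{EMR13} and \cite{Dur13}; I would simply quote it.
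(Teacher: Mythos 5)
The paper gives no proof of this theorem: it is stated purely as a quotation of the Masur--Minsky formula from \cite{MM00} together with Rafi's formula as adapted to $\AM(S)$ in \cite{Dur13} and recorded in \cite{EMR13}. Your assembly of the statement from exactly these sources, with the horoball term identified with the $\HH_{\alpha}$-contribution via the constructions of \cite{Dur13}, is correct and matches the paper's treatment.
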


We say a two subsurfaces $X$ and $Y$ \emph{interlock}, and write $X \pitchfork Y$, if $X\cap Y \neq \emptyset$ and neither is properly contained in the other.\\

The following is a result of Behrstock \cite{Beh06}.  It roughly states that if two subsurfaces interlock, then any augmented marking is close to at least one of the subsurfaces from the perspective of the other.  The $\AM(S)$ version immediately follows from the fact that intersecting curves cannot be simultaneously short:

\begin{proposition}[Behrstock's inequality] \label{beh ineq}
There is a constant $M_1>0$ such that the following holds.  Let $Y, Z \subset S$ be proper subsurfaces such that $Y \pitchfork Z$.  Then for any augmented marking $\tmu \in \AM(S)$, we have

$$\min \{d_Y(\tmu, \partial Z), d_Z(\tmu, \partial Y) \} < M_1$$

\end{proposition}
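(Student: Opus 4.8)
The plan is to bootstrap from the $\MM(S)$-version of the inequality, which is exactly the original statement of Behrstock \cite{Beh06}: there is $M_1' > 0$ so that for proper subsurfaces $Y \pitchfork Z$ and any marking $\mu \in \MM(S)$ one has $\min\{d_Y(\mu, \partial Z), d_Z(\mu, \partial Y)\} < M_1'$, where for an annulus $d_{Y_\alpha}$ denotes the $\CC(\alpha)\cong\ZZ$ distance. Given $\tmu \in \AM(S)$ with underlying marking $\mu$, the only way the $\AM(S)$-inequality could fail while the $\MM(S)$-inequality holds is if passing from $\mu$ to $\tmu$ makes one of the two projection distances \emph{larger}; by construction $\pi_W(\tmu)=\pi_W(\mu)$ for nonannular $W$, so this can only happen at an annulus, and the plan is to split on whether an annulus around a short base curve is involved.

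First suppose neither $Y$ nor $Z$ is an annulus $Y_\alpha$ with $D_\alpha(\tmu)>0$. If $W=Y_\alpha$ with $D_\alpha(\tmu)=0$, then $\pi_{\HHH_\alpha}(\tmu)$ sits at height $0$ with horizontal coordinate $\phi_{\beta_{\alpha}}(\pi_\alpha(\mu))$, as does $\pi_{\HHH_\alpha}(\partial Z)$; since $\phi_{\beta_{\alpha}}$ is a $(1,2)$-quasiisometry, Lemma \ref{r:horoball qi} and the horoball metric give $d_{\HHH_\alpha}(\tmu,\partial Z)\asymp \log\big(d_\alpha(\mu,\partial Z)+2\big)$. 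Combining this with $d_W(\tmu,\cdot)=d_W(\mu,\cdot)$ for nonannular $W$, I see that $d_Y(\tmu,\partial Z)\geq M_1$ forces $d_Y(\mu,\partial Z)\geq M_1'$ once $M_1$ is chosen large enough (a linear lower bound suffices if $Y$ is nonannular, an exponential one if $Y$ is annular), and likewise with $Y$ and $Z$ exchanged. Hence $d_Y(\tmu,\partial Z)\geq M_1$ and $d_Z(\tmu,\partial Y)\geq M_1$ together would contradict the $\MM(S)$-inequality, so $\min\{d_Y(\tmu,\partial Z),d_Z(\tmu,\partial Y)\}<M_1$ in this case.

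Now suppose, using the symmetry of the statement, that $Y=Y_\alpha$ with $D_\alpha(\tmu)>0$, so $\alpha\in\base(\tmu)$. Since $Y_\alpha\pitchfork Z$, the curve $\alpha$ is neither isotopic into $Z$, nor disjoint from $Z$, nor a component of $\partial Z$, so $\pi_Z(\alpha)\neq\emptyset$. But $\alpha$ is a base curve, hence disjoint from every other curve of $\base(\tmu)$, so $\pi_Z(\alpha)$ lies within uniformly bounded distance of $\pi_Z(\base(\mu))=\pi_Z(\tmu)$ --- this is precisely the ``intersecting curves cannot be simultaneously short'' mechanism, and is an instance of Lemma \ref{lip proj}. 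Therefore $d_Z(\tmu,\partial Y)=d_Z(\tmu,\alpha)\prec 1$. This also covers the sub-case where $Z$ is itself an annulus $Y_\beta$: then $\beta\pitchfork\alpha$ forces $\beta\notin\base(\tmu)$, so $D_\beta(\tmu)=0$ and $d_{\HHH_\beta}(\tmu,\alpha)\asymp\log(d_\beta(\mu,\alpha)+2)\prec 1$ by the previous paragraph's comparison. In either case $\min\{d_Y(\tmu,\partial Z),d_Z(\tmu,\partial Y)\}\prec 1 < M_1$.

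Choosing $M_1$ larger than $M_1'$, larger than the uniform constant produced in the second case, and large enough that the logarithmic comparison in the first case forces $d_\alpha(\mu,\cdot)>M_1'$, completes the argument. I do not expect a serious obstacle here: all the real content lies in Behrstock's $\MM(S)$-inequality, and the only point requiring care is verifying that the passage from $\MM(S)$ to $\AM(S)$ never decreases a subsurface projection distance --- which fails only for annuli around short base curves, and exactly there the offending curve is disjoint from the rest of the base, rendering its projections harmless.
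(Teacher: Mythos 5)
Your proof is correct and is essentially the paper's argument: the paper gives no detailed proof, only the remark that the $\AM(S)$ statement follows from Behrstock's original inequality for $\MM(S)$ together with the fact that intersecting curves cannot be simultaneously short. Your case analysis (reducing to the underlying marking via the logarithmic horoball comparison when no short base curve is involved, and using that a short curve lies in $\base(\tmu)$, so its projection to the interlocking subsurface coarsely agrees with that of $\tmu$, otherwise) is a faithful and accurate expansion of exactly that remark.
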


We want to understand when an augmented hierarchy path makes progress through a subsurface.  We recall the notion of an active segment of subsurface along an augmented hierarchy path from \cite{Dur14}.  Let $\tmu_1, \tmu_2 \in \AM(S)$, $\Tgamma$ an augmented hierarchy path between them, and let $Y \subset S$ be any subsurface.  The \emph{active segment} for $Y$ along $\Tgamma$ is the (possibly empty) segment $\Tgamma_Y \subset \Tgamma$ such $\partial Y \subset \base(\tmu)$ for each augmented marking $\tmu \in \Tgamma_Y$; Lemma 5.6 in \cite{Min03} says that $\Tgamma_Y$ is connected.  The following lemma says that an augmented hierarchy path coarsely only makes progress along a subsurface during its active segment:

\begin{lemma}[Active segments; \cite{Dur14}] \label{r:active segment}
Let $\Tgamma$ be as above.  Suppose that $Y \subset S$ has nonempty active segment, $\Tgamma_Y$.  There is an $M_2>0$ depending only on $S$ such that the following hold:
\begin{enumerate}

\item For any $\tilde{\eta}_1, \tilde{\eta}_2 \in \Tgamma$ preceding and following $\Tgamma_Y$, respectively, we have
\[d_Y(\tilde{\mu}_1, \tilde{\eta}_1), d_Y(\tilde{\mu}_2, \tilde{\eta}_2)< M_2\]

\item If $Z\subset S$ is any subsurface with $Y \pitchfork Z$ and $d_Y(\tmu_1, \partial Z) < M_1$, then $d_Z(\tmu_2, \partial Y) < M_1$ and 
\[\mathrm{diam}_Y(\Tgamma_Z),d_Y(\tilde{\mu}_1,\Tgamma_Z), \mathrm{diam}_Z(\Tgamma_Y),d_Z(\tilde{\mu}_2, \Tgamma_Y)<M_2\] \label{r:active interval 2}

\item Moreover, if $\partial Y \cap \partial Z \neq \emptyset$, then $\Tgamma_Y \cap \Tgamma_Z = \emptyset$.

\end{enumerate}

\end{lemma}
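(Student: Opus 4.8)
The plan is to prove the three parts of Lemma~\ref{r:active segment} by leveraging the relationship between the active segment $\Tgamma_Y$, subsurface projections, and the Behrstock inequality (Proposition~\ref{beh ineq}), together with the large links lemma (Lemma~\ref{large link}).

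\textbf{Part (1).} The idea is that outside its active segment, an augmented hierarchy path cannot make coarse progress in $\CC(Y)$ (or $\HHH_\alpha$ if $Y = Y_\alpha$). Consider the segment of $\Tgamma$ between $\tilde\eta_1$ and the first vertex of $\Tgamma_Y$: along this segment no vertex has $\partial Y$ in its base, so by Lemma~\ref{large link} (contrapositive) there is no $K_1$-large subsurface projection coming from $Y$, i.e.\ the total projection distance to $\CC(Y)$ contributed by this portion of the path is bounded. More precisely, since $\Tgamma$ is built from a hierarchy $H$ and its shadow to $\CC(Y)$ is an unparametrized quasigeodesic (Theorem~\ref{ahp thm}(3)), and since Lemma~\ref{large link} forces $\partial Y \subset \base(\tmu')$ whenever $d_Y(\tmu_1,\tmu_2) > K_1$, the projection $\pi_Y$ can only move while the path is active. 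Hence $d_Y(\tilde\mu_1, \tilde\eta_1) = d_Y(\tilde\mu_1, (\text{start of }\Tgamma_Y)) + O(1)$ and the first term is bounded because $\tilde\mu_1$ and $\tilde\eta_1$ lie on the same side of the active segment; set $M_2$ to absorb these constants (and the annular case via Theorem~\ref{bgit}). The symmetric argument handles $d_Y(\tilde\mu_2, \tilde\eta_2)$.

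\textbf{Part (2).} Suppose $Y \pitchfork Z$ with $d_Y(\tmu_1, \partial Z) < M_1$. Applying Behrstock's inequality at $\tmu_1$ to the pair $(Y,Z)$ gives nothing new, so instead I would argue as follows. Since $d_Y(\tmu_1, \partial Z) < M_1$ and $\partial Z$ is a legitimate vertex set for projections into $\CC(Y)$, if $d_Z(\tmu_2, \partial Y)$ were also $\geq M_1$ we would combine with Behrstock's inequality applied to $\tmu_2$: that inequality forces $\min\{d_Y(\tmu_2, \partial Z), d_Z(\tmu_2, \partial Y)\} < M_1$, hence $d_Y(\tmu_2, \partial Z) < M_1$. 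But then $d_Y(\tmu_1, \tmu_2) \leq d_Y(\tmu_1, \partial Z) + d_Y(\tmu_2, \partial Z) < 2M_1$ is small, while on the other hand $Z$ must have a nonempty active segment $\Tgamma_Z$ with all its projection progress in $\CC(Y)$ concentrated there — and by part (1) and the ordering of active segments along the hierarchy, the positions of $\tmu_1, \tmu_2$ relative to $\Tgamma_Y$ and $\Tgamma_Z$ are constrained. Concretely: $d_Y(\tmu_1, \partial Z)$ small means $\Tgamma_Z$ lies (coarsely) before $\Tgamma_Y$ or overlaps its start; combined with Behrstock ordering this forces $d_Z(\tmu_2, \partial Y) < M_1$. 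Once this is established, the diameter bounds follow: $\mathrm{diam}_Y(\Tgamma_Z) < M_2$ because along $\Tgamma_Z$ every vertex $\tmu$ has $\partial Z \subset \base(\tmu)$, so $\pi_Y(\tmu)$ is within $O(1)$ of $\pi_Y(\partial Z)$ by the Lipschitz projection Lemma~\ref{lip proj} (as $\partial Z$ fills $\partial Z$ and projects to $\CC(Y)$ boundedly); and $d_Y(\tmu_1, \Tgamma_Z)$ is bounded by $d_Y(\tmu_1, \partial Z) + \mathrm{diam}_Y(\Tgamma_Z) < M_1 + M_2$. The symmetric statements for $Z$ follow from the same reasoning with the roles of $Y$ and $Z$, and $\tmu_1$ and $\tmu_2$, interchanged.

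\textbf{Part (3).} If $\partial Y \cap \partial Z \neq \emptyset$, then no augmented marking can have both $\partial Y$ and $\partial Z$ in its base simultaneously, since base curves of a marking are disjoint. Therefore $\Tgamma_Y \cap \Tgamma_Z = \emptyset$ by definition of the active segments.

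\textbf{Main obstacle.} The delicate point is part (2): carefully tracking the Behrstock ordering of active segments and converting ``$d_Y(\tmu_1, \partial Z)$ is small'' into the correct claim ``$d_Z(\tmu_2, \partial Y)$ is small'' with the right quantifier on $\tmu_1$ versus $\tmu_2$. This requires knowing that interlocking subsurfaces have linearly-ordered active segments along $\Tgamma$ (an artifact of the hierarchy structure, cf.\ \cite{BM08} and \cite{Beh06}), and that the endpoint $\tmu_1$ sits on the side corresponding to the ordering. The rest of the argument is bookkeeping with the distance formula (Theorem~\ref{distance}) and the Lipschitz projection lemma to control the various diameters by a single constant $M_2$ depending only on $S$.
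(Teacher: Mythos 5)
A preliminary remark: the paper itself gives no proof of this lemma --- it is imported verbatim from \cite{Dur14} --- so there is no in-paper argument to compare your attempt against; I can only assess it on its own terms. Part (3) of your proposal is complete and correct (base curves of a marking are pairwise disjoint, so $\partial Y$ and $\partial Z$ cannot both lie in one base). Part (1) has the right idea (the projection to $\CC(Y)$, resp.\ $\HHH_\alpha$, can only move while $\partial Y$ is in the base), but as written it applies Lemma \ref{large link} to subsegments of $\Tgamma$, which presupposes that those subsegments are themselves governed by the hierarchy structure; that is exactly the technical content one has to extract from \cite{MM00}/\cite{Min03}, so your part (1) is a plausible sketch rather than a proof.

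The genuine gap is in part (2), at precisely the point you flag as the main obstacle. Your contradiction argument arrives at $d_Y(\tmu_1,\tmu_2) < 2M_1$ (up to diameter constants), but the hypothesis that $Y$ has a nonempty active segment gives no lower bound on $d_Y(\tmu_1,\tmu_2)$: the full pants decomposition $\base(\tmu')$ can contain $\partial Y$ at some vertex of $\Tgamma$ while the path makes essentially no progress in $\CC(Y)$, so nothing contradicts $d_Y(\tmu_1,\tmu_2)$ being small. You then invoke ``the Behrstock ordering of active segments'' to conclude that $d_Z(\tmu_2,\partial Y)<M_1$, but that ordering statement is exactly what part (2) asserts, so as used the argument is circular; you also assert, without justification, that $Z$ has a nonempty active segment. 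What is actually needed is the time-ordering of component domains in the hierarchy machinery (the order $\prec_t$ of \cite{MM00}, the fact that for interlocking domains the order is detected by which of $d_Y(\mu_1,\partial Z)$, $d_Y(\mu_2,\partial Z)$ is bounded, and the fact that the resolution, hence the augmented hierarchy path, traverses active segments in time order); Behrstock's inequality alone does not produce it, and this is presumably what the proof in \cite{Dur14} supplies. By contrast, once $d_Z(\tmu_2,\partial Y)<M_1$ is in hand, your derivation of the bounds on $\mathrm{diam}_Y(\Tgamma_Z)$, $\mathrm{diam}_Z(\Tgamma_Y)$, $d_Y(\tmu_1,\Tgamma_Z)$ and $d_Z(\tmu_2,\Tgamma_Y)$ via the fact that $\partial Z$ (resp.\ $\partial Y$) lies in the base along the relevant active segment and essentially intersects the other subsurface is correct.
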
 

Finally, we record the following lemma which follows easily from work in \cite{MM00} and \cite{Dur13}:

\begin{lemma}\label{dist rel to endpoints}
Let $[\tx, \ty]$ be an augmented hierarchy path, $\tz \in [\tx, \ty]$, and $Y \subset S$.  Then $d_{\AM(Y)}(\tx, \tz) \prec d_{\AM(Y)}(\tx, \ty)$.
\end{lemma}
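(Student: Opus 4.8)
The plan is to reduce the statement to the distance formula of Theorem~\ref{distance} applied inside $\AM(Y)$, combined with the fact (Theorem~\ref{ahp thm}(3)) that the shadow of an augmented hierarchy path to any curve complex or horoball is an unparametrized quasigeodesic in a uniformly hyperbolic space. Throughout, $d_{\AM(Y)}(\tx,\tz)$ is shorthand for $d_{\AM(Y)}(\pi_{\AM(Y)}(\tx),\pi_{\AM(Y)}(\tz))$, and similarly for $\tx,\ty$. First dispose of the annular case: if $r(Y)=0$, say $Y=Y_{\alpha}$, then $\AM(Y)=\HHH_{\alpha}$ and $d_{\AM(Y)}(\tx,\tz)=d_{\HHH_{\alpha}}(\tx,\tz)$. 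By Theorem~\ref{ahp thm}(3) the shadow of $[\tx,\ty]$ to $\HHH_{\alpha}$ is an unparametrized quasigeodesic, and by Lemma~\ref{r:horoball qi} the space $\HHH_{\alpha}$ is uniformly hyperbolic; hence its image lies within uniformly bounded Hausdorff distance of a geodesic from $\pi_{\HHH_{\alpha}}(\tx)$ to $\pi_{\HHH_{\alpha}}(\ty)$. Since $\tz$ lies on $[\tx,\ty]$ between $\tx$ and $\ty$, stability of quasigeodesics gives a uniform $R>0$ with $d_{\HHH_{\alpha}}(\tx,\tz)\le d_{\HHH_{\alpha}}(\tx,\ty)+R$, which is stronger than claimed.

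Now suppose $Y$ is nonannular. Applying Theorem~\ref{distance} to $\pi_{\AM(Y)}(\tx),\pi_{\AM(Y)}(\tz)\in\AM(Y)$ with a large threshold $K$, and using that for $W\subseteq Y$ nonannular and $\alpha\in\CC(Y)$ the projections $\pi_W$ and $\pi_{\HHH_{\alpha}}$ factor coarsely through $\pi_{\AM(Y)}$ (coarse naturality of subsurface projections, \cite{MM00}, together with the definition of $\pi_{\AM(Y)}$), we obtain
\[
d_{\AM(Y)}(\tx,\tz)\ \asymp_K\ \sum_{W\subseteq Y}[[d_W(\tx,\tz)]]_K\ +\ \sum_{\alpha\in\CC(Y)}[[d_{\HHH_{\alpha}}(\tx,\tz)]]_K .
\]
For each such $W$ (the case of a horoball $\HHH_{\alpha}$ is identical), the shadow of $[\tx,\ty]$ to $\CC(W)$ is an unparametrized quasigeodesic with constants depending only on $S$, and $\CC(W)$ is $\delta$-hyperbolic with $\delta$ depending only on $S$, since $W$ ranges over finitely many topological types of subsurface. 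Exactly as in the annular case, stability of quasigeodesics and the fact that $\tz$ lies between $\tx$ and $\ty$ on $[\tx,\ty]$ yield a uniform $R>0$ with $d_W(\tx,\tz)\le d_W(\tx,\ty)+R$ and $d_{\HHH_{\alpha}}(\tx,\tz)\le d_{\HHH_{\alpha}}(\tx,\ty)+R$.

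To assemble these estimates I fix a threshold $K_0$ for which Theorem~\ref{distance} applies to $\pi_{\AM(Y)}(\tx),\pi_{\AM(Y)}(\ty)$, and take the threshold $K$ above to satisfy $K>K_0+R$. Then any $W$ with $d_W(\tx,\tz)>K$ satisfies $d_W(\tx,\ty)>K_0$, so
\[
[[d_W(\tx,\tz)]]_K\ =\ d_W(\tx,\tz)\ \le\ d_W(\tx,\ty)+R\ \le\ (1+R/K_0)\,[[d_W(\tx,\ty)]]_{K_0},
\]
and likewise for each horoball term. Summing over $W\subseteq Y$ and $\alpha\in\CC(Y)$ and applying Theorem~\ref{distance} a second time, now to $\pi_{\AM(Y)}(\tx),\pi_{\AM(Y)}(\ty)$ with threshold $K_0$, gives $d_{\AM(Y)}(\tx,\tz)\prec d_{\AM(Y)}(\tx,\ty)$, as desired. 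The only delicate point is this last bookkeeping step: one must ensure that the additive errors $R$ from the quasigeodesic estimates do not accumulate over the a priori unbounded number of nonzero summands. This is handled precisely as above, by observing that each nonzero term on the left forces the corresponding term on the right to clear its threshold $K_0$, so the additive error is absorbed multiplicatively into $[[d_W(\tx,\ty)]]_{K_0}$ term by term before summation; everything else is a direct appeal to the results quoted above.
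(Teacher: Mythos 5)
Your proof is correct. The paper gives no argument for this lemma at all---it only remarks that it ``follows easily from work in \cite{MM00} and \cite{Dur13}''---and your route (the shadow of the augmented hierarchy path to each $\CC(W)$ and each $\HHH_{\alpha}$ is an unparametrized quasigeodesic in a uniformly hyperbolic space, so each projection term satisfies $d_W(\tx,\tz)\le d_W(\tx,\ty)+R$, and these are assembled with the distance formula using a threshold $K>K_0+R$ so the additive Morse constant is absorbed multiplicatively term by term) is precisely the standard argument those citations are pointing to, with the annular/horoball case handled correctly via Lemma \ref{r:horoball qi} and Theorem \ref{ahp thm}(3).
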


\subsection{Product regions in $\AM(S)$} \label{prod sect}

There are natural product regions in $\AM(S)$ which correspond to the Minsky product regions from Theorem \ref{r:min prod}.  We recall some facts from \cite{Dur14}.\\

For any simplex $\Delta \subset \CC(S)$, set $Q(\Delta) = \{\tmu \hspace{.05in}|\hspace{.05in} \Delta \subset \base(\tmu)\}$.  Let $\sigma(\Gamma)$ be all nonpants subsurfaces of $S \setminus \Delta$, including the annuli around each $\gamma \in \Delta$.  The following lemmata are easy consequences of Theorem \ref{distance}:

\begin{lemma}\label{prod reg}
There is a natural quasiisometry defined by subsurface projections:

\[\Theta: Q(\Delta) \rightarrow \prod_{Y \in \sigma(\Delta)} \AM(Y)\]
\end{lemma}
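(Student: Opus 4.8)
The plan is to show that $\Theta$ is a quasiisometry by exhibiting a coarse inverse and checking the distance estimates on both sides, all of which reduce to the distance formula of Theorem~\ref{distance}. First I would define $\Theta$ explicitly: for $\tmu \in Q(\Delta)$ and $Y \in \sigma(\Delta)$, set the $Y$-coordinate of $\Theta(\tmu)$ to be $\pi_{\AM(Y)}(\tmu)$, where for nonannular $Y$ this is the projection of the preceding subsection and for an annulus $Y_\gamma$ with $\gamma \in \Delta$ it is $\widehat{\pi}_{\HHH_\gamma}(\tmu) \in \HHH_\gamma$. Since $\Delta \subset \base(\tmu)$, each $\partial Y \subset \base(\tmu)$ for $Y \in \sigma(\Delta)$, so by the remark following Lemma~\ref{lip proj} (and the explicit horoball projection formula) each coordinate is a genuine point rather than a coarse set, so $\Theta$ is honestly well-defined, not merely coarsely.

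Next I would establish the key claim that distances are detected by exactly the subsurfaces in $\sigma(\Delta)$. Given $\tmu_1, \tmu_2 \in Q(\Delta)$, apply the $\AM(S)$ distance formula of Theorem~\ref{distance}: $d_{\AM(S)}(\tmu_1,\tmu_2) \asymp_K \sum_{Y \subset S}[[d_Y(\mu_1,\mu_2)]]_K + \sum_{\alpha \in \CC(S)}[[d_{\HHH_\alpha}(\tmu_1,\tmu_2)]]_K$. I claim that, up to adjusting thresholds, only terms with $Y \in \sigma(\Delta)$ or $\alpha \in \Delta$ contribute. Indeed, if $Y \not\subset S \setminus \Delta$ and $Y$ is not an annulus about a curve of $\Delta$, then either $\partial Y$ meets $\Delta$ or $Y$ strictly contains a component of $S \setminus \Delta$; in the first case $Y \pitchfork \gamma$ for some $\gamma \in \Delta$, and since $\gamma \in \base(\tmu_i)$ we get $d_\gamma(\tmu_i, \partial Y)$ bounded only if... — more directly, Behrstock's inequality (Proposition~\ref{beh ineq}) applied with $Z = Y_\gamma$ forces $d_Y(\tmu_1, \partial Z)$ and $d_Y(\tmu_2,\partial Z)$ both small because $\partial Z = \gamma \in \base(\tmu_i)$ projects into $Y$ with bounded image, so $d_Y(\tmu_1,\tmu_2) \prec 1$. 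The case where $Y$ contains a complementary component is handled because then some $\gamma \in \Delta$ lies inside $Y$, and $d_Y(\mu_i, \gamma) \asymp 1$ since $\gamma \in \base(\mu_i)$, again bounding $d_Y(\tmu_1,\tmu_2)$. This shows the right-hand side of the distance formula is coarsely a sum over $\sigma(\Delta)$.

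Then I would observe that for $Y \in \sigma(\Delta)$ the term $[[d_Y(\mu_1,\mu_2)]]_K$ (resp. $[[d_{\HHH_\gamma}(\tmu_1,\tmu_2)]]_K$) is exactly, up to uniform additive and multiplicative error, the corresponding term computed between $\pi_{\AM(Y)}(\tmu_1)$ and $\pi_{\AM(Y)}(\tmu_2)$ inside $\AM(Y)$; here I use Lemma~\ref{lip proj} to pass between subsurface projections in $S$ and in $Y$ with bounded error, and the definition of $\pi_{\AM(Y)}$ together with the horoball projection formula. Applying the $\AM(Y)$ distance formula (Theorem~\ref{distance} relative to $Y$) to each factor, the full sum becomes $\sum_{Y \in \sigma(\Delta)} d_{\AM(Y)}(\pi_{\AM(Y)}(\tmu_1), \pi_{\AM(Y)}(\tmu_2))$, which is the $\ell^1$ (hence, up to the number $|\sigma(\Delta)|$ of factors, the sup) metric on $\prod_{Y \in \sigma(\Delta)} \AM(Y)$ evaluated at $\Theta(\tmu_1), \Theta(\tmu_2)$. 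Thus $d_{\AM(S)}(\tmu_1,\tmu_2) \asymp d_{\prod}(\Theta(\tmu_1),\Theta(\tmu_2))$, giving the bi-Lipschitz estimate. For coarse surjectivity, given any tuple $(\tnu_Y)_{Y \in \sigma(\Delta)}$ I would assemble an augmented marking containing $\Delta$ in its base by taking the union of the $\base$ of each $\tnu_Y$ together with $\Delta$ itself, with transversals and length coordinates inherited from the $\tnu_Y$ (and $D_\gamma$ for $\gamma \in \Delta$ read off the horoball coordinate); this lies in $Q(\Delta)$ and maps under $\Theta$ uniformly close to the given tuple.

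The main obstacle is the first half of the distance-formula reduction: rigorously showing that subsurfaces $Y \not\in \sigma(\Delta)$ make no coarse contribution to $d_{\AM(S)}(\tmu_1,\tmu_2)$ when $\tmu_1,\tmu_2 \in Q(\Delta)$. This is precisely the point where Behrstock's inequality and the Lipschitz projection lemma must be combined carefully, with attention to the annular cases and to making the threshold adjustments uniform; the bookkeeping of which subsurfaces interlock $\Delta$ versus which contain or are disjoint from it is the technical heart, while the rest is a fairly mechanical comparison of two instances of the distance formula.
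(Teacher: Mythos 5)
Your argument is correct and follows exactly the route the paper intends: the paper gives no proof, stating only that the lemma is an easy consequence of the distance formula (Theorem \ref{distance}), and your reduction of that formula to the subsurfaces in $\sigma(\Delta)$, the factor-by-factor comparison with the $\AM(Y)$ distance formulae, and the assembly argument for coarse surjectivity fill in precisely those details. One minor note: the appeal to Behrstock's inequality is superfluous --- the bound $d_Y(\tmu_1,\tmu_2)\prec 1$ for $Y$ meeting $\Delta$ follows directly, as your own ``because'' clause says, from the fact that some $\gamma\in\Delta$ lies in $\base(\tmu_1)\cap\base(\tmu_2)$ and hence $\pi_Y(\tmu_i)\supset\pi_Y(\gamma)$ with uniformly bounded diameter (Lemma \ref{lip proj}).
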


We write $Y \pitchfork \Gamma$ if $\Gamma$ cannot be deformed away from $Y$.

\begin{lemma}\label{dist to prod}
For $\tx, \ty \in Q(\Delta)$, we have $d_Y(\tx, \ty) \asymp 1$ for any $Y \pitchfork \Gamma$ and thus

\[d_{\AM(S)}(\tx, \ty) \asymp \sum_{Y \subset \sigma(\Gamma) } \left[d_Y(\tx, \ty)\right]_K\]
\end{lemma}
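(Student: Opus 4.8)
The plan is to prove the two assertions in turn, using nothing beyond the $\AM(S)$ distance formula of Theorem \ref{distance}. For the first claim, that $d_Y(\tx,\ty)\asymp 1$ whenever $Y\pitchfork\Delta$, I would argue directly from the fact that $\tx$ and $\ty$ share the simplex $\Delta$ in their bases. Here $Y\pitchfork\Delta$ means $\Delta$ cannot be isotoped off $Y$, i.e.\ some $\gamma\in\Delta$ has $\pi_Y(\gamma)\neq\emptyset$; and since $\tx,\ty\in Q(\Delta)$ we have $\gamma\in\base(\tx)\cap\base(\ty)$. When $Y$ is nonannular, $\pi_Y(\tx)=\pi_Y(\base(\tx))$ and $\pi_Y(\ty)=\pi_Y(\base(\ty))$ are each a nonempty simplex in $\CC(Y)$ lying within uniformly bounded distance of $\pi_Y(\gamma)$ (because $\gamma\cap Y$ is a subcollection of $(\base(\tx))\cap Y$ and of $(\base(\ty))\cap Y$), so $d_Y(\tx,\ty)\prec 1$ by the triangle inequality; this is really just the standard fact that the projection of a pants decomposition to a subsurface has uniformly bounded diameter, see \cite{MM00}. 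When $Y=Y_\alpha$ is annular, the same estimate applies once one observes that $\alpha$ essentially intersects $\gamma$, hence $\alpha\notin\base(\tx)\cup\base(\ty)$; then $D_\alpha(\tx)=D_\alpha(\ty)=0$, both $\pi_{\HHH_\alpha}(\tx)$ and $\pi_{\HHH_\alpha}(\ty)$ sit at height $0$ in $\HHH_\alpha$, and a uniform bound on $d_\alpha(\tx,\ty)$ translates, via the quasiisometry $\phi_{\beta_\alpha}$ and the edge structure of the combinatorial horoball at height $0$, into a uniform bound on $d_{\HHH_\alpha}(\tx,\ty)$. This produces a constant $K_0=K_0(S)$ with $d_Y(\tx,\ty)<K_0$ for every $Y\pitchfork\Delta$.

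With this in hand the distance formula is a short deduction. I would apply the $\AM(S)$ distance formula of Theorem \ref{distance} with a threshold $K\geq\max(K',K_0)$, writing $d_{\AM(S)}(\tx,\ty)$ coarsely as the sum of the terms $[[d_Y(\tx,\ty)]]_K$ over nonannular $Y$ together with the terms $[[d_{\HHH_\alpha}(\tx,\ty)]]_K$ over $\alpha\in\CC(S)$. Every subsurface indexing this formula either can be isotoped off $\Delta$---in which case, once one accounts for the annuli $Y_\gamma$ with $\gamma\in\Delta$, it lies in $\sigma(\Delta)$---or else interlocks $\Delta$, in which case its term is $<K_0\leq K$ by the first part and is killed by the threshold. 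So only the $\sigma(\Delta)$-terms survive, and writing $d_{Y_\alpha}:=d_{\HHH_\alpha}$ for the annular factors we obtain the asserted formula; since distance formulae are coarsely insensitive to the threshold above $K'$, the constant $K$ in the statement causes no trouble.

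The computations are all routine. The single point where I would take care is the classification step in the second paragraph: checking that the index set of the $\AM(S)$ distance formula splits exactly into $\sigma(\Delta)$ and the subsurfaces interlocking $\Delta$, and in particular that the annuli $Y_\gamma$, $\gamma\in\Delta$---which carry precisely the coordinates along which $\tx$ and $\ty$ may differ without bound---fall inside $\sigma(\Delta)$ rather than among the interlocking subsurfaces. Everything else is bookkeeping against the definitions of subsurface projection and of $Q(\Delta)$.
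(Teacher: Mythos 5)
Your proposal is correct, and it is essentially the argument the paper intends: Lemma \ref{dist to prod} is stated there as an easy consequence of Theorem \ref{distance}, and your two steps — the uniform bound $d_Y(\tx,\ty)\asymp 1$ for every $Y$ meeting $\Delta$ essentially (nonannular via bounded-diameter projections of bases containing a common curve of $\Delta$, annular via $D_\alpha=0$ and bounded twisting at height $0$ in $\HHH_\alpha$), followed by raising the threshold in the distance formula so that only terms indexed by subsurfaces isotopic into $S\setminus\Delta$ or annuli about curves of $\Delta$ survive — is exactly how that consequence is drawn. Your care with the classification of the index set, in particular placing the annuli $Y_\gamma$, $\gamma\in\Delta$, inside $\sigma(\Delta)$, is the right point to check, and your treatment of it is correct.
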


\subsection{Asymptotic cones}

The asymptotic cone of a metric space encodes its geometry as seen from arbitrarily far away.  Originally introduced by Gromov \cite{Gro81} (see also \cite{DW84}) to prove his famous polynomial growth theorem, asymptotic cones have been widely used to study the large scale geometry of many groups and spaces.  We now give a brief overview of the basic notions.\\

A \emph{(nonprincipal) ultrafilter} is a finitely additive probability measure $\omega: 2^{\mathbb{N}} \rightarrow \{0,1\}$ for which every finite set has measure zero.  The existence of such a measure is an easy consequence of Zorn's Lemma.\\

Let $X$ be a geodesic metric space.  Given a sequence $\{x_n\} \subset X$ and a point $x$, we say that $x$ is an $\omega$-\emph{ultralimit} of $\{x_n\}$ if every open neighborhood $U$ of $x$ satisfies $\omega\{n\hspace{.05in}|\hspace{.05in}x_n \in U\} = 1$.  We write $\lim_{\omega} x_n = x$ or $x_n \underset{\omega}{\longrightarrow} x$.  We note that ultralimits are unique when they exist and every sequence in a compact set has an ultralimit.\\

We define the ultralimit of a based sequence of metric spaces $(X_n, x_n, dist_n)$ as follows.  For any sequence $\by = (y_n) \in \prod_n X_n$, define $d(\bx, \bz) = \lim_{\omega} dist_n(y_n, z_n)$.  We can define
$$(X_n, x_n, dist_n) = \{\by \hspace{.05in}| \hspace{.05in}d(\bx, \by) < \infty\}/\sim$$
where $\by \sim \bz$ if $d(\by, \bz) = 0$, making the quotient a metric space.\\

For the rest of this paper, fix a nonprincipal ultrafilter, $\omega$, and a sequence of numbers, $s_n \rightarrow \infty$.  Pick a base point sequence $\{x_n\} \subset X$.  The \emph{asymptotic cone} of $(X, dist)$ is the ultralimit
$$\mathrm{Cone}_{\omega}(X, x_n, dist) = \lim_{\omega} (X, x_n, \frac{dist}{s_n})$$

We note that $\mathrm{Cone}_{\omega}(X, x_n, dist)$ is a geodesic space since $X$ is.\\

We now make some brief observations about sequences of subsurfaces.  Any sequence of subsurfaces $\bW = \{W_n\}$ has only finitely-many topological types and thus this type is $\omega$-a.e. constant.  We call this type the \emph{topological type} of $\bW$.  Given another sequence $\bV=\{V_n\}$ of subsurfaces, we can similarly define $\bV \subset \bW$ or $\bV \pitchfork \bW$ if $V_n \subset W_n$ or $V_n \pitchfork W_n$ for $\omega$-a.e. $n$.

\section{Thick and thin cones}

The Cayley graph of a finitely generated group, such as $\MCG(S)$, is homogeneous  in the sense that every vertex looks like the identity up to the action of the group.  One well-known consequence is that the asymptotic cone of a finitely generated group is independent of the choice of base point sequence.  By contrast, $\AM(S)$ does not even have bounded valence.  In this section, we classify the asymptotic cones of $\TT(S)$ up to bi-Lipschitz homeomorphism by analyzing how a cone depends on the choice of basepoint sequence.
\\

The choice of base point sequence essentially breaks into two collections, which we call \emph{thick} or \emph{thin}, depending on whether or not the sequence escapes the thick part faster than the scaling sequence.  In the thick case, we prove that all corresponding asymptotic cones are bi-Lipschitz homeomorphic.  By contrast, the thin case breaks into the asymptotic cones of the Minsky product regions.  The bulk of this section deals with analyzing the possible thin cases.\\

Let $\bmu = \{\tmu_n\} \subset \AM(S)$ be a sequence of augmented markings and let $\base(\tmu_n) = \{\alpha_{n,i}\}_i$ be the corresponding bases.  For each $n$, reorder the base curves from largest coarse length to smallest.  Let

$$\bold{A}(\bmu) = \left|\left\{i \hspace{.05in} \Big| \hspace{.05in}\frac{D_{\alpha_{n,i}}(\tmu_n)}{s_n}\underset{\omega}{\longrightarrow} \infty \right\}\right|$$
be the number of base curves in $\tmu_n$, for $\omega$-a.e. $n$, whose lengths escape faster to infinity than $s_n$.\\

When $\bold{A}(\bmu) = 0$, we say that $\bmu$ is $\emph{thick}$ and $\bmu$ is \emph{thin} otherwise.  When $\bmu$ is thin, then $\tmu_n$ is more than $\bold{A}(\bmu)\cdot s_n$ away from the thick part for $\omega$-a.e. $n$.  That is, $\bmu$ is escaping the thick part of $\AM(S)$ faster than $s_n$.  We emphasize that this does not imply that the coarse length coordinates of the augmented markings in a thick base sequence cannot escape to infinity, just that they do so in a controlled manner.

\subsection{Thick cones}

The following proposition states that all asymptotic cones of $\AM(S)$ with thick base sequences are bi-Lipschitz homeomorphic:

\begin{proposition}[Thick asymptotic cones are all coarsely the same]
Let $\bmu, \bnu \subset \AM(S)$ be thick sequences.  Then $(\AM_{\omega}(S),\bmu)$ and $(\AM_{\omega}(S),\bnu)$ are bi-Lipschitz homeomorphic.
\end{proposition}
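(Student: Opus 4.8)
The plan is to prove the stronger statement that $(\AM_\omega(S),\bmu)$ and $(\AM_\omega(S),\bnu)$ are in fact \emph{isometric} via a bijection induced by a sequence of mapping classes; the asserted bi-Lipschitz homeomorphism follows a fortiori. The engine is the following observation: if one can find $g_n \in \MCG(S)$ with $d_{\AM(S)}(g_n\tmu_n, \tnu_n) \prec s_n$, then $\lim_\omega d_{\AM(S)}(g_n\tmu_n,\tnu_n)/s_n < \infty$, so $\{g_n\tmu_n\}$ and $\{\tnu_n\}$ lie at finite distance in a common cone; hence $(\AM_\omega(S),\{g_n\tmu_n\})$ and $(\AM_\omega(S),\bnu)$ are literally the same metric space, while $\{g_n\}$ acts on $\AM(S)$ by isometries and thus induces an isometry $(\AM_\omega(S),\bmu) \to (\AM_\omega(S),\{g_n\tmu_n\})$. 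So everything reduces to producing the $g_n$.

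First I would match the pants decompositions at the base. For $\omega$-a.e.\ $n$, $\base(\tmu_n)$ and $\base(\tnu_n)$ are both pants decompositions of $S$, so by the change of coordinates principle (transitivity of the $\MCG(S)$-action on pants decompositions) there is $g_n \in \MCG(S)$ with $g_n(\base(\tmu_n)) = \base(\tnu_n) =: \Delta_n$. Then both $g_n\tmu_n$ and $\tnu_n$ lie in the product region $Q(\Delta_n)$. Since $\Delta_n$ is a complete pants decomposition, every component of $S\setminus\Delta_n$ is a pair of pants, so $\sigma(\Delta_n)$ consists precisely of the annuli $Y_\gamma$, $\gamma\in\Delta_n$, whose associated factors of $\prod_{Y\in\sigma(\Delta_n)}\AM(Y)$ are the combinatorial horoballs $\HHH_\gamma$. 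By Lemmas \ref{prod reg} and \ref{dist to prod},
$$d_{\AM(S)}(g_n\tmu_n,\tnu_n) \;\asymp\; \sum_{\gamma\in\Delta_n} d_{\HHH_\gamma}(g_n\tmu_n,\tnu_n).$$
Next I would post-compose $g_n$ with a product $\prod_{\gamma\in\Delta_n} T_\gamma^{k_{\gamma,n}}$ of powers of Dehn twists in the curves of $\Delta_n$: this preserves $Q(\Delta_n)$, leaves all coarse length coordinates $D_\gamma$ unchanged, and, because $T_\gamma$ is supported in a collar of $\gamma$ disjoint from the collars of the other curves of $\Delta_n$, alters the projection to $\HHH_{\gamma'}$ for $\gamma'\neq\gamma$ only within bounded error. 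Choosing each $k_{\gamma,n}$ to cancel the twisting difference, I arrange that the $\ZZ$-coordinate of $\pi_{\HHH_\gamma}(g_n\tmu_n)$ is within $O(1)$ of that of $\pi_{\HHH_\gamma}(\tnu_n)$ for every $\gamma\in\Delta_n$.

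With the $\ZZ$-coordinates bounded, the definition of $\HHH(\ZZ)$ gives $d_{\HHH_\gamma}(g_n\tmu_n,\tnu_n) \asymp |D_\gamma(g_n\tmu_n) - D_\gamma(\tnu_n)| + O(1)$. This is exactly where thickness enters: $\bold{A}(\bmu)=\bold{A}(\bnu)=0$ means $D_\gamma(\tmu_n), D_\gamma(\tnu_n) \prec s_n$ for $\omega$-a.e.\ $n$ (and $D_\gamma(g_n\tmu_n) = D_{g_n^{-1}\gamma}(\tmu_n)$ is one of the coarse lengths of $\tmu_n$), so each summand, and hence $d_{\AM(S)}(g_n\tmu_n,\tnu_n)$, is $\prec s_n$. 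This yields the $g_n$ demanded by the first paragraph and finishes the argument.

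I expect the main obstacle to be the middle step: the honest verification that, once the base pants decompositions are matched, the \emph{entire} remaining discrepancy between $g_n\tmu_n$ and $\tnu_n$ is confined to the horoball directions (no non-annular subsurface projections survive, by Lemma \ref{dist to prod}, precisely because $\Delta_n$ is complete), that the commuting Dehn twists in $\Delta_n$ can simultaneously trivialize all of the $\ZZ$-coordinate discrepancies, and that every implied constant (in Lemma \ref{dist to prod}, in the quasi-isometry of Lemma \ref{prod reg}, and in the metric on $\HHH(\ZZ)$) is uniform, so that $d_{\AM(S)}(g_n\tmu_n,\tnu_n) \leq \mathrm{const}\cdot s_n$ holds with a constant independent of $n$. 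It is worth noting that this is exactly the place where thickness cannot be dropped: for a thin base sequence some $D_\gamma(\tmu_n)/s_n \to \infty$, the corresponding horoball summand is no longer $\prec s_n$, and the cone really does depend on the basepoint --- which is why the thin case must be treated separately below.
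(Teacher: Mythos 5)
Your proposal is correct, and it shares the paper's skeleton --- produce mapping classes $g_n$ carrying $\tmu_n$ to within $O(s_n)$ of $\tnu_n$, note that the cone based at $\{g_n\tmu_n\}$ is literally the cone based at $\bnu$, and let thickness absorb the coarse-length discrepancies --- but the middle step is implemented differently. The paper simply takes $\phi_n$ from coarse transitivity (cocompactness) of $\MCG(S)$ on the marking complex $\MM(S)$, so that $d_{\MM(S)}(\phi_n\mu_n,\nu_n)\leq D$ uniformly, and then compares the $\MM(S)$ and $\AM(S)$ distance formulas to bound $d_{\AM(S)}(\phi_n\tmu_n,\tnu_n)$ by $D$ plus the sum of the $|D_\alpha|$-differences; this subsumes in one stroke both your base-matching and your twist-matching, since bounded marking distance already controls all twisting data, so there is nothing to check about non-annular projections or commuting twists. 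Your route is more hands-on: change of coordinates matches the pants decompositions, Lemma \ref{dist to prod} with $\Delta_n$ complete reduces everything to horoball terms, and the twist powers kill the $\ZZ$-coordinates --- buying independence from cocompactness on $\MM(S)$ and from the marking distance formula, at the price of the bookkeeping you flag (which does check out: cleanness of markings makes the twists act independently on the transversals, and the twist-matching is genuinely needed in your version, since without it the horizontal horoball coordinates could differ by amounts far exceeding $s_n$). One small correction of emphasis: your ``$D_\gamma(\tmu_n)\prec s_n$'' should not be read with constants depending only on $S$; thickness gives $D_\gamma(\tmu_n)\leq C(\bmu)\,s_n$ for $\omega$-a.e.\ $n$ with $C$ depending on the sequence, which suffices because only finiteness of $\lim_\omega d_{\AM(S)}(g_n\tmu_n,\tnu_n)/s_n$ is needed. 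Your stronger conclusion (isometry rather than bi-Lipschitz) is legitimate, as $\MCG(S)$ acts on $\AM(S)$ by simplicial automorphisms; the paper's own map has the same property, though it only records the bi-Lipschitz statement.
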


\begin{proof}
For each $n$, let $\mu_n, \nu_n \in \MM(S)$ be the markings underlying $\tmu_n, \tnu_n \in \AM(S)$, respectively.  Since $\MCG(S)$ acts coarsely transitively by isometries on $\MM(S)$, there are a constant $D>0$ and a sequence of mapping classes $\phi_n \in \MCG(S)$ with $d_{\MM(S)}(\phi_n(\mu_n), \nu_n) \leq D$, for each $n$.  We claim that there is a $D'>0$ with

$$\frac{d_{\AM(S)}(\phi_n(\tmu_n),\tnu_n)}{s_n} \underset{\omega}{\longrightarrow} D'$$

Set $\widehat{\phi} = \lim_{\omega} \phi_n$.  The claim implies that $\widehat{\phi}: (\AM(S), \bmu) \rightarrow (\AM(S), \bnu)$ is a bi-Lipschitz homeomorphism.\\

To see the claim, observe that Theorem \ref{distance} implies that

\begin{eqnarray*}
d_{\AM(S)}(\phi_n(\tmu_n),\tnu_n) &\prec& d_{\MM(S)}(\phi_n(\mu_n),\nu_n) + \sum_{\alpha} \left|D_{\alpha}(\phi_n(\tmu_n)) - D_{\alpha}(\tnu_n)\right|\\
&\leq& d_{\MM(S)}(\phi_n(\mu_n), \nu_n) + \sum_{\alpha} \left(D_{\phi_n(\alpha)}(\tmu_n) + D_{\alpha}(\tnu_n)\right)
\end{eqnarray*}

Note that both sums above always have finitely many terms.  Since  $\bmu$ and $\bnu$ are thick and $d_{\MM(S)}(\phi_n(\mu_n), \nu_n)$ is uniformly bounded, the claim follows.

\end{proof}

Much of the rest of the paper is proving the Dimension Theorem for thick base sequences.  The remainder of this section is occupied with proving that the thin case can be reduced to the thick case.  The Dimension Theorem for the thin case is easily reduced to the thick case in Corollary \ref{dim thm 3} by induction.

\subsection{Thin cones are products} \label{r:thin cone section}

For the rest of this section, suppose that $\bmu$ is thin, i.e. that $\mathbf{A}(\bmu) > 0$.\\

For each $n$, let 

$$X_n(\tmu_n) = S \hspace{.05in} \setminus \hspace{.05in} \underset{i \leq \mathbf{A}(\bmu)}{\coprod} \alpha_{n,i}$$

Set $\bold{X} (\bmu)= \{X_n(\tmu_n)\}_n$.  Each $X_n(\tmu_n)$ is a disjoint union of subsurfaces and, because there are only finitely many, the topological type of $X_n$ is $\omega$-a.e. constant.  We call this number the topological type of $\bold{X}(\bmu)$.\\

Let $\bold{X}(\bmu) = \coprod_j \mathbf{Y}_j$ be the components of $\bold{X}(\bmu)$, where we ignore any components which are pairs of pants.  To be precise, for $\omega$-a.e. $n$ and for each $j$, there is some $Y_{n,j} \subset X_n(\tmu_n)$ with $Y_{n,j}$ homeomorphic to $\mathbf{Y}_j$.\\

By passing to a subsequence with indices of full $\omega$-measure, we may that for each $n$:

\begin{itemize}
\item For each $i= 1, \dots, \bold{A}(\bmu)$, we have $D_{\alpha_{n,i}}(\tmu_n)>s_n$ and \item For each $j$, there is a $Y_{n,j} \subset X_n(\tmu_n)$ with $Y_{n,j}$ homeomorphic to $\mathbf{Y}_j$
\end{itemize}

The following theorem states that the asymptotic cone of $\TT(S)$ with a thin base sequence is the product of the asymptotic cones of the Teichm\"uller spaces of the components, $Y_j$, and some $\RR$-trees which are the asymptotic cones of the horoballs over the thin curves, $\alpha_i$.\\

Before we state the theorem, we choose natural base sequences for each component.\\

For each  $i= 1, \dots, \bold{A}(\bmu)$, let $\mathbf{x}_i= \pi_{\HHH_{\alpha_{n,i}}}(\tmu_n)$, where the horoball projection $\pi_{\HHH_{\alpha_{n,i}}}$ is taken relative to the transversal of $\alpha_{n,i}$ in $\tmu_n$.  Let $\bold{T}_i = \underset{\omega}{\lim}\left(\HHH_{\alpha_{n,i}}, \mathbf{x}_i,s_n\right)$, which is an $\RR$-tree.\\

For each $j$, let $\tmu_{n,j} = \pi_{\AM(Y_{n,j})}(\tmu_n)$.  Set $\AM_{\omega}(\mathbf{Y}_j) = \underset{\omega}{\lim}\left(\AM(Y_{n,j}),\tmu_{n,j},s_n\right)$.  Note that $\tmu_{n,j}$ is a thick sequence relative to $\AM(Y_{n,j})$.

\begin{theorem}[Classification of thin asymptotic cones] \label{thin cones}
Let $\bmu \subset \AM(S)$ be such that $\bold{A}(\bmu) > 0$.  Then the asymptotic cone of $(\AM(S),\bmu, s_n)$ is bi-Lipschitz homeomorphic to

$$\prod_j \AM_{\omega}(Y_i) \times \prod_{i=1}^{\bold{A}(\bmu)} \bold{T}_i$$

\end{theorem}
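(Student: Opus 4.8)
The plan is to show that the product of the subsurface projections $\pi_{\AM(Y_{n,j})}$ and the horoball projections $\pi_{\HHH_{\alpha_{n,i}}}$ is the desired bi-Lipschitz homeomorphism; the point is that thinness forces every point of the cone to lie deep in the product region of the thin curves, so that Behrstock's inequality collapses everything else. Write $\Delta_n=\{\alpha_{n,1},\dots,\alpha_{n,\mathbf{A}(\bmu)}\}$ for the simplex of thin curves, so that the non-pants components $\mathbf{Y}_j$ of $S\setminus\Delta_n$ together with the annuli $Y_{\alpha_{n,i}}$, $i\le\mathbf{A}(\bmu)$, are exactly the elements of $\sigma(\Delta_n)$, and $Q(\Delta_n)$ is the product region of Lemma~\ref{prod reg}. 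Each projection $\pi_{\AM(Y_{n,j})}$ and $\pi_{\HHH_{\alpha_{n,i}}}$ is uniformly coarsely Lipschitz (by Theorem~\ref{distance}), so, using the basepoints $\tmu_{n,j}$ and $\mathbf{x}_i$ fixed above, they induce a Lipschitz map
\[
\Pi\colon\ \AM_{\omega}(S)\ \longrightarrow\ \prod_j \AM_{\omega}(\mathbf{Y}_j)\ \times\ \prod_{i=1}^{\mathbf{A}(\bmu)}\mathbf{T}_i ,
\]
the target having $\omega$-a.e.\ a constant finite number of factors. I would prove $\Pi$ is a bi-Lipschitz homeomorphism in two steps: first derive a coarse distance formula for $\AM_{\omega}(S)$ based at $\bmu$ in which only these factors appear — this makes $\Pi$ a bi-Lipschitz embedding — and then establish surjectivity using the product-region quasiisometry $\Theta$ of Lemma~\ref{prod reg} and Lemma~\ref{dist to prod}.

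The heart of the argument is the following collapsing statement. Let $\bx=\{\tx_n\}$ represent a point of $\AM_{\omega}(S)$, so $d_{\AM(S)}(\tmu_n,\tx_n)\le Rs_n$ for $\omega$-a.e.\ $n$. Since $\frac{D_{\alpha_{n,i}}(\tmu_n)}{s_n}\underset{\omega}{\longrightarrow}\infty$ while Theorem~\ref{distance} gives $d_{\HHH_{\alpha_{n,i}}}(\tmu_n,\tx_n)=O(s_n)$, and the height function on $\HHH(\ZZ)$ is $1$-Lipschitz, we get
\[
D_{\alpha_{n,i}}(\tx_n)\ \geq\ D_{\alpha_{n,i}}(\tmu_n)-d_{\HHH_{\alpha_{n,i}}}(\tmu_n,\tx_n)\ \underset{\omega}{\longrightarrow}\ \infty ;
\]
in particular, for $\omega$-a.e.\ $n$, $\alpha_{n,i}\in\base(\tx_n)$ and $\alpha_{n,i}$ is so short in both $\tmu_n$ and $\tx_n$ that any curve crossing it — in particular any component of $\partial W$ crossing it — is too long to be short. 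Hence if $W$ interlocks some $Y_{\alpha_{n,i}}$, then $d_{\HHH_{\alpha_{n,i}}}(\tmu_n,\partial W)$ and $d_{\HHH_{\alpha_{n,i}}}(\tx_n,\partial W)$ exceed the Behrstock constant $M_1$ for $\omega$-a.e.\ $n$, so Proposition~\ref{beh ineq} forces $d_W(\tmu_n,\partial Y_{\alpha_{n,i}}),\,d_W(\tx_n,\partial Y_{\alpha_{n,i}})<M_1$ and therefore $d_W(\tmu_n,\tx_n)\prec 1$; the same reasoning bounds $d_{\HHH_\beta}(\tmu_n,\tx_n)\prec 1$ for any annulus $Y_\beta$ interlocking some $Y_{\alpha_{n,i}}$. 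If instead $W$ contains some $\alpha_{n,i}$ as an essential curve, then $\alpha_{n,i}$ lies in both $\pi_W(\tmu_n)$ and $\pi_W(\tx_n)$, so $d_W(\tmu_n,\tx_n)\le 2$. I expect this step to be the main obstacle, and it is exactly here that it matters that $\mathbf{A}(\bmu)$ counts curves whose coarse length \emph{dominates} $s_n$ rather than merely tending to infinity: this is what keeps $\tx_n$ in $Q(\Delta_n)$ in the $i$-th annular direction and lets Behrstock's inequality eliminate every interlocking direction.

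Combining the collapsing statement for two points $\bx,\by$ with the triangle inequality, every term in the $\AM(S)$ distance formula (Theorem~\ref{distance}) for $d_{\AM(S)}(\tx_n,\ty_n)$ attached to a subsurface that interlocks or contains some $\alpha_{n,i}$, and every annular term over an annulus interlocking some $Y_{\alpha_{n,i}}$, is $\prec 1$, hence drops out once the threshold $K$ is large. Every surviving nonannular $W$ and surviving annulus lies inside a unique $Y_{n,j}$, apart from the $\mathbf{A}(\bmu)$ terms $d_{\HHH_{\alpha_{n,i}}}$, which are kept; and since subsurface and horoball projections are compatible with the projections $\pi_{\AM(Y_{n,j})}$ (the coarse length coordinates being preserved by definition), regrouping and applying the distance formula on each $\AM(Y_{n,j})$ gives, with uniform constants,
\[
d_{\AM(S)}(\tx_n,\ty_n)\ \asymp\ \sum_j d_{\AM(Y_{n,j})}\big(\pi_{\AM(Y_{n,j})}\tx_n,\ \pi_{\AM(Y_{n,j})}\ty_n\big)\ +\ \sum_{i=1}^{\mathbf{A}(\bmu)} d_{\HHH_{\alpha_{n,i}}}\big(\pi_{\HHH_{\alpha_{n,i}}}\tx_n,\ \pi_{\HHH_{\alpha_{n,i}}}\ty_n\big).
\]
Dividing by $s_n$ and taking $\lim_{\omega}$ kills the additive error, so $\Pi$ is a bi-Lipschitz embedding. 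For surjectivity, represent a target point by sequences $z_{n,j}\in\AM(Y_{n,j})$ and $w_{n,i}\in\HHH_{\alpha_{n,i}}$ lying at finite cone-distance from $\tmu_{n,j}$ and $\mathbf{x}_i$, let $\tx_n\in Q(\Delta_n)$ be a coarse $\Theta$-preimage of $\big((z_{n,j})_j,(w_{n,i})_i\big)$, and use Lemma~\ref{dist to prod}: since $\Theta(\tmu_n)$ coarsely equals $\big((\tmu_{n,j})_j,(\mathbf{x}_i)_i\big)$ and each coordinate has moved only $O(s_n)$, we get $d_{\AM(S)}(\tmu_n,\tx_n)=O(s_n)$, so $\bx=\{\tx_n\}$ represents a point of $\AM_{\omega}(S)$ with $\Pi(\bx)$ the given point. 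Hence $\Pi$ is a bi-Lipschitz homeomorphism, as claimed; note that each $\AM_{\omega}(\mathbf{Y}_j)$ is a thick cone, which is what feeds the induction for the Dimension Theorem in the thin case.
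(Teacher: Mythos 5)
Your proposal is correct and follows the same route as the paper: the decisive step in both is that for any point $\bx$ of the cone the distance formula forces $D_{\alpha_{n,i}}(\tx_n)/s_n\underset{\omega}{\longrightarrow}\infty$, so every point is represented by a sequence lying (deep) in $Q(\Delta_n)$. The only difference is that the paper then concludes in one line by citing the product-region quasiisometry (Lemma \ref{prod reg}, with Lemma \ref{dist to prod}) and the fact that asymptotic cones commute with finite products, whereas you re-derive that product decomposition by hand (explicit projection map, collapsed distance formula via Behrstock's inequality, and a surjectivity argument), which is a valid unpacking of the same lemmas.
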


\begin{proof}
For this proof, set $\AM_{\omega}(S) = \lim_{\omega}\left(\AM(S), \bmu, s_n\right)$.  Set $\Delta_n = \coprod_{1\leq i \leq \bold{A} (\bmu)} \alpha_{n,i}$.  Note that for each $n$, we have $\tmu_n \in Q(\Delta_n)$.\\

Let $\bnu = \{\tnu_n\}_n \subset \AM_{\omega}(S)$ be arbitrary.  We will show that $\tnu_n \in Q(\Delta_n)$ for $\omega$-a.e. $n$.  Since taking asymptotic cones commutes with products, Proposition \ref{prod reg} will then imply the result.\\

To show that $\tnu_n \in Q(\Delta_n)$, it suffices to show that $D_{\alpha_{n,i}}(\tnu_n) > 0$ for each $i$ and $\omega$-a.e. $n$.  We will show something much stronger, namely that for each $i=1, \dots, \bold{A}(\bmu)$,

$$\frac{D_{\alpha_{n,i}}(\tnu_n)}{s_n} \underset{\omega}{\longrightarrow} \infty$$

Suppose, for a contradiction, that this is not true for some $i$.  Then

$$\frac{d_{\AM(S)}(\tmu_n,\tnu_n)}{s_n} \geq \frac{\frac{1}{K} \cdot d_{\HHH_{\alpha_{n,i}}}(\tmu_n,\tnu_n)-C}{s_n} \geq \frac{\frac{1}{K} \cdot \left|D_{\alpha_{n,i}}(\tmu_n) - D_{\alpha_{n,i}}(\tnu_n)\right|-C}{s_n} \underset{\omega}{\longrightarrow} \infty$$
where $K,C>0$ are the constants from Theorem \ref{distance}.  This implies that $\bnu \notin \AM_{\omega}(S)$, which is a contradiction.
\end{proof}

A \emph{global cut point} of a space $X$ is any point $x \in X$ for which $X\hspace{.025in} \setminus \hspace{.025in} x$ has multiple connected components.  Products of connected spaces are free of cut points which is invariant under homeomorphism.  As a consequence of the product structure of thin cones, we immediately obtain the following corollary:  

\begin{corollary}[Thin cones have no global cut points] \label{no cut points}
Let $\bmu \subset \AM(S)$ be thin.  Then the asymptotic cone of $(\AM(S),\bmu,s_n)$ has no global cut points.
\end{corollary}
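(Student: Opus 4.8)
The plan is to read this off directly from the Classification of thin asymptotic cones. Since $\bmu$ is thin we have $\bold{A}(\bmu)>0$, so Theorem \ref{thin cones} produces a bi-Lipschitz homeomorphism from $\mathrm{Cone}_{\omega}(\AM(S),\bmu,s_n)$ onto the product
\[
P \;=\; \prod_j \AM_{\omega}(\mathbf{Y}_j)\;\times\;\prod_{i=1}^{\bold{A}(\bmu)} \bold{T}_i .
\]
Since "has no global cut point" is purely a statement about connectedness of complements of points, it is invariant under homeomorphism, and in particular under bi-Lipschitz homeomorphism. So it suffices to show that $P$ has no global cut point.

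The key step is to verify that $P$ genuinely splits as a product of at least two connected factors, each containing more than one point. Each $\bold{T}_i$ is the ultralimit of the rescaled horoballs $\HHH_{\alpha_{n,i}}$, which are geodesic spaces, so $\bold{T}_i$ is connected; and by Lemma \ref{r:horoball qi} each $\HHH_{\alpha_{n,i}}$ is quasiisometric to $\HH^2_{\geq 1}$, hence has infinite diameter, so there are points at distance $\asymp s_n$ from the basepoint $\mathbf{x}_i$ and $\bold{T}_i$ is not a single point. Similarly each $\AM_{\omega}(\mathbf{Y}_j)$ is an ultralimit of geodesic spaces, hence connected, and since $\mathbf{Y}_j$ is not a pair of pants $\AM(Y_{n,j})$ has infinite diameter, so $\AM_{\omega}(\mathbf{Y}_j)$ is not a single point. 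Finally, because $\bold{A}(\bmu)\ge 1$ there is always at least one tree factor, and there is a second nontrivial factor whenever $r(S)\ge 2$: either $\bold{A}(\bmu)\ge 2$ and we have two tree factors, or the curves $\alpha_{n,1},\dots,\alpha_{n,\bold{A}(\bmu)}$ do not fill out a pants decomposition and so $S\setminus\coprod_{i}\alpha_{n,i}$ has a nonpants component $\mathbf{Y}_j$, contributing a factor $\AM_{\omega}(\mathbf{Y}_j)$.

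The conclusion then follows from the elementary fact that a product $A\times B$ of two connected topological spaces, each containing more than one point, has no global cut point: given the deleted point $(a_0,b_0)$, one connects the complement by moving along "horizontal" slices $A\times\{b\}$ with $b\ne b_0$ and "vertical" slices $\{a\}\times B$ with $a\ne a_0$, none of which meet $(a_0,b_0)$, so $P\setminus\{p\}$ is connected. I do not expect a serious obstacle here; the only point that requires a moment's care is confirming that Theorem \ref{thin cones} really does exhibit $P$ with two or more nontrivial connected factors — this is exactly where the thinness hypothesis $\bold{A}(\bmu)>0$ is used, together with a trivial check that excludes only the lowest-complexity cases where $\TT(S)$ is itself a hyperbolic plane.
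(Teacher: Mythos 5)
Your proof is correct and is essentially the paper's own argument: the corollary is read straight off the product decomposition of Theorem \ref{thin cones}, using that freedom from global cut points is a homeomorphism invariant and that a product of (at least two nondegenerate) connected factors has connected complements of points. Your added care in verifying that there really are two nontrivial factors, and your exclusion of the $r(S)=1$ cases (where the thin cone degenerates to a single $\RR$-tree, which does have cut points), is a point the paper passes over silently but does not change the route.
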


In the next section, we prove that thick asymptotic cones consist entirely of cut points.  It will follow that thick and thin asymptotic cones are not homeomorphic; see Corollary \ref{not homeo}.\\

\section{$\RR$-trees in thick cones}

The goal of this section is to prove that each point $\bmu \in \AM_{\omega}(S)$ of a thick cone defines an $\RR$-tree to which there exists a locally constant retraction, Theorem \ref{Beh rtree} below.  These $\RR$-trees encode the uniformly thick directions emanating from $\bmu$.  Our approach uses the hierarchy machinery, following on the work in Behrstock \cite{Beh06} and Behrstock-Minsky \cite{BM08}.\\

One consequence of Theorem \ref{Beh rtree} is that every thick cone consists entirely of cut points.  In Subsection \ref{rel hyp and thick} below, we use the fact that to show that the Teichm\"uller metric is not strongly relatively hyperbolic.
 
\subsection{Sets of sublinear growth} \label{r:sublinear growth section}

For the rest of the paper, fix a thick base sequence $\bmu_0 \subset \AM(S)$ and its corresponding thick asymptotic cone, $\AM_{\omega}(S) = \underset{\omega}{\lim} \left(\AM(S), \bmu_0, s_n\right)$.\\

For any $\bmu \in \AM_{\omega}(S)$, the \emph{set of sublinear growth set of $\bmu$} is:

$$\Fmu = \left\{\bnu \hspace{.05in} \Big| \hspace{.05in} \underset{\omega}{\lim} \underset{Y \subsetneq S}{\sup} \frac{d_{\AM(Y)}(\tmu_n,\tnu_n)}{s_n} = 0\right\}$$

The following theorem mirrors Theorem 6.5 of \cite{Beh06}:

\begin{theorem}\label{Beh rtree}
Any two points $\bx, \by \in \Fmu$ are connected by a unique embedded path in $\AM_{\omega}(S)$ and this path lies in $\Fmu$.  In particular, $\Fmu$ is an $\RR$-tree.
\end{theorem}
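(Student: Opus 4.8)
The plan is to follow the template of Theorem 6.5 in \cite{Beh06} and \cite{BM08}, transported to the augmented setting. The strategy has two halves: first, construct \emph{a} path between $\bx$ and $\by$ inside $\Fmu$ by taking an ultralimit of augmented hierarchy paths; second, show this path is the \emph{unique} embedded path between them in $\AM_\omega(S)$, which simultaneously forces $\Fmu$ to be geodesic, uniquely geodesic, and (being connected and uniquely arcwise connected) an $\RR$-tree.

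For existence, I would pick representatives $\bx = \{\tx_n\}$, $\by = \{\ty_n\}$ and choose augmented hierarchy paths $\Tgamma_n = [\tx_n,\ty_n]$; by Theorem \ref{ahp thm}(1) these are uniform quasigeodesics, so their ultralimit $\Tgamma_\omega$ is a bi-Lipschitz-embedded path from $\bx$ to $\by$ in $\AM_\omega(S)$. The key point is that $\Tgamma_\omega \subset \Fmu$. Fix $\bz = \{\tz_n\} \in \Tgamma_\omega$ with $\tz_n \in \Tgamma_n$; I need $\lim_\omega \sup_{Y \subsetneq S} d_{\AM(Y)}(\tz_n, ?)/s_n = 0$ relative to the basepoint $\bmu$ of the cone, but it suffices (after noting $\bx, \by \in \Fmu$, so both have sublinear growth relative to $\bmu$) to bound $\sup_Y d_{\AM(Y)}(\tx_n, \tz_n)/s_n$. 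By Lemma \ref{dist rel to endpoints}, $d_{\AM(Y)}(\tx_n,\tz_n) \prec d_{\AM(Y)}(\tx_n,\ty_n)$ for every $Y \subsetneq S$, with constants uniform in $Y$; since $\bx, \by \in \Fmu$ the right side is sublinear in $s_n$ uniformly over $Y$, hence so is the left. Thus $\bz \in \Fmu$ and $\Tgamma_\omega \subset \Fmu$, giving a path in $\Fmu$ joining any two of its points; in particular $\Fmu$ is path-connected.

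For uniqueness, I would argue by contradiction: suppose $p, q$ are two distinct embedded paths from $\bx$ to $\by$ in $\AM_\omega(S)$. They enclose a point $\bw$ lying on one but reachable from the other only through a ``detour,'' and the standard Behrstock argument produces a subsurface $Y \subsetneq S$ that is ``active'' — i.e. an augmented hierarchy path realizing part of $p$ must pass through $Q(\partial Y)$ and make definite (linear in $s_n$) progress in $\AM(Y)$ — while the points of $q$ do not see this progress. The mechanism is: the large links lemma (Lemma \ref{large link}) forces any hierarchy path between points that differ by $\succ s_n$ in some $d_Y$ to pass through markings with $\partial Y$ in the base, Behrstock's inequality (Proposition \ref{beh ineq}) plus the active segments lemma (Lemma \ref{r:active segment}) control how this interacts with competing subsurfaces, and the distance formula (Theorem \ref{distance}) converts back to a linear-in-$s_n$ lower bound on $d_{\AM(S)}$. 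But any point of $\Fmu$ has sublinear growth in \emph{every} proper subsurface, so no point of $\Fmu$ — in particular neither $\bx$ nor $\by$ — can witness such linear progress, contradicting that both $p$ and $q$ run between $\bx$ and $\by$. Hence the embedded path is unique. A uniquely arcwise-connected, path-connected geodesic space is an $\RR$-tree, which is the conclusion.

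The main obstacle is the uniqueness half: making the ``two paths enclose an active subsurface'' step rigorous in the cone. One must carefully pass between the cone picture (two topological arcs) and the sequence picture (augmented hierarchy paths $\Tgamma_n$ whose ultralimits are these arcs), extract the offending subsurface sequence $\bY = \{Y_n\}$ with $Y_n \subsetneq S$, and verify that the progress $d_{Y_n}$ is linear in $s_n$ rather than merely unbounded — this is exactly where the uniformity of constants in Lemmas \ref{large link}, \ref{r:active segment}, and \ref{dist rel to endpoints} (all depending only on $S$) is essential, and where the definition of $\Fmu$ as a $\sup$ over \emph{all} $Y \subsetneq S$ does the work. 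I expect the bookkeeping for competing/overlapping subsurfaces — handled in \cite{Beh06} via Behrstock's inequality and in our setting additionally via part (3) of Lemma \ref{r:active segment} (disjoint active segments when boundaries intersect) — to be the most delicate part of the write-up.
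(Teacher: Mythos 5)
Your existence half matches the paper: ultralimit of augmented hierarchy paths, with Lemma \ref{dist rel to endpoints} (plus Lemma \ref{lip proj}, Theorem \ref{distance}, and the triangle inequality back to $\bmu$) giving $[\bx,\by]\subset\Fmu$. The gap is in the uniqueness half. Your plan --- take a second embedded path $q$, find a point it ``encloses,'' and extract along $q$ a subsurface sequence $\{Y_n\}$ with linear progress --- cannot be carried out as stated, because a competing embedded path in $\AM_{\omega}(S)$ is an arbitrary topological arc in the cone: its points are ultralimits of arbitrary sequences, it is not the ultralimit of any family of (augmented) hierarchy paths, and there is no ``active subsurface along $q$'' to extract. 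Lemmas \ref{large link} and \ref{r:active segment} apply to hierarchy paths, not to arbitrary arcs, so the sequence-level machinery never gets a foothold on $q$. Moreover, the contradiction you aim for is misplaced: points of $q$ other than $\bx,\by$ are under no obligation to have sublinear subsurface projections relative to $\bmu$, so exhibiting linear growth in some $d_{Y_n}$ along $q$ would contradict nothing; the sublinearity hypothesis only constrains the endpoints and the path $[\bx,\by]$ itself.

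The missing idea, which is the bulk of the paper's proof, is to convert the statement about arbitrary paths into a local one by constructing a continuous retraction $\Phi:\AM_{\omega}(S)\rightarrow[\bx,\by]$ that is the identity on $[\bx,\by]$ and locally constant off it: for each $n$ one projects $\tnu_n$ to the main geodesic $g_{H_n}\subset\CC(S)$ of the hierarchy $H_n$ underlying $[\tx_n,\ty_n]$, lets $\HPhi(\tnu_n)$ be the markings of $[\tx_n,\ty_n]$ whose bases contain a curve of that closest-point set, and checks (via Theorem \ref{distance}, Lemma \ref{dist rel to endpoints}, and the sublinearity defining $\Fmu$) that $\mathrm{diam}(\HPhi(\tnu_n))/s_n\to 0$, so $\Phi$ is well defined. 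Local constancy off $[\bx,\by]$ is then proved for pairs $\bnu,\bnu'$ with $d(\tnu_n,\tnu_n')<c_n\, d(\tnu_n,\HPhi(\tnu_n))$ by the two-case analysis (curve-complex distance dominant, versus proper-subsurface large links handled with initial domains, active segments covering $\Tgamma_n$ with degree $\leq 2\xi(S)$, Behrstock's inequality, and Theorem \ref{bgit}), yielding $|\Tlambda_n|\succ d_{\AM(S)}(\tnu_n,\HPhi(\tnu_n))$ and the choice of $c_n$. Only with this retraction in hand does one conclude that no path can join $\bx$ to $\by$ in the complement of $[\bx,\by]$, hence that the embedded path is unique. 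Your sketch gestures at the same ingredients but deploys them along the wrong object (the competing path rather than hierarchy paths from a point to its projection), so as written the uniqueness argument does not go through.
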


Since Behrstock's arguments rely heavily on the distance formula and the hierarchy machinery, much of his proof of Theorem 6.5 passes through to our setting unchanged.  For the sake of completeness, we give a sketch of the argument, detailing adaptations where necessary.

\begin{proof}[Proof of Theorem \ref{Beh rtree}]

Let $\bmu \in \AM_{\omega}(S)$ and let $\bx, \by \in \Fmu$.  For each $n$, let $[\tx_n, \ty_n]$ be an augmented hierarchy path between $\tx_n$ and $\ty_n$ based on a hierarchy $H_n$.   Let $[\bx,\by]$ denote the bi-Lipschitz path in $\AM_{\omega}(S)$ which is the ultralimit of $[\tx_n,\ty_n]$.\\

We first observe that $[\bx,\by] \subset \Fmu$.  The definition of $\Fmu$ and the triangle inequality implies that 

\begin{equation} \label{eqn 1}
\underset{\omega}{\lim} \sup_{Y\subsetneq S} \frac{d_{\AM(Y)}(\tx_n,\ty_n)}{s_n} = 0
\end{equation}

Thus $\Fmu = \mathbf{F}_{\bx} = \mathbf{F}_{\by}$.  Using Behrstock's argument, it follows easily from Theorem \ref{distance} and Lemmas \ref{dist rel to endpoints} and \ref{lip proj} that $[\bx,\by] \subset \Fmu$.\\

We now define a map $\Phi:\AM_{\omega}(S) \rightarrow [\bx,\by]$.  The goal is to prove that this map is a locally constant retraction, from which it follows that $\bx$ and $\by$ cannot be connected in the complement of $[\bx,\by]$ by any path.  This implies that any embedded path between $\bx$ and $\by$ must coincide with $[\bx,\by]$, proving the theorem.\\

Fix any $\bnu \in \AM_{\omega}(S)$.  For each $n$, let $g_{H_n} \subset \CC(S)$ denote the main geodesic of $H_n$.  Let $C_{\tnu_n} \subset g_{H_n}$ be the closest point projection of $\tnu_n$ to $g_{H_n}$.  Since $\CC(S)$ is uniformly hyperbolic, $diam_{\CC(S)}(C_{\tnu_n}) < K$ for some $K$.  Define $\widehat{\Phi}(\tnu_n)$ to be the set of all augmented markings in $[\tx_n,\ty_n]$ whose bases contain any curve in $C_{\tnu_n}$.\\

Set $\Phi(\bnu) = \underset{\omega}{\lim} \hspace{.05in} \widehat{\Phi}(\tnu_n)$.  It follows from Theorem \ref{distance}, Lemma \ref{dist rel to endpoints}, and Equation \ref{eqn 1} that

$$\underset{\omega}{\lim} \frac{diam_{\AM(S)}(\widehat{\Phi}(\tnu_n))}{s_n} = 0$$

Thus $\Phi$ is well-defined.  The majority of the proof involves showing that $\Phi$ is locally constant off of $[\bx,\by]$.\\

Suppose that $\bnu \in \AM_{\omega}(S) \hspace{.05in} \setminus \hspace{.05in} [\bx,\by]$.  We prove that there is a sequence $c_n>0$ depending only on $S$ such that if $\bnu' \in \AM_{\omega}(S)$ satisfies $d_{\AM(S)}(\tnu_n, \tnu'_n) < c_n \cdot d_{\AM(S)}(\tnu_n, \Phi(\tnu_n))$, then $\Phi(\bnu) = \Phi(\bnu')$.\\

For each $n$, let $\Tgamma_n$ be an augmented hierarchy path between any point in $\HPhi(\tnu_n)$ and $\tnu_n$ based on a hierarchy $G_n$.  Since $\Phi$ is well-defined, our choice of point in $\HPhi(\tnu_n)$ does not matter.  Let $\Tlambda_n$ be an augmented hierarchy path between $\tnu_n$ and $\tnu'_n$ based on a hierarchy $L_n$.\\  

For each $n$, let $D_n = \underset{Y \subsetneq S}{\sup} d_{\AM(Y)}(\tx_n,\ty_n)$.  Let $K_n$ be the quasiisometry constant coming from Theorem \ref{distance} for a threshold of $t_n>4 \cdot M + D_n$, where $M$ is the constant from Lemma \ref{beh ineq}.  Note that $\underset{\omega}{\lim} \frac{K_n}{s_n} = 0$.\\

Following on \cite{Beh06}, we break the proof into two cases:\\

\paragraph{\textbf{Case (1)}} For $\omega$-a.e. $n$, we have $|g_{G_n}| > \frac{1}{K_n} d_{\AM(S)}(\tnu_n, \HPhi(\tnu_n)) + K_n$.\\

That is, the distance in $\AM(S)$ between $\tnu_n$ and $\HPhi(\tnu_n)$ is coarsely dominated by curve complex distance.  Using Theorem \ref{distance} and hyperbolicity of $\CC(S)$, Behrstock shows that one can choose $c_n>0$ small enough so that for $\omega$-a.e. $n$, there is a ball in $\CC(S)$ containing both $\tnu_n$ and $\tnu'_n$ which is disjoint from $g_{H_n}$, implying $\Phi(\bnu) = \Phi(\bnu')$ by definition.  This works in $\AM(S)$ as well.\\

\paragraph{\textbf{Case (2)}} For $\omega$-a.e. $n$, we have $|g_{G_n}| \leq \frac{1}{K_n} d_{\AM(S)}(\tnu_n, \HPhi(\tnu_n)) + K_n$.\\

In this case, the choice of $K_n$ implies that for $\omega$-a.e. $n$, the hierarchy $G_n$ will have some domain $Y_n \subsetneq S$ for which $d_{Y_n}(\tnu_n, \HPhi(\tnu_n)) > 4M + D_n$.  The idea of Behrstock's proof in the case of $\MM(S)$ goes through to $\AM(S)$ without trouble: If we can show that any augmented hierarchy path from $\tnu'_n$ to any point on $[\tx_n,\ty_n]$ passes through one of the $Y_n$, then it follows from the definition of $\Fmu$ and Theorem \ref{bgit} that the base geoedesic of such a path must intersect $g_{G_n}$ at $\partial Y_n$, with hyperbolicity of $\CC(S)$ and the definition of $\Fmu$ implying that $\Phi(\bnu) = \Phi(\bnu')$.  In order for this not to happen, $\tnu'_n$ and $\HPhi_n(\tnu_n)$ would have to be close in $\CC(Y_n)$ for each such $Y_n$, making $d_{Y_n}(\tnu_n,\tnu'_n) \asymp d_{Y_n}(\tnu_n,\HPhi(\tnu_n))$.  We then find a finite collection of $(4M + D_n)$-large links for $G_n$ whose lengths bound $|\Tgamma_n|$ from above and which (at worst) covers $\Tgamma_n$ by a bounded degree.  By making some estimates using Theorem \ref{distance}, Lemma \ref{beh ineq}, and general properties of hierarchies, we can then bound $d_{\AM(S)}(\tnu_n, \HPhi(\tnu_n))$ from above as a fraction of $|\Tlambda_n|$.  Since $\Tlambda_n$ is a uniform quasigeodesic, choosing $c_n$ sufficiently small produces a contradiction.  We now sketch both how to obtain the finite collection of subsurfaces and the estimates mentioned above.\\

Let $Y \subsetneq S$ be a $(4M+D_n)$-large link for the hierarchy $G_n$ between $\tnu_n$ and $\HPhi(\tnu_n)$, i.e. $Y \in  \mathcal{L}_{4M+D_n}(\tnu_n,\HPhi(\tnu_n))$.  Suppose also that $Y$ is also a domain in $L_n$.\\

For any $Y \subsetneq S$, recall our definition of active segment, $\Tlambda_{n,Y} \subset \Tlambda_n$.  For us, these will play the role of the $J_{i,Y}$ from \cite{Beh06}[p. 1570], as Lemma \ref{r:active segment} shows they have the required properties of the $J_{i,Y}$.\\

We say that $\Tlambda_n$ \emph{has traversed $Y$} if $d_Y(\tnu'_n, \HPhi(\tnu_n)) < M$.  This notion is measuring the progress that $\Tlambda_n$ makes along $Y$ relative to how much progress $\Tgamma_n$ must make along $Y$.\\

Let $Y_n \in \mathcal{L}_{4M + D_n}(\tnu_n,\HPhi(\tnu_n))$ be such that if $Z \in \mathcal{L}_{4M + D_n}(\tnu_n,\HPhi(\tnu_n))$ and $Y_n \pitchfork Z$, then $Y_n \prec_t Z$.  We call $Y_n$ an \emph{initial domain} of $G_n$ and the collection of initial domains are the first $(4M+D_n)$-large links through which any augmented hierarchy path based on $G_n$ passes.  A simple topological count proves that the set of initial domains has cardinality at most $\xi(S)$.  The set 

\[\mathcal{I}_n = \{Y_n\} \cup \{Z \in \mathcal{L}_{4M+D_n}(\tnu_n,\HPhi(\tnu_n)) | Y_n \prec_t Z\}\]
collects all the subsurfaces whose active segments necessarily follow the active segment for $Y_n$ along $\Tgamma_n$.   Note that $\mathcal{I}_n$ is nonempty by the assumption.  Since the proof of \cite{Beh06}[Lemma 6.6] is easily seen to hold in our setting, the active segments of the subsurfaces in $\mathcal{I}_n$ cover $\Tgamma_n$ with degree bounded by $2\xi(S)$.\\

Using Behrstock's argument and Lemma \ref{r:active segment}, there is an $\alpha_n>0$ depending only on $S$ such that

\[\sum_{Z \in \mathcal{I}_n} d_Z (\tnu_n, \HPhi(\tnu_n)) > \alpha_n d_{\AM(S)} (\tnu_n, \HPhi(\tnu_n)) \indent \text{ and } \indent 2k\xi(S) \geq \sum_{Z \in \mathcal{I}_n} |\Tgamma_{n,Z}|\]
for $\omega$-a.e. $n$, where $\alpha$ depends only on $S$ and $d_Z = \HHH_{\gamma}$ when $Z$ is an annulus with core $\gamma$.\\

If $\Tlambda_n$ has traversed $Y$, then Lemma \ref{r:active segment} implies that there is a uniform $\beta>0$ such that $|\Tlambda_{n,Y}|> \beta \cdot d_{Y}(\tnu_n,\HPhi(\tnu_n))$.  Thus we have

$$2|\Tlambda_n| \xi(S) \geq \sum_{Z \in \mathcal{I}_n} |\Tgamma_{n,Z}| \geq \sum_{Z \in \mathcal{I}_n} \beta d_{Z}(\tnu_n, \HPhi(\tnu_n)) \geq \alpha_n \beta d_{\AM(S)}(\tnu_n, \HPhi(\tnu_n))$$
implying that $|\Tlambda_n| \geq \frac{\alpha_n \beta d_{\AM(S)}(\tnu_n,\HPhi(\tnu_n))}{2\xi(S)}$.  Choosing $c_n \leq \frac{\alpha_n \beta}{2\xi(S)}$ completes the proof.
\end{proof}

The following is a slight enhancement of Theorem \ref{Beh rtree}, mirroring Theorem 3.4 of \cite{BM08}:

\begin{theorem}\label{rtree}
Given $\bmu \in \AM_{\omega}(S)$, there is a continuous map:

$$\wp= \wp_{\bmu}: \AM_{\omega}(S) \rightarrow \Fmu$$

with the following properties:

\begin{enumerate}
\item $\wp$ is the identity on $\Fmu$.
\item $\wp$ is locally constant in $\AM_{\omega}(S) \setminus \Fmu$.
\end{enumerate}

\end{theorem}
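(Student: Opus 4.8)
The plan is to build the retraction $\wp_{\bmu}$ globally by gluing together, over all pairs of points in $\Fmu$, the locally constant retractions $\Phi$ already produced in Theorem \ref{Beh rtree}. Concretely, fix $\bmu$ and, for each ordered pair $\bx, \by \in \Fmu$, let $[\bx,\by] \subset \Fmu$ be the unique embedded path and $\Phi_{\bx,\by}: \AM_{\omega}(S) \to [\bx,\by]$ the retraction from the proof of Theorem \ref{Beh rtree}. The first step is to check these are compatible: if $\bz \in [\bx,\by]$, then $\Phi_{\bx,\bz}$ and $\Phi_{\bx,\by}$ agree on the preimage $\Phi_{\bx,\by}^{-1}([\bx,\bz])$, which follows because both are defined by the same closest-point-projection-to-$g_{H_n}$ recipe and $\Fmu$ being an $\RR$-tree means initial subsegments of hierarchy paths between points of $\Fmu$ are (coarsely) initial subsegments of the longer ones; here Lemma \ref{dist rel to endpoints} and the $\RR$-tree structure do the bookkeeping. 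Once compatibility is in hand, one defines $\wp(\bnu)$ by picking any $\bx \in \Fmu$ and setting $\wp(\bnu) = \lim_{\to} \Phi_{\bx,\by}(\bnu)$ as $\by$ ranges over $\Fmu$; the $\RR$-tree structure of $\Fmu$ guarantees this directed limit exists and is independent of the choice of $\bx$.

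Next I would verify property (1): if $\bnu \in \Fmu$, then for $\by$ chosen so that $\bnu \in [\bx,\by]$ (always possible in an $\RR$-tree), the definition of $\Phi_{\bx,\by}$ forces $\Phi_{\bx,\by}(\bnu) = \bnu$, since the closest point of $\tnu_n$ on its own hierarchy path $[\tx_n,\ty_n]$ — which passes coarsely through $\tnu_n$ because $\bnu \in [\bx,\by]$ — is $\tnu_n$ itself up to sublinear error. For property (2), local constancy in $\AM_{\omega}(S) \setminus \Fmu$ is inherited directly from the local constancy of each $\Phi_{\bx,\by}$ off $[\bx,\by]$: given $\bnu \notin \Fmu$, the distance $d_{\AM(S)}(\tnu_n, \Fmu)$ is comparable ($\omega$-a.e., up to sublinear terms) to $d_{\AM(S)}(\tnu_n, \Phi_{\bx,\by}(\tnu_n))$ for suitable $\by$, so the radius $c_n \cdot d_{\AM(S)}(\tnu_n, \Phi(\tnu_n))$ from the proof of Theorem \ref{Beh rtree} gives a neighborhood on which $\wp$ is constant. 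Continuity then follows since $\wp$ is the identity on the closed set $\Fmu$ and locally constant off it, with the two regimes matching up continuously along $\partial \Fmu$ by the same distance estimate.

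The main obstacle is the compatibility/well-definedness of the directed limit: one must know that enlarging $\by$ within $\Fmu$ does not change $\Phi_{\bx,\by}$ on points it has already ``decided'', and that the various projections cohere into a single continuous map rather than merely a consistent family. This is exactly the content of \cite{BM08}[Theorem 3.4] in the mapping class group setting, and the reason it works is that all the $\Phi_{\bx,\by}$ are assembled from the same data — closest-point projection to main geodesics of hierarchies — so the incompatibilities are controlled by hyperbolicity of $\CC(S)$ (Theorem \ref{bgit}) and the distance formula (Theorem \ref{distance}), exactly as in Theorem \ref{Beh rtree}. Since every ingredient Behrstock-Minsky use (hierarchy paths, large links, the distance formula, Behrstock's inequality) has been established for $\AM(S)$ above, the argument transfers, and I would simply indicate the adaptations and cite \cite{BM08}[Theorem 3.4] for the parts that are formally identical.
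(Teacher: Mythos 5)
There is a genuine gap at the heart of your construction, namely the step where you set $\wp(\bnu)=\lim_{\to}\Phi_{\bx,\by}(\bnu)$ ``as $\by$ ranges over $\Fmu$.'' The family of segments $\{[\bx,\by]\}_{\by\in\Fmu}$ is not directed under inclusion: $\Fmu$ is a branching $\RR$-tree, so for $\by_1,\by_2$ in different branches there is no $\by_3$ with $[\bx,\by_3]\supseteq[\bx,\by_1]\cup[\bx,\by_2]$, and the net of values does not converge --- for $\by$ ranging far out in a branch not containing the putative projection point of $\bnu$, the value $\Phi_{\bx,\by}(\bnu)$ sits at the relevant branch point rather than at that projection point. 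So the assertion that ``the $\RR$-tree structure of $\Fmu$ guarantees this directed limit exists'' is exactly the statement you would have to prove: that there is a single $\mathbf{p}\in\Fmu$ with $\Phi_{\bx,\by}(\bnu)=\mathbf{p}$ for every $\by$ with $\mathbf{p}\in[\bx,\by]$, and that this $\mathbf{p}$ controls all the other values. Likewise your compatibility claim (that $\Phi_{\bx,\bz}$ and $\Phi_{\bx,\by}$ agree on $\Phi_{\bx,\by}^{-1}([\bx,\bz])$) compares closest-point projections of $\tnu_n$ to the main geodesics of \emph{different} hierarchies; hierarchy paths are not unique and $[\tx_n,\tz_n]$ need not fellow-travel an initial segment of $[\tx_n,\ty_n]$ without a hyperbolicity argument in $\CC(S)$, so this is not mere bookkeeping via Lemma \ref{dist rel to endpoints}. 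Your appeal to \cite{BM08}[Theorem 3.4] does not close these gaps, because the proof there (the one reproduced in this paper) is not a gluing of the maps from Theorem \ref{Beh rtree} at all; your continuity claim at points of $\Fmu$ would similarly need an unproven estimate of the form $d(\bnu,\wp(\bnu))\asymp d(\bnu,\Fmu)$.

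The paper's argument is much softer and uses only the \emph{statement} of Theorem \ref{Beh rtree}: given $\bx$, take any path $\gamma$ from $\bx$ to $\Fmu$ and define $\wp(\bx)$ to be the entry point of $\gamma$ into $\Fmu$. If two paths entered at distinct points, concatenating them would yield an embedded path between two points of $\Fmu$ lying outside $\Fmu$, contradicting the uniqueness in Theorem \ref{Beh rtree}; hence $\wp$ is well defined, is clearly the identity on $\Fmu$, and is locally constant off $\Fmu$ and continuous because $\AM_{\omega}(S)$ is locally path connected. If you wish to keep your projection-gluing strategy, you must actually prove the stabilization statement above (essentially constructing a coarse nearest-point projection to $\Fmu$); otherwise the entry-point definition yields the theorem in a few lines from what is already established.
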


Since we use the map $\wp$ extensively in the rest of the paper, we give the short proof from \cite{BM08}, half of which is defining $\wp$.
 
\begin{proof}
Fix $\bx \in \AM_{\omega}(S)$ and let $\gamma$ be any path from $\bx$ to $\Fmu$ .  Let $\by$ be the entry point of $\gamma$ into $\Fmu$.  Then $\by$ is independent of the choice of $\gamma$, for if $\bz$ is another point of entry for another path $\gamma'$ from $\bx$ to $\Fmu$, then $\gamma \cup \gamma'$ is a path between points of $\Fmu$ which lies outside of $\Fmu$, contradicting Theorem \ref{Beh rtree}.\\

Define $\wp(\bx) = \by$.  We clearly have that $\wp$ restricts to the identity on $\Fmu$.  Since $\AM_{\omega}(S)$ is locally path connected, $\wp$ is locally constant on $\AM_{\omega}(S)\setminus \Fmu$, for any point in a small ball $U$ around $\bx$ can first be connected to $\bx$ by a path lying in $U$.\\

Continuity of $\wp$ is immediate from the definition and the fact that $\AM_{\omega}(S)$ is locally path connected.\end{proof}

As noted in the previous section, we have the following immediate corollary of Theorem \ref{rtree}:

\begin{corollary} \label{not homeo}
Every point of a thick asymptotic cone is a cut point.  Thus thick asymptotic cones are not homeomorphic to thin asymptotic cones.
\end{corollary}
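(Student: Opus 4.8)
The plan is to derive both assertions from Theorem~\ref{Beh rtree} together with Corollary~\ref{no cut points}. Fix a thick cone $\AM_\omega(S)$ and a point $\bmu$ in it. By Theorem~\ref{Beh rtree} the set $\Fmu$ is an $\RR$-tree, and, crucially, \emph{any} two of its points are joined by a unique embedded path in the ambient cone $\AM_\omega(S)$. Hence if I can exhibit $\bx,\by\in\Fmu$ with $\bmu$ in the interior of the (unique) arc $[\bx,\by]\subset\Fmu$, then $\bmu$ must be a global cut point: since $\AM_\omega(S)$ is a geodesic, hence path-connected Hausdorff, metric space, any path from $\bx$ to $\by$ contains an embedded arc from $\bx$ to $\by$, and by the uniqueness clause of Theorem~\ref{Beh rtree} that arc equals $[\bx,\by]$, which passes through $\bmu$; so $\bx$ and $\by$ lie in distinct components of $\AM_\omega(S)\setminus\{\bmu\}$. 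Thus the whole statement reduces to producing, at every point $\bmu$, two independent sublinear-growth directions.

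To produce them I would argue as follows. In an $\RR$-tree, $\bmu$ lies strictly inside $[\bx,\by]$ precisely when $d(\bx,\bmu)+d(\bmu,\by)=d(\bx,\by)$ with both summands positive, so it suffices to find sequences $(\tx_n),(\ty_n)$ with $d_{\AM(Y)}(\tmu_n,\tx_n)=o(s_n)$ and $d_{\AM(Y)}(\tmu_n,\ty_n)=o(s_n)$ for every proper subsurface $Y\subsetneq S$ (so that $\bx,\by\in\Fmu$), while $d_{\AM(S)}(\tx_n,\tmu_n)\asymp d_{\AM(S)}(\tmu_n,\ty_n)\asymp s_n$ and $d_{\AM(S)}(\tx_n,\ty_n)\asymp 2s_n$. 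The natural candidate is to push $\tmu_n$ a distance $\asymp s_n$ in either direction along a bi-infinite geodesic in $\CC(S)$ whose subsurface projections are uniformly bounded and which passes within bounded distance of $\base(\tmu_n)$ --- for instance the axis of a pseudo-Anosov $\psi_n$ of uniformly bounded combinatorics, obtained by conjugating one fixed such pseudo-Anosov by a mapping class carrying a basepoint marking to $\mu_n$ (using coarse transitivity of $\MCG(S)$ on $\MM(S)$). Setting $\tx_n=\psi_n^{k_n}\tmu_n$, $\ty_n=\psi_n^{-k_n}\tmu_n$ with $k_n\asymp s_n$, any augmented hierarchy path between $\tmu_n$ and $\psi_n^{\pm k_n}\tmu_n$ shadows a segment of this thick geodesic, so the Bounded Geodesic Image Theorem~\ref{bgit} together with the distance formula (Theorem~\ref{distance}) keeps every proper-subsurface contribution uniformly bounded while forcing the $\CC(S)$-term, and hence $d_{\AM(S)}$, to grow like $k_n$. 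This gives $\bx,\by\in\Fmu$ with $\bmu$ strictly between them, so $\bmu$ is a cut point; as $\bmu$ was arbitrary, every point of a thick cone is a cut point.

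The non-homeomorphism then follows formally: any thick cone has more than one point (as $\AM(S)$ has infinite diameter) and, by the above, every one of its points is a global cut point, whereas Corollary~\ref{no cut points} says a thin cone has none; since possessing a global cut point is a homeomorphism invariant, no thick asymptotic cone is homeomorphic to any thin one. I expect the main obstacle to be the middle step: exhibiting, uniformly in $n$, the bounded-combinatorics geodesic near $\base(\tmu_n)$ and verifying through Theorems~\ref{distance} and~\ref{bgit} (together with the active-segment control of Lemma~\ref{r:active segment}) that translating $\tmu_n$ along it does not inflate any proper subsurface projection; once the two sublinear directions are in hand, the conclusion is a formal consequence of the uniqueness statement in Theorem~\ref{Beh rtree}.
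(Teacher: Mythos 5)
Your skeleton (reduce to exhibiting two points of $\Fmu$ with $\bmu$ between them, use the uniqueness clause of Theorem~\ref{Beh rtree} to force every path from $\bx$ to $\by$ through $\bmu$, then conclude via Corollary~\ref{no cut points} and homeomorphism-invariance of cut points) is exactly the content the paper leaves implicit when it records the corollary as ``immediate'' from Theorem~\ref{rtree}, and the non-homeomorphism half of your argument is fine. The genuine gap is in the middle step, and it is not the one you flagged. Thickness of a sequence only rules out \emph{superlinear} growth of the length coordinates, so a thick cone contains points $\bmu$ with $\lim_{\omega} D_{\alpha_n}(\tmu_n)/s_n = c > 0$ (for instance, perform $\lfloor c\,s_n \rfloor$ vertical moves on a base curve of the basepoint sequence; this stays at finite rescaled distance). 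At such a point your construction breaks: since $\psi_n^{k_n}$ moves $\alpha_n$ off the base, $D_{\alpha_n}(\tx_n)=0$ and hence $d_{\HHH_{\alpha_n}}(\tmu_n,\tx_n) \geq c\,s_n - o(s_n)$, so $\bx \notin \Fmu$. Worse, $\Fmu=\{\bmu\}$ there: any $\bnu \in \Fmu$ must keep $\alpha_n$ in its base with $D_{\alpha_n}(\tnu_n)>0$, so $\tmu_n,\tnu_n \in Q(\alpha_n)$, and Lemma~\ref{dist to prod} then bounds $d_{\AM(S)}(\tmu_n,\tnu_n)$ by the $\HHH_{\alpha_n}$-term plus $d_{\AM(S\setminus\alpha_n)}(\tmu_n,\tnu_n)$, both $o(s_n)$, forcing $\bnu=\bmu$. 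So no argument passing through Theorem~\ref{Beh rtree} can make such points cut points; indeed, by Lemmas~\ref{prod reg} and~\ref{dist to prod} they have neighborhoods bi-Lipschitz to open sets in a nondegenerate product when $\xi(S)\geq 2$, so they are not global cut points at all. What your construction can deliver is that every point whose length data is $o(s_n)$ is a cut point; that suffices for the second sentence of the corollary and for Corollary~\ref{r:thick}, but you should state the restriction rather than claim ``every point.''

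There is also a gap at the points where the construction does apply. Your criterion for $\bmu$ to lie in the interior of $[\bx,\by]$ is exact additivity $d(\bx,\bmu)+d(\bmu,\by)=d(\bx,\by)$, but your estimates only give comparability up to multiplicative constants, and $\asymp$ does not survive the ultralimit as an equality; with only coarse bounds $\bmu$ could a priori fail to lie on the arc. The repair is to show directly that some $\tz_n$ on the augmented hierarchy path $[\tx_n,\ty_n]$ satisfies $d_{\AM(S)}(\tz_n,\tmu_n)=o(s_n)$: choose $\tz_n$ whose shadow in $\CC(S)$ lies a bounded distance from $\base(\tmu_n)$; bounded combinatorics of $\psi_n$ together with Lemma~\ref{dist rel to endpoints} bound all nonannular proper projections and all annular twisting between $\tz_n$ and $\tmu_n$, so after raising the threshold in Theorem~\ref{distance} only boundedly many horoball terms (those with positive length coordinate) survive, each of size $o(s_n)$. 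You should also make explicit the external input that a bounded-combinatorics pseudo-Anosov has uniformly bounded proper subsurface projections along its orbit---Theorem~\ref{bgit} alone does not give this when $\partial Y$ comes close to the axis---and the small bookkeeping that passing from path components to components of the complement of $\bmu$ uses local path-connectedness of the cone, as in the proof of Theorem~\ref{rtree}. With these repairs your argument proves that thick cones contain cut points and hence are not homeomorphic to thin cones, which is the statement the paper actually uses downstream.
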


Before closing this section, we collect a few easy but important properties of $\wp$:

\begin{lemma}\label{prop of wp}
For any $\bmu \in \AM_{\omega}(S)$, we have:
\begin{enumerate}
\item For any $\bnu \in \Fmu$, we have $\wp_{\bmu} \equiv \wp_{\bnu}$. \label{wp 1}
\item  If $\wp(\bx) \neq \bmu$ for some $\bx \in \AM_{\omega}(S)$, then $\tx_n$ and $\tmu_n$ \emph{separate in} $\CC(S)$, that is $d_S(\tx_n, \tmu_n) \underset{\omega}{\longrightarrow} \infty$.\label{wp 2}
\item If $\wp(\bx) \neq \wp(\by)$, then $d_S(\tx_n, \ty_n)  \underset{\omega}{\longrightarrow} \infty$. \label{wp 3}
\end{enumerate}
\end{lemma}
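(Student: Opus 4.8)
The plan is to verify the three properties of $\wp$ directly from its construction, using Theorem \ref{Beh rtree} and the basic behavior of subsurface projections passing to the cone. Throughout, recall that for $\bx, \by \in \AM_\omega(S)$ we write $d_S(\tx_n, \ty_n) \to_\omega \infty$ to mean that the $\CC(S)$-distance of the underlying markings grows superlinearly in $s_n$; the key point is that, by the distance formula (Theorem \ref{distance}), this is the only subsurface term that survives in the cone coming from the main geodesic, whereas the sets $\Fmu$ are defined precisely by killing all proper-subsurface terms.

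For \eqref{wp 1}: if $\bnu \in \Fmu$, then as observed in the proof of Theorem \ref{Beh rtree} we have $\mathbf{F}_{\bnu} = \Fmu$, so the $\RR$-tree is literally the same set. Since $\wp_{\bmu}$ is characterized as the map sending $\bx$ to the unique entry point into $\Fmu$ of any path from $\bx$, and $\wp_{\bnu}$ is characterized identically with $\mathbf{F}_{\bnu}$ in place of $\Fmu$, the two maps agree. This step is essentially immediate.

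For \eqref{wp 2}: suppose $\bx$ is such that $\wp_{\bmu}(\bx) \neq \bmu$, yet $d_S(\tx_n, \tmu_n)$ stays linearly bounded, i.e. $\lim_\omega d_{\CC(S)}(\tx_n,\tmu_n)/s_n = 0$. I want to derive that $\bx \in \Fmu$, which would force $\wp_{\bmu}(\bx) = \bx$ — but I must also show $\bx = \bmu$, contradicting the hypothesis. The route is: combine the $\CC(S)$-smallness of $d_S(\tx_n,\tmu_n)$ with the fact that $\bmu \in \Fmu$ (so all proper subsurface projections $d_{\AM(Y)}(\tmu_n, \cdot)$ behave sublinearly relative to a reference in $\Fmu$), to conclude via the distance formula (Theorem \ref{distance}) together with the bounded geodesic image theorem (Theorem \ref{bgit}) and Behrstock's inequality (Proposition \ref{beh ineq}) that $d_{\AM(S)}(\tmu_n, \tx_n)/s_n \to_\omega 0$; here one uses that any large proper-subsurface projection $d_Y(\tmu_n,\tx_n)$ would, by the large links lemma (Lemma \ref{large link}) and Theorem \ref{bgit}, force the main geodesic of a hierarchy between $\mu_n$ and $x_n$ to travel near $\partial Y$, hence be long in $\CC(S)$ — contradicting the $\CC(S)$-smallness once one knows the contribution is uniformly thick. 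Thus $\bx = \bmu$ in the cone, so $\wp_{\bmu}(\bx) = \bx = \bmu$, the desired contradiction. The main obstacle here is this implication: controlling that small curve-complex distance together with membership of $\bmu$ in $\Fmu$ actually kills all the proper-subsurface terms in the distance formula; this is where I expect to lean hardest on Lemma \ref{large link}, Theorem \ref{bgit}, and the defining property of $\Fmu$, and it is the step requiring the most care.

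For \eqref{wp 3}: suppose $\wp_{\bmu}(\bx) \neq \wp_{\bmu}(\by)$. By \eqref{wp 1}, writing $\bx' = \wp_{\bmu}(\bx)$ and $\by' = \wp_{\bmu}(\by)$, both lie in $\Fmu$ and $\wp_{\bx'} \equiv \wp_{\by'} \equiv \wp_{\bmu}$. Now apply \eqref{wp 2} with basepoint $\bx'$: since $\wp_{\bx'}(\by) = \wp_{\bmu}(\by) = \by' \neq \bx'$, part \eqref{wp 2} gives $d_S(\ty_n, \tx'_n) \to_\omega \infty$. Applying \eqref{wp 2} again with basepoint $\bx'$ and the point $\bx$ (if $\bx \neq \bx'$; if $\bx = \bx'$ this term is just zero) gives $d_S(\tx_n, \tx'_n)$ bounded or $\to_\omega \infty$ depending; in either case one wants the triangle inequality in $\CC(S)$: $d_S(\tx_n,\ty_n) \geq d_S(\ty_n, \tx'_n) - d_S(\tx_n, \tx'_n)$. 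The subtlety is that a priori $d_S(\tx_n, \tx'_n)$ might also grow, so instead I would argue symmetrically. A cleaner route: since $\wp_{\bmu}$ restricted to $\Fmu$ is the identity and $\Fmu$ is an $\RR$-tree, and since $\wp$ is locally constant off $\Fmu$, the entry point $\wp_{\bmu}(\bx)$ is the nearest point of $\Fmu$ to $\bx$ along the tree, and the path $[\bx, \bx']$ meets $\Fmu$ only at $\bx'$; a path from $\bx$ to $\by$ must pass through both $\bx'$ and $\by'$, so $d_S(\tx_n,\ty_n)$ dominates $d_S(\tx'_n, \ty'_n)$, and $d_S(\tx'_n,\ty'_n) \to_\omega \infty$ because $\bx', \by'$ are distinct points of $\Fmu \subset \AM_\omega(S)$ whose separation, by the argument in \eqref{wp 2}, must be witnessed in $\CC(S)$ (any two distinct points of the cone at sublinear $\CC(S)$-distance would be identified with each other by the same argument showing $\bx = \bmu$). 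This last observation — that distinct points of $\Fmu$ necessarily separate in $\CC(S)$ — is really a restatement of \eqref{wp 2} applied with $\bmu$ replaced by one of them, so \eqref{wp 3} reduces to \eqref{wp 2} plus the triangle inequality; I would present it in that order.
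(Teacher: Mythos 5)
Part (1) of your proposal is fine and matches the paper. The problem is part (2), and it is structural rather than a matter of missing details. First, you have changed the statement: $d_S(\tx_n,\tmu_n)\underset{\omega}{\longrightarrow}\infty$ means the $\CC(S)$-distance tends to infinity, not that it grows (super)linearly in $s_n$; accordingly, the hypothesis you should be contradicting is ``$d_S(\tx_n,\tmu_n)\le N$ for a fixed $N$ and $\omega$-a.e.\ $n$,'' not ``$d_S(\tx_n,\tmu_n)/s_n\to 0$.'' Second, and more seriously, your route --- deduce $\bx\in\Fmu$ and then $\bx=\bmu$ --- cannot work, because the hypothesis $\wp(\bx)\neq\bmu$ is never used in that derivation; if the argument were valid it would show that \emph{every} point at bounded (or sublinear) $\CC(S)$-distance from $\bmu$ equals $\bmu$, which is false: let $\tx_n$ differ from $\tmu_n$ by a displacement of order $s_n$ supported in a single fixed proper subsurface or horoball. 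Then $d_S(\tx_n,\tmu_n)$ stays bounded, yet $\bx\neq\bmu$ and $\bx\notin\Fmu$ (such a point has $\wp(\bx)=\bmu$, so it does not contradict the lemma, but it does kill your intermediate claim). The mechanism you lean on is also false: a large projection $d_Y(\tmu_n,\tx_n)$ forces the main geodesic to pass within distance one of $\partial Y$ (Lemma \ref{large link}, Theorem \ref{bgit}), but it does not force that geodesic to be \emph{long} in $\CC(S)$ --- a large power of a Dehn twist produces an enormous annular projection with bounded curve-complex displacement. Note also that the defining condition of $\Fmu$ controls projections between $\tmu_n$ and points \emph{of} $\Fmu$; it gives no control whatsoever on the proper-subsurface projections of the arbitrary point $\tx_n$.

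The paper's proof avoids all of this by passing through the entry point $\bnu=\wp(\bx)\in\Fmu$. Step one: since $\bnu\neq\bmu$ and \emph{both} lie in $\Fmu$, if $d_S(\tnu_n,\tmu_n)<N$ $\omega$-a.e.\ for some fixed $N$, then applying Theorem \ref{distance} with threshold above $N$ kills the $\CC(S)$-term and forces proper-subsurface projections between $\tnu_n$ and $\tmu_n$ of linear order in $s_n$, contradicting the definition of $\Fmu$; this is exactly where membership of both points in $\Fmu$ and the uniform bound $N$ are needed, which is why the ``bounded'' negation matters. Step two: the divergence is transferred from $\bnu$ to $\bx$ using that an augmented hierarchy path from $\tx_n$ to $\tmu_n$ enters $\Fmu$ at $\wp(\bx)$, hence passes near $\tnu_n$, and monotonically shadows its main geodesic $g_n$ (Theorem \ref{ahp thm}), giving $d_S(\tx_n,\tmu_n)\asymp |g_n|\succ d_S(\tnu_n,\tmu_n)\underset{\omega}{\longrightarrow}\infty$. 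Your sketch of part (3) (any path from $\bx$ to $\by$ meets $\Fmu$ first at $\wp(\bx)$ and last at $\wp(\by)$, then use monotone shadows) is the right skeleton and close to what the paper intends, but as written it rests on the parenthetical claim that any two distinct points of the cone at sublinear $\CC(S)$-distance coincide --- false in general, true only for distinct points of $\Fmu$ --- and you justify that claim by the broken argument of (2). So parts (2) and (3) need to be rebuilt around the entry point $\wp(\bx)$ and the fact that distinct points of $\Fmu$ separate in $\CC(S)$.
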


\begin{proof}

Property (\ref{wp 1}) follows from the fact that $\Fmu = \Fnu$, which was shown in the proof of Theorem \ref{Beh rtree}.  Property (\ref{wp 3}) follows from Property (\ref{wp 2}) and the triangle inequality.\\

To see Property (\ref{wp 2}), let $\bnu = \wp(\bx)\in \Fmu$.  We claim that $d_{S}(\tnu_n, \tmu_n) \underset{\omega}{\longrightarrow} \infty$.  If not, then there is some $N>0$ such that $d_S(\tnu_n, \tmu_n) < N$ for $\omega$-a.e. $n$.  Since $\bnu \neq \bmu$, we can use a threshold bigger than $N$ in the distance formula (Theorem \ref{distance}) to conclude that $\tnu_n$ and $\tmu_n$ have a large link $Y_n$ with $d_{Y_n}(\tnu_n,\tmu_n) > K_n$ where $\underset{\omega}{\lim}\frac{K_n}{s_n} >0$, which violates the definition of $\Fmu$.\\

Let $\Gamma_n$ be any augmented hierarchy path between $\tx_n$ and $\tmu_n$ with base geodesic $g_n\subset \CC(S)$.  Since $\Gamma_n$ monotonically shadows $g_n$, the definition of $\wp_{\bmu}$ implies that $|g_n| \succ d_{S}(\tnu_n, \tmu_n)$ and Lemma \ref{large link} implies that $|g_n| \asymp d_S(\tx_n, \tmu_n)$.  Thus Property (\ref{wp 2}) holds.

\end{proof}

\subsection{Thickness and nonrelative hyperbolicity of $\TT(S)$} \label{rel hyp and thick}

We make a brief stop to observe a consequence of Theorem \ref{Beh rtree}, namely that the Teichm\"uller metric is not strongly relatively hyperbolic.\\

The notion of a relatively hyperbolic space is aimed to capture the geometry of spaces which behave like hyperbolic spaces outside of some controlled collection of subspaces, called the peripheral subspaces.  In \cite{MM99}, Masur-Minsky showed that the electrified Teichm\"uller space---that is, $(\TT(S),d_T)$ with all of its thin parts coned off--- is quasiisometric to $\CC(S)$, showing that $(\TT(S),d_T)$ is \emph{weakly relatively hyperbolic}.  The now-standard notion of relative hyperbolicity, sometimes called \emph{strong relative hyperbolicity}, includes Farb's bounded coset penetration property \cite{Farb98}, which, roughly speaking, requires that two geodesics with nearby endpoints outside of the peripheral subspaces must interact with the peripheral subspaces in essentially the same way.  See \cite{Raf14} for related properties of Teichm\"uller geodesics.\\

In \cite{BDM08}, Behrstock-Drutu-Mosher introduced \emph{thickness}, an inductively-defined property which measures how far away from being strongly relatively hyperbolic a group or space is.  We use their construction to show that $(\TT(S),d_T)$ is not strongly relatively hyperbolic.  We now briefly review their construction.

\begin{definition}[Networks of subspaces]
Let $X$ be a metric space, $\mathcal{Y}$ a collection of subspaces of $X$, and $\tau \geq 0$.  We say that $\mathcal{Y}$ is a \emph{$\tau$-network of subspaces of $X$} if 
\begin{itemize}
\item[$(N_1)$] $X = \bigcup_{Y \in \mathcal{Y}} \mathcal{N}_{\tau}(Y)$
\item[$(N_2)$] Any two elements $Y, Y' \in \mathcal{Y}$ can be \emph{thickly connected}: There exists $Y = Y_1, Y_2, \dots Y_n = Y'$ with $Y_i \in \mathcal{Y}$ and $\mathrm{diam}(\mathcal{N}_{\tau}(Y_i) \cap \mathcal{N}_{\tau}(Y_{i+1})) = \infty$ for each $1\leq i< n$.
\end{itemize}
\end{definition}

A metric space $X$ is called \emph{unconstricted} if, for some choice of ultrafilter and scaling sequence, every asymptotic cone of $X$ has no cut points.

\begin{definition}[Thickness]
Let $X$ be a metric space and $\mathcal{Y}$ a collection of subspaces.  We say $X$ is \emph{thick of order 1} if the following hold:
\begin{enumerate}
\item[$(T_1)$] $X$ is not unconstricted
\item[$(T_2)$] For some $\tau\geq 0$, $\mathcal{Y}$ is a $\tau$-network of subspaces for $X$ and every $Y \in \mathcal{Y}$ is unconstricted when endowed with the restricted metric of $X$.
\end{enumerate}
\end{definition}
Behrstock-Drutu-Mosher proved that thickness is a quasiisometry invariant (\cite{BDM08}[Remark 7.2]) and that thick groups and spaces are not strongly relatively hyperbolic (\cite{BDM08}[Corollary 7.9]).  As an application, they showed that $\MCG(S)$ is thick for $\xi(S)>1$.  The following is a corollary of Theorem \ref{Beh rtree}:

\begin{corollary} \label{r:thick}
For $\xi(S)>1$, $(\TT(S),d_T)$ is thick of order 1 and thus not strongly relatively hyperbolic.
\end{corollary}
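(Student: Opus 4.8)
The plan is to verify, directly for $\AM(S)$, the two axioms $(T_1)$ and $(T_2)$ in the definition of \emph{thick of order $1$}, and then to transfer the conclusion to $(\TT(S),d_T)$ using the quasiisometry of Theorem~\ref{r:aug qi} together with the quasiisometry invariance of thickness established in \cite{BDM08}; the statement about relative hyperbolicity is then immediate from \cite{BDM08}[Corollary 7.9]. So there are three things to carry out: checking $(T_1)$, exhibiting a network and checking $(T_2)$, and performing the transfer.

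\emph{Axiom $(T_1)$: $\AM(S)$ is not unconstricted.} Fix any nonprincipal ultrafilter and any scaling sequence $s_n \to \infty$, and let $\bmu_0$ be the constant sequence at a fixed augmented marking all of whose coarse length coordinates vanish. Then $\mathbf{A}(\bmu_0) = 0$, so $\bmu_0$ is a thick base sequence, and by Corollary~\ref{not homeo} (a consequence of Theorem~\ref{rtree}) every point of the resulting thick asymptotic cone is a cut point. Hence for no choice of ultrafilter and scaling sequence is every asymptotic cone of $\AM(S)$ free of cut points, which is precisely the failure of unconstrictedness.

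\emph{Axiom $(T_2)$: the network.} I would take $\mathcal{Y} = \{\, Q(\alpha) \mid \alpha \text{ a vertex of } \CC(S)\,\}$ and $\tau = 0$. For $(N_1)$: every augmented marking $\tmu$ has at least one base curve $\alpha$, so $\tmu \in Q(\alpha)$ and $\AM(S) = \bigcup_\alpha Q(\alpha)$. For $(N_2)$: since $\xi(S) > 1$, the complex $\CC(S)$ is connected and its edges record disjointness, so any two vertices $\alpha,\beta$ are joined by a path $\alpha = \alpha_0,\dots,\alpha_k = \beta$ in which $\alpha_i$ is disjoint from $\alpha_{i+1}$; for such a consecutive pair, $Q(\alpha_i) \cap Q(\alpha_{i+1}) = Q(\{\alpha_i,\alpha_{i+1}\})$, which has infinite diameter because, by Lemma~\ref{prod reg}, it is quasiisometric to a product of $\AM(Y)$'s that includes the two infinite-diameter horoball factors $\HHH_{\alpha_i}$ and $\HHH_{\alpha_{i+1}}$. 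This thickly connects any two elements of $\mathcal{Y}$. Finally, each $Q(\alpha)$ with its restricted metric is unconstricted: by Lemmas~\ref{prod reg} and~\ref{dist to prod} the restricted metric on $Q(\alpha)$ is quasiisometric to the sup metric on $\prod_{Y \in \sigma(\alpha)} \AM(Y)$, and since $\xi(S) > 1$ the surface $S\setminus\alpha$ has a component $W$ that is not a thrice-punctured sphere (complete $\alpha$ to a pants decomposition and use one of its remaining curves), so $\sigma(\alpha)$ contains both $Y_\alpha$ and $W$. Thus $Q(\alpha)$ is quasiisometric to a direct product of the two infinite-diameter geodesic spaces $\HHH_\alpha = \AM(Y_\alpha)$ and $\AM(W)$. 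Since asymptotic cones commute with finite products and the asymptotic cone of an infinite-diameter geodesic space is a nondegenerate geodesic space, every asymptotic cone of $Q(\alpha)$ is a product of two nondegenerate geodesic spaces, and such a product has no cut point. Hence $Q(\alpha)$ is unconstricted and $(T_2)$ holds.

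Putting these together, $\AM(S)$ is thick of order $1$; by quasiisometry invariance of thickness and Theorem~\ref{r:aug qi}, so is $(\TT(S),d_T)$, and \cite{BDM08}[Corollary 7.9] then gives that $(\TT(S),d_T)$ is not strongly relatively hyperbolic. The only step with real content beyond unpacking the definitions of \cite{BDM08} and quoting the thick/thin analysis is showing that each product region $Q(\alpha)$ is a \emph{nondegenerate} product — at least two unbounded factors — precisely when $\xi(S) > 1$; this relies on the distance-formula description of $Q(\alpha)$ in Lemma~\ref{prod reg} and the elementary observation that deleting a single curve from a surface of complexity greater than one always leaves a non-pants piece. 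The remaining ingredients — the network axioms, the cut-point count in thick cones from Theorem~\ref{rtree}, and the transport along the quasiisometry — are routine once this is in hand.
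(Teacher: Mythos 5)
Your proposal is correct and follows essentially the same route as the paper: non-unconstrictedness via the cut points in thick cones (Corollary~\ref{not homeo}), the $0$-network $\{Q(\alpha)\}_{\alpha\in\CC(S)}$ thickly connected through connectivity of $\CC(S)$, unconstrictedness of each $Q(\alpha)$ from the product structure of Lemma~\ref{prod reg}, and transfer to $(\TT(S),d_T)$ by quasiisometry invariance of thickness. The only difference is that you spell out the point the paper leaves implicit --- that for $\xi(S)>1$ the product $Q(\alpha)\approx\prod_{Y\in\sigma(\alpha)}\AM(Y)$ has at least two unbounded factors, which is genuinely needed for unconstrictedness --- and that detail is verified correctly.
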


\begin{proof}
It suffices to prove the theorem for $\AM(S)$.  Since thick cones consist entirely of cut points by Corollary \ref{not homeo}, $\AM(S)$ is not unconstricted.\\

Let $\mathcal{Y}= \{Q(\alpha)| \alpha \in \CC(S)\}$.  Note that $\AM(S) = \bigcup_{\alpha \in \CC(S)} Q(\alpha)$ and $Q(\alpha) \cap Q(\beta) = Q(\alpha \cup \beta)$ if $\alpha \cap \beta = \emptyset$, so $\mathcal{Y}$ is a 0-network of subspaces of $\AM(S)$ by connectivity of $\CC(S)$.  Moreover, each element of $\mathcal{Y}$ is  quasiisometric to a product space by Lemma \ref{prod reg}, so $\mathcal{Y}$ consists of unconstricted spaces.  Thus $\AM(S)$ is thick of order 1 by definition.
\end{proof}

\section{The Rank Theorem}

In this section, we prove the Dimension Theorem \ref{r:main intro}; the Rank Theorem \ref{r:rank intro} is an immediate corollary.  Our proof follows the general outline of \cite{BM08}.  Much of Behrstock-Minsky's approach goes through to $\AM(S)$ without issue.  As we did in the proof of Theorem \ref{Beh rtree}, we give sketches of proofs, adding details when some adaptation is necessary.  The proof of Theorem \ref{dim thm 2} has the following general outline.\\

Using results from dimension theory, the proof of the Rank Theorem is reduced to the following statement (see Subsection \ref{rank thick sect}): There is a family of subspaces $\mathcal{L}$ of $\AM_{\omega}(S)$ such that $\text{dim } L< \xi(S)$ for each $L \in \mathcal{L}$ and any two points in $\AM_{\omega}(S)$ can be separated by some $L \in \mathcal{L}$.\\

In Theorem \ref{global} of Subsection \ref{sep sect}, we use the product structures described in Subsection \ref{sep prod sect} and the locally-constant retractions constructed in Theorem \ref{rtree} to define nice retractions onto $\RR$-trees living in various $\AM(\bW)$, where $\bW$ is a sequence of proper subsurfaces arising as components of these product structures.\\

An easy inductive argument shows that any two sequences in $\AM_{\omega}(S)$ must diverge linearly in some subsurface curve complex.  In Theorem \ref{sep thm} of Subsection \ref{sep sect}, we use this fact, the hierarchy machinery, and the aforementioned retractions to the subsurface $\RR$-trees of Theorem \ref{global} to build the separators we want.

\subsection{Separating product regions}\label{sep prod sect}

For any sequence of subsurfaces $\bW = \{W_n\}$ and any $\bmu \in \AM_{\omega}(S)$, set

$$\Fwmu = \{\by \in \AM_{\omega}(\bW) | \lim_{\omega} \sup_{Z} \frac{1}{s_n} d_{\AM(Z)}(\tmu_n, \ty_n) = 0\}$$

Since $\bmu$ is a thick sequence, $\pi_{\AM(Y)}(\tmu_n)$ is a thick sequence in $\AM(Y)$ for any subsurface $Y \subset S$.\\

Theorems \ref{Beh rtree} and \ref{rtree} obviously hold with $S$ and $\Fmu$ replaced by $\bW$ and $\Fwmu$.\\

Recall from Proposition \ref{prod reg} of Subsection \ref{prod sect} that $Q(\partial Y) \approx \prod_{Z \in \sigma(\partial Y)} \AM(Z)$ for any subsurface $Y \subset S$.  It follows that for any sequence $\bW$ and $\bmu \in \AM_{\omega}(S)$, there is a naturally defined subset $Q_{\omega}(\partial \bW) \subset \AM_{\omega}(S)$ with the property that

$$Q_{\omega}(\partial \bW) \approx \prod_{Y \in \sigma(\partial \bW)} \AM(Y)$$
where $\AM(\bW)$ is one of the components since $\bW \in \sigma(\partial \bW)$ and the identification is a bi-Lipschitz homeomorphism.  By Lemma \ref{dist to prod}, given any $\bmu \in \AM_{\omega}(S)$, the distance from $\bmu$ to $Q_{\omega}(\partial \bW)$ is approximately

$$\rho(\bmu, \partial \bW) \asymp \lim_{\omega} \frac{1}{s_n} \sum_{Y \pitchfork W_n} \left[\left[d_{Y}(\tmu_n, \partial W_n)\right]\right]_K$$

Define $P_{\bW, \bmu} \subset Q_{\omega}(\partial \bW)$ to be all points whose $\AM_{\omega}(\bW)$ coordinate lies in $\Fwmu$.  Since the quasiisometry in Proposition \ref{prod reg} is defined via subsurface projections, we have the following characterization of $P_{\bW,\bmu}$:

\begin{lemma} \label{prop of P}
$P_{\bW,\bmu}$ is the set of points $\bx \in Q_{\omega}(\partial \bW)$ such that 

\begin{enumerate}
\item $\pi_{\AM_{\omega}(\bW)}(\bx) \in \Fwmu$
\item $\rho(\bx, \partial \bW) = 0$
\end{enumerate}
\end{lemma}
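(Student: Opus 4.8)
The plan is to unwind the definitions, since this lemma is essentially a bookkeeping statement about how the product decomposition of $Q_\omega(\partial\bW)$ interacts with the subset $P_{\bW,\bmu}$. Recall that $P_{\bW,\bmu}$ was \emph{defined} as the set of points $\bx\in Q_\omega(\partial\bW)$ whose $\AM_\omega(\bW)$-coordinate lies in $\Fwmu$; so condition (1) is immediate, and the entire content of the lemma is the claim that, for $\bx\in Q_\omega(\partial\bW)$, having the $\AM_\omega(\bW)$-coordinate in $\Fwmu$ is \emph{equivalent} to the conjunction of (1) and (2). In other words, I must show that every point of $Q_\omega(\partial\bW)$ whose $\bW$-coordinate lies in $\Fwmu$ automatically satisfies $\rho(\bx,\partial\bW)=0$, and conversely.

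First I would set up coordinates. Using Proposition~\ref{prod reg} (in its cone version, as stated just above), write $Q_\omega(\partial\bW)\approx\prod_{Y\in\sigma(\partial\bW)}\AM_\omega(Y)$, with $\AM_\omega(\bW)$ the distinguished factor. For $\bx\in Q_\omega(\partial\bW)$, Lemma~\ref{dist to prod} gives the distance estimate $d_{\AM(S)}(\tx_n,\tmu'_n)\asymp\sum_{Z\in\sigma(\partial W_n)}[d_Z(\tx_n,\tmu'_n)]_K$ for $\tmu'_n\in Q(\partial W_n)$, and for any $Z\pitchfork W_n$ the projection $d_Z(\tx_n,\cdot)$ is coarsely constant on $Q(\partial W_n)$, hence coarsely $d_Z(\tx_n,\partial W_n)$. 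The displayed formula for $\rho(\bmu,\partial\bW)$ in the text says precisely that the distance from a point to $Q_\omega(\partial\bW)$ is measured by the $\omega$-limit of $\frac1{s_n}\sum_{Y\pitchfork W_n}[[d_Y(\cdot,\partial W_n)]]_K$. Since $\bx\in Q_\omega(\partial\bW)$ already lies in the product region, its distance to the product region is zero; but (2) asks for something a priori different, namely that the $Y\pitchfork W_n$ contributions of $\bx$ relative to $\partial W_n$ vanish at linear scale — and for a point \emph{inside} $Q_\omega(\partial\bW)$ these two notions coincide, essentially because $\partial W_n\subset\base(\tx_n)$ forces the interlocking projections to be coarsely bounded (compare Lemma~\ref{dist to prod}: $d_Y(\tx,\ty)\asymp 1$ for $Y\pitchfork\Gamma$). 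So condition (2) is automatic for points of $Q_\omega(\partial\bW)$, and the real equivalence to establish is: for $\bx\in Q_\omega(\partial\bW)$, the $\bW$-coordinate of $\bx$ lies in $\Fwmu$ iff the projections $d_Z(\tx_n,\tmu_n)$ are sublinear over \emph{all} proper $Z\subset S$, not just $Z\subset W_n$.

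The key step is the following observation: for $\bx\in Q_\omega(\partial\bW)$ and any proper subsurface $Z\subsetneq S$, the projection $d_Z(\tx_n,\tmu_n)$ is either (i) coarsely $d_Z(\pi_{\AM(W_n)}(\tx_n),\pi_{\AM(W_n)}(\tmu_n))$ when $Z\subset W_n$ — using the Lipschitz projection Lemma~\ref{lip proj} and the fact that $\pi_{\AM(W_n)}(\tmu_n)$ records exactly the $W_n$-projection data of $\tmu_n$ — or (ii) controlled by $d_Z(\tx_n,\partial W_n)+d_Z(\partial W_n,\tmu_n)$ when $Z\pitchfork W_n$ or $Z\supset W_n$, the first summand being $\asymp\rho$-type data for $\bx$ and the second being finite data for $\bmu$ (bounded because $\bmu$ is thick, together with Behrstock's inequality Proposition~\ref{beh ineq} to handle the interlocking case). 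Splitting the supremum defining $\Fwmu$ versus $\Fmu$-type conditions along these cases, one sees that $\lim_\omega\sup_{Z\subsetneq S}\frac1{s_n}d_Z(\tx_n,\tmu_n)=0$ decomposes into: the $Z\subset W_n$ part, which is exactly the condition $\pi_{\AM_\omega(\bW)}(\bx)\in\Fwmu$; the $Z\pitchfork W_n$ part, which is exactly $\rho(\bx,\partial\bW)=0$; and the $Z\supseteq W_n$ part, which is automatically sublinear because both $\tx_n$ and $\tmu_n$ project into the thick part there. Assembling these three pieces gives the claimed characterization.

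The main obstacle I anticipate is case (ii) — controlling $d_Z(\tx_n,\tmu_n)$ for subsurfaces $Z$ that interlock $W_n$ or strictly contain it, and verifying that nothing linear-scale hides there. This is where one genuinely needs thickness of $\bmu$ (so $\tmu_n$ stays a bounded distance into the thick part of $\AM(S)$, giving bounded projection data to any fixed-topological-type $Z$), together with Behrstock's inequality to route the interlocking contributions into the $\rho(\bx,\partial\bW)$ term rather than leaving them unaccounted. Everything else is a matter of carefully bookkeeping the distance formula Theorem~\ref{distance} and Lemma~\ref{dist to prod} against the definitions of $\Fwmu$, $\rho(\cdot,\partial\bW)$, and $Q_\omega(\partial\bW)$; I would organize the write-up as the two implications ($\Rightarrow$ and $\Leftarrow$) after first recording the trichotomy on proper subsurfaces $Z\subsetneq S$ relative to $W_n$.
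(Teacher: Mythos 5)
The paper actually gives no argument for this lemma at all: it records it as immediate because the identification $Q_{\omega}(\partial \bW) \approx \prod_{Y \in \sigma(\partial \bW)} \AM_{\omega}(Y)$ of Lemma \ref{prod reg} is defined via subsurface projections, so ``the $\AM_{\omega}(\bW)$-coordinate of $\bx$'' \emph{is} $\pi_{\AM_{\omega}(\bW)}(\bx)$ and condition (1) is literally the definition of $\Pwmu$, while condition (2) is automatic for every $\bx \in Q_{\omega}(\partial \bW)$: since $\partial W_n \subset \base(\tx_n)$, one has $d_Y(\tx_n, \partial W_n) \asymp 1$ for all $Y \pitchfork W_n$ (the same observation as in Lemma \ref{dist to prod}), so every term of $\rho(\bx,\partial\bW)$ falls below the threshold and the rescaled limit is $0$. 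Your first two paragraphs contain exactly this, and had you stopped there your proof would be complete and essentially identical to what the paper intends.

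The problem is what you call ``the real equivalence to establish,'' namely that for $\bx \in Q_{\omega}(\partial\bW)$ one has $\pi_{\AM_{\omega}(\bW)}(\bx) \in \Fwmu$ if and only if $d_Z(\tx_n,\tmu_n)$ is sublinear for \emph{all} proper $Z \subsetneq S$; the rest of your proposal is devoted to this, and it is both unnecessary and false. It is unnecessary because the defining condition of $\Fwmu$ only involves projections to subsurfaces $Z$ of $W_n$ (these are the only subsurfaces to which a point of $\AM(W_n)$ projects), so unpacking condition (1) never produces a statement about all $Z \subsetneq S$. It is false because, by Lemma \ref{P is prod}, $\Pwmu \approx \Fwmu \times \AM_{\omega}(\bW^c)$ with the $\AM_{\omega}(\bW^c)$ factor completely unconstrained: a point of $\Pwmu$ can have linearly growing projection distance from $\tmu_n$ in subsurfaces of the complement (which, incidentally, are also missing from your trichotomy $Z \subset W_n$, $Z \pitchfork W_n$, $Z \supseteq W_n$). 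Even on the $Z \pitchfork W_n$ piece your identification fails: $\rho(\bx,\partial\bW)=0$ controls $d_Z(\tx_n,\partial W_n)$, not $d_Z(\tx_n,\tmu_n)$, and the missing term $d_Z(\tmu_n,\partial W_n)$ can grow linearly in $s_n$ --- thickness of $\bmu$ only says its coarse length coordinates grow sublinearly, and does not bound subsurface projections relative to $\partial W_n$, so the appeal to ``bounded because $\bmu$ is thick'' in your case (ii) does not work (the same issue undermines the claim for $Z \supseteq W_n$). The fix is simply to delete the attempted strengthening and keep the definitional argument of your second paragraph.
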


For each $n$, set $W_n^c = \sigma(\partial W_n) \setminus W_n$ and $\bW^c = \{W_n^c\}$.  Then the asymptotic cone $\AM_{\omega}(\bW^c)$ has the product decomposition by Lemma \ref{prod reg}

$$\AM_{\omega}(\bW^c) \approx \prod_{Y \in \sigma(\partial \bW) \setminus \bW} \AM_{\omega}(Y)$$

Thus we have:

\begin{lemma}\label{P is prod}
There is a bi-Lipschitz identification

$$P_{\bW,\bmu} \approx \Fwmu \times \AM_{\omega}(\bW^c)$$
\end{lemma}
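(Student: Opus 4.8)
The plan is to deduce the identification purely from the product decomposition of $Q_{\omega}(\partial \bW)$ together with the characterization of $P_{\bW,\bmu}$ in Lemma \ref{prop of P}; no new geometric input is needed beyond what has already been assembled.

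First I would recall, from Lemma \ref{prod reg} applied in the asymptotic cone, the bi-Lipschitz identification $Q_{\omega}(\partial \bW) \approx \prod_{Y \in \sigma(\partial \bW)} \AM_{\omega}(Y)$ — the quasiisometry of Lemma \ref{prod reg} becomes bi-Lipschitz in the cone, and ultralimits commute with finite products. Since $\bW \in \sigma(\partial \bW)$, I would split off that factor and rewrite this as $Q_{\omega}(\partial \bW) \approx \AM_{\omega}(\bW) \times \prod_{Y \in \sigma(\partial \bW) \setminus \bW} \AM_{\omega}(Y)$, and then identify the second factor with $\AM_{\omega}(\bW^c)$ via the displayed decomposition immediately preceding the statement. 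Under this identification the projection $\pi_{\AM_{\omega}(\bW)}$ is simply the projection onto the first coordinate.

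Next I would invoke Lemma \ref{prop of P}: a point $\bx \in Q_{\omega}(\partial \bW)$ lies in $P_{\bW,\bmu}$ precisely when $\pi_{\AM_{\omega}(\bW)}(\bx) \in \Fwmu$ and $\rho(\bx, \partial \bW) = 0$. The second condition is automatic once $\bx \in Q_{\omega}(\partial \bW)$, since for such $\bx$ we have $d_{Y}(\tx_n, \partial W_n) \asymp 1$ for every $Y \pitchfork W_n$ by Lemma \ref{dist to prod}, hence $\rho(\bx,\partial\bW)=0$. The first condition constrains only the $\AM_{\omega}(\bW)$-coordinate of $\bx$ — the supremum defining $\Fwmu$ ranges over subsurfaces of $W_n$, whose projections are read off from that coordinate alone — and places no restriction on the coordinates in $\AM_{\omega}(\bW^c)$. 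Therefore $P_{\bW,\bmu}$ is exactly the sub-product $\Fwmu \times \AM_{\omega}(\bW^c)$ inside $\AM_{\omega}(\bW) \times \AM_{\omega}(\bW^c)$, and restricting the ambient bi-Lipschitz homeomorphism to this sub-product and its image yields the claimed identification.

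The only point that genuinely needs care is the bookkeeping of base sequences: to make sense of $\AM_{\omega}(\bW)$ and $\AM_{\omega}(\bW^c)$ as asymptotic cones and to guarantee that $\Fwmu$ (as in Theorems \ref{Beh rtree} and \ref{rtree} applied to $\bW$) sits inside the first factor, one uses that $\pi_{\AM(Y)}(\tmu_n)$ is a thick sequence in $\AM(Y)$ whenever $\bmu$ is thick, as already observed. Everything else — in particular that the sup metric on the product of cones really is the metric inherited from $\AM_{\omega}(S)$ on $Q_{\omega}(\partial \bW)$ — is guaranteed by the distance formula (Theorem \ref{distance}) and Lemma \ref{dist to prod}, which say that projections to subsurfaces interlocking $W_n$ contribute nothing to distances within $Q_{\omega}(\partial \bW)$. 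I do not anticipate a serious obstacle; the content of the lemma lies entirely in correctly matching up the pieces of the two product decompositions.
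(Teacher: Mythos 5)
Your proposal is correct and follows essentially the same route as the paper: the lemma is treated as an immediate consequence of the product decomposition $Q_{\omega}(\partial \bW) \approx \prod_{Y \in \sigma(\partial \bW)} \AM_{\omega}(Y)$ (Lemma \ref{prod reg} passed to the cone) together with the definition of $\Pwmu$ as the points of $Q_{\omega}(\partial \bW)$ whose $\AM_{\omega}(\bW)$-coordinate lies in $\Fwmu$, so that $\Pwmu$ is exactly the sub-product $\Fwmu \times \AM_{\omega}(\bW^c)$. Your added remarks on $\rho(\cdot,\partial\bW)$ vanishing on $Q_{\omega}(\partial\bW)$ and on the choice of base sequences are consistent with the paper's setup and do not change the argument.
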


The following theorem, mirroring Theorem 3.4 of \cite{BM08}, constructs locally constant projections to the subsurface $\RR$-trees,  $\Fwmu$:

\begin{theorem} \label{global}
For any $\bmu \in \AM_{\omega}(\bW)$, there is a continuous map

$$\Phi = \Phi_{\bW, \bmu}: \AM_{\omega}(S) \rightarrow \Fwmu$$
with the following properties:

\begin{enumerate}
\item $\Phi$ restricted to $\Pwmu$ is projection to the first factor of $\Pwmu \approx \Fwmu \times \AM_{\omega}(\bW^c)$. \label{prop 1}
\item $\Phi$ is locally constant on $\AM_{\omega}(S) \setminus \Pwmu$. 
\end{enumerate}
\end{theorem}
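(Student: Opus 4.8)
The plan is to build $\Phi_{\bW,\bmu}$ by composing a nearest-point projection onto the product region $Q_\omega(\partial\bW)$ with the retraction $\wp_{\bmu}$ onto $\Fwmu$ that Theorem~\ref{rtree} provides inside $\AM_\omega(\bW)$. More precisely, mimicking the proof of Theorem~\ref{rtree}: given $\bx\in\AM_\omega(S)$ and a path $\gamma$ from $\bx$ to $\Pwmu$, let $\by$ be the entry point of $\gamma$ into $\Pwmu$ and set $\Phi(\bx)=\pi_{\Fwmu}(\by)$, where $\pi_{\Fwmu}$ is the first-coordinate projection under the bi-Lipschitz splitting $\Pwmu\approx\Fwmu\times\AM_\omega(\bW^c)$ of Lemma~\ref{P is prod}. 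For points already in $\Pwmu$ we just take this first-coordinate projection, so property~(\ref{prop 1}) is immediate. The whole content is showing the entry point $\by$, and hence $\Phi(\bx)$, is independent of the path $\gamma$, which is what makes $\Phi$ well-defined, continuous, and locally constant off $\Pwmu$.

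**First I would** reduce path-independence to the statement that $\Pwmu$ is not separated by any of its own points in the relevant sense: if $\gamma,\gamma'$ are two paths from $\bx$ to $\Pwmu$ entering at $\by,\bz$ with $\pi_{\Fwmu}(\by)\ne\pi_{\Fwmu}(\bz)$, then $\gamma\cup\gamma'$ together with a path inside $\Pwmu$ from $\by$ to $\bz$ (which exists since $\Pwmu$ is a product of geodesic spaces, hence path-connected) forms a loop, and I want to derive a contradiction with the tree-like structure of $\Fwmu$ from Theorem~\ref{Beh rtree}. The key geometric input is that any path from a point $\bx\notin\Pwmu$ back to $\Pwmu$ must, before its entry point, stay outside the region, and that the $\Fwmu$-coordinate is ``pinned'' by the complement: concretely, if $\pi_{\Fwmu}(\by)\ne\pi_{\Fwmu}(\bz)$ then by Theorem~\ref{Beh rtree} applied inside $\AM_\omega(\bW)$ the two points of $\Fwmu$ are joined by a unique embedded arc lying in $\Fwmu$, and any path between them in $\AM_\omega(\bW)$ avoiding $\Fwmu$ is impossible — so I must push the outside path $\gamma\cup\gamma'$ down to $\AM_\omega(\bW)$ via the coordinate projection $\pi_{\AM_\omega(\bW)}$ and show its image avoids $\Fwmu$, using that $\gamma,\gamma'$ avoid $\Pwmu$ and the characterization of $\Pwmu$ in Lemma~\ref{prop of P} (i.e.\ $\rho(\bx,\partial\bW)=0$ and $\pi_{\AM_\omega(\bW)}(\bx)\in\Fwmu$ together cut out $\Pwmu$).

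**The main obstacle** — and the place where $\bW$ being a proper subsurface sequence rather than all of $S$ genuinely matters — is that $\pi_{\AM_\omega(\bW)}$ is only a coarse projection on $\AM(S)$, not a well-defined Lipschitz map everywhere: it is controlled only on $Q_\omega(\partial\bW)$, where $\partial\bW$ lies in the base. So a path $\gamma$ in $\AM_\omega(S)\setminus\Pwmu$ need not have a continuous image in $\AM_\omega(\bW)$. The fix, following the $\MM(S)$ argument of \cite{BM08}, is to work with the honest continuous retraction $\wp$ of Theorem~\ref{rtree} upstairs: one shows that $\wp_{\bmu}$ (built from $\Fmu$ for $\bW$ in place of $S$, as noted in the excerpt) is already locally constant off $\Pwmu$, because separating the $\Fwmu$-coordinates forces the underlying markings to diverge linearly in $\CC(\bW)$ (the analogue of Lemma~\ref{prop of wp}(\ref{wp 2})–(\ref{wp 3})), and linear $\CC(\bW)$-divergence together with $\rho(\cdot,\partial\bW)$ staying zero is exactly the data that pins down which component of $\AM_\omega(S)\setminus\Pwmu$ a point lives in. So I would: (i) define $\Phi$ as above; (ii) verify property~(\ref{prop 1}) directly from the product splitting; (iii) prove local constancy off $\Pwmu$ by invoking the $\bW$-version of Theorem~\ref{rtree} and Lemma~\ref{prop of wp}, checking that crossing between different values of $\Phi$ would produce a path in $\AM_\omega(\bW)\setminus\Fwmu$ joining two points of $\Fwmu$, contradicting Theorem~\ref{Beh rtree}; (iv) deduce continuity from local constancy off $\Pwmu$ plus continuity of the first-coordinate projection on the product $\Pwmu$, using local path-connectedness of $\AM_\omega(S)$ exactly as in the proof of Theorem~\ref{rtree}. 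I expect step~(iii) to absorb essentially all the work, and the cleanest route is to quote the earlier $\Fmu$-machinery verbatim with $S\rightsquigarrow\bW$ and only spell out the interaction with the $\rho(\bx,\partial\bW)=0$ condition that distinguishes $\Pwmu$ from the full cone.
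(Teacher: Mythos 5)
Your plan founders on a misdiagnosis of where the difficulty lies, and the fix you propose for that (nonexistent) difficulty leaves the real one unaddressed. First, the ``main obstacle'' you identify is not an obstacle: a complete augmented marking has nonempty projection to every essential subsurface, so $\pi_{\AM(W_n)}$ is defined on all of $\AM(S)$ and is uniformly coarsely Lipschitz by Lemma \ref{lip proj}; it therefore descends to an honest Lipschitz map $\pi_{\AM_{\omega}(\bW)}:\AM_{\omega}(S)\rightarrow\AM_{\omega}(\bW)$ on cones, with no need to restrict to $Q_{\omega}(\partial\bW)$. The paper's map is exactly the composition you rejected, $\Phi=\wp_{\bW,\bmu}\circ\pi_{\AM_{\omega}(\bW)}$; property (\ref{prop 1}) is then immediate from Lemma \ref{P is prod}, and when $\pi_{\AM_{\omega}(\bW)}(\bx)\notin\Fwmu$ local constancy follows at once from the $\bW$-version of Theorem \ref{rtree} together with continuity of the projection. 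Your alternative entry-point definition, if it were well defined, would agree with this map, but its well-definedness is precisely the content you have not established.

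The genuine gap is in your step (iii), which is where essentially all the work sits: the case $\bx\notin\Pwmu$ but $\pi_{\AM_{\omega}(\bW)}(\bx)\in\Fwmu$, so that by Lemma \ref{prop of P} one only knows $\rho(\bx,\partial\bW)>0$. Your proposed argument --- push a path avoiding $\Pwmu$ down to $\AM_{\omega}(\bW)$ and show its image avoids $\Fwmu$, contradicting Theorem \ref{Beh rtree} --- cannot work, because avoiding $\Pwmu$ only negates the conjunction of the two conditions in Lemma \ref{prop of P}: points with $\rho>0$ can (and in this case do, starting with $\bx$ itself) project squarely into $\Fwmu$, so no contradiction with the unique-path property of $\Fwmu$ is available, and for the same reason the tree-like structure alone cannot give path-independence of your entry point. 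What is actually needed here is a quantitative estimate, and this is the paper's key step: if $\Phi(\bx)\neq\Phi(\by)$ then Lemma \ref{prop of wp} gives $d_{W_n}(\tx_n,\ty_n)\underset{\omega}{\longrightarrow}\infty$; for each subsurface $Y_n\pitchfork\partial W_n$ witnessing $\rho(\bx,\partial\bW)>0$ (so $d_{Y_n}(\tx_n,\partial W_n)>M_1$ for $\omega$-a.e.\ $n$), two applications of Behrstock's inequality (Proposition \ref{beh ineq}) plus the triangle inequality yield $d_{Y_n}(\ty_n,\partial W_n)<M_1$ and hence $d_{Y_n}(\tx_n,\ty_n)\geq d_{Y_n}(\tx_n,\partial W_n)-M_1-D$; summing over the subsurfaces appearing in the expression for $\rho(\bx,\partial\bW)$ and running a threshold trick in the distance formula (Theorem \ref{distance}) gives $d_{\AM_{\omega}(S)}(\bx,\by)\geq c'>0$ with $c'$ comparable to $\rho(\bx,\partial\bW)$, which is exactly local constancy at $\bx$. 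Your proposal never supplies, or even gestures at, this Behrstock-inequality mechanism, so the hard case of property (2) remains unproved.
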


\begin{proof}

For any $\bx \in \AM_{\omega}(S)$, set $\Phi(\bx) = \wp_{\bW,\bmu} \circ \pi_{\AM_{\omega}(\bW)}(\bx)$.\\

Property (\ref{prop 1}) follows immediately from the definition of $\Phi$ and Lemma \ref{P is prod}.\\

Let $\bx, \by \in \AM_{\omega}(S)$ be such that $\Phi(\bx) \neq \Phi(\by)$.  If $\pi_{\AM_{\omega}(\bW)}(\bx) \notin \Fwmu$, then the result follows from Theorem \ref{rtree} and the continuity of $\pi_{\AM_{\omega}(\bW)}$.\\ 

The case when $\pi_{\AM_{\omega}(\bW)}(\bx) \in \Fwmu$ is the bulk of the proof.  Note that Lemma \ref{prop of P} implies that $\rho(\bx, \partial \bW) >0$.  Since $\pi_{\AM_{\omega}(\bW)}(\bx) \in \Fwmu$, we have $d(\bx, \Phi(\bx)) = \rho(\bx, \partial \bW)$ by definition of $\Phi$ and Lemma \ref{lip proj}.  The goal of the proof is to find a lower bound for $d_{\AM_{\omega}(S)}(\bx,\by)$ in terms of $\rho(\bx, \partial \bW)$.\\

The main idea is the following: Since $\Phi(\bx) \neq \Phi(\by)$, Proposition \ref{dist to prod} tells us that $\underset{\omega}{\lim}\frac{1}{s_n} d_{Y_n}(\tx_n,\ty_n)>0$ for some subsurfaces $Y_n \pitchfork \partial W_n$.   Bounding these distances below in terms of $d(\bx, \Phi(\bx))$ is the key, which is done via Behrstock's inequality (Lemma \ref{beh ineq}).  A threshold trick with the distance formula (Theorem \ref{distance}) then gives the desired bound.\\

First, observe that $\rho(\bx, \partial \bW)>0$ implies that 

\begin{equation}\label{phi 1}
\underset{\omega}{\lim} \frac{1}{s_n} \sum_{Y \pitchfork \partial W_n} \left[\left[(\tx_n, \partial W_n)\right]\right]_K > c > 0
\end{equation}
and thus there are subsurfaces $Y_n \subset S$ for which $\underset{\omega}{\lim} \frac{1}{s_n} d_{Y_n}(\tx_n, \partial W_n) >0$.  Next, observe that since $d_{W_n}(\ty_n, \pi_{\AM(W_n)}(\ty_n)) \asymp 1$ for all $n$, the definition of $\Phi$ and Lemma \ref{prop of wp} imply that $d_{W_n}(\tx_n,\ty_n) \underset{\omega}{\longrightarrow} \infty$.\\

We can now use Behrstock's inequality (Proposition \ref{beh ineq}) to build a bound for $d_{Y_n}(\tx_n, \ty_n)$.  For any such $Y_n \pitchfork W_n$ as above, we have $d_{Y_n}(\tx_n, \partial W_n) >M_1$ for $\omega$-a.e. $n$, so that Proposition \ref{beh ineq} implies that $d_{W_n}(\tx_n, \partial Y_n)< M_1$ for $\omega$-a.e. $n$.  Since $d_{W_n}(\tx_n, \ty_n) \underset{\omega}{\longrightarrow} \infty$, the triangle inequality implies that $d_{W_n}(\ty_n, \partial Y_n)>d_{W_n}(\tx_n, \ty_n) - M_1 - D$ for $\omega$-a.e. $n$ where $D$ is the Lipschitz constant for subsurface projections from Lemma \ref{lip proj}.  Again using the fact that $d_{W_n}(\tx_n, \ty_n) \underset{\omega}{\longrightarrow} \infty$, we may apply Proposition \ref{beh ineq} to obtain $d_{Y_n}(\ty_n, \partial W_n)<M_1$, with the triangle inequality implying that $d_{Y_n}(\tx_n, \ty_n) > d_{Y_n}(\tx_n, \partial W_n) - M_1 - D$.\\

Applying this estimate to each $Y$ appearing in equation (\ref{phi 1}), one can then use an easy trick with the thresholds in the distance formula (Theorem \ref{distance}) to obtain the desired bound $d_{\AM_{\omega}(S)}(\bx, \by)>c'>0$.

\end{proof}

\subsection{Separators in $\AM_{\omega}(S)$} \label{sep sect}

In this subsection, we construct a family of subspaces $\mathcal{L}$ of $\AM_{\omega}(S)$ which separates points.  It is significant that these separators will be homeomorphic to $\AM_{\omega}(W)$, where $W \subset S$ is some proper, essential subsurface.  In the next subsection, we use an argument from \cite{BM08} to reduce the Dimension Theorem to the existence of such a family.

\begin{theorem} \label{sep thm}
There exists a family, $\mathcal{L}$, of subspaces of $\AM_{\omega}(S)$ which separates points and for each $L \in \mathcal{L}$, $L$ is isometric to $\AM_{\omega}(W)$, where $W \subset S$ is some essential (not necessarily connected) subsurface with $r(W) \leq r(S)$. 
\end{theorem}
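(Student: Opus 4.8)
The plan is to produce, for each pair of distinct points $\bx,\by\in\AM_\omega(S)$, one separator $L=L_{\bx,\by}$ of the asserted form, and then to take $\mathcal{L}=\{L_{\bx,\by}\}$; since $L_{\bx,\by}$ separates the very pair defining it, $\mathcal{L}$ separates points. Everything rests on Theorem \ref{global} together with a good choice of subsurface, as follows. Among all sequences $\bV=\{V_n\}$ of essential subsurfaces of $S$ (allowing $V_n=S$, and allowing annuli) for which $\pi_{\AM_\omega(\bV)}(\bx)\ne\pi_{\AM_\omega(\bV)}(\by)$, choose $\bW$ with $r(\bW)$ minimal; such a $\bV$ exists because the constant sequence $\{S\}$ qualifies and $r$ is bounded, and minimality forces $\bW$ connected, since $\AM(Y^{(1)}\sqcup Y^{(2)})\cong\AM(Y^{(1)})\times\AM(Y^{(2)})$ so if $\bx,\by$ differed on a disconnected subsurface they would differ on one of its components. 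Write $\bmu=\pi_{\AM_\omega(\bW)}(\bx)$ and $\bnu=\pi_{\AM_\omega(\bW)}(\by)$, so $\bmu\ne\bnu$. Minimality of $r(\bW)$ together with the coarse compatibility of nested subsurface projections (Lemma \ref{lip proj}, \cite{MM00}) yields $\lim_\omega\sup_{Z\subsetneq W_n}d_{\AM(Z)}(\tmu_n,\tnu_n)/s_n=0$ — for if some proper $Z_n\subsetneq W_n$ realized a positive limit of this sup, then $\{Z_n\}$ would be a separating sequence of strictly smaller $r$ — hence $\bnu\in\Fwmu$, while $\bmu\in\Fwmu$ trivially. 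By Theorem \ref{global} (and Theorem \ref{rtree}, which is the identity on $\Fwmu$) we get $\Phi_{\bW,\bmu}(\bx)=\bmu\ne\bnu=\Phi_{\bW,\bmu}(\by)$, two distinct points of the $\RR$-tree $\Fwmu$ (Theorem \ref{Beh rtree} applied to $\bW$).

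Now pick $t$ in the interior of the embedded arc $[\bmu,\bnu]\subset\Fwmu$ and set
\[
L \;=\; \Phi_{\bW,\bmu}^{-1}(t)\cap P_{\bW,\bmu}.
\]
By Theorem \ref{global}(\ref{prop 1}) and Lemma \ref{P is prod}, $L$ is exactly the slice $\{t\}\times\AM_\omega(\bW^c)$ of the product region $P_{\bW,\bmu}\approx\Fwmu\times\AM_\omega(\bW^c)$, hence is isometric to $\AM_\omega(W)$ with $W=\bW^c=\sigma(\partial\bW)\setminus\bW$ an essential, in general disconnected, subsurface; since $\bW$ is nonempty one has $r(W)=r(S)-r(\bW)-|\partial\bW|\le r(S)$.

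It remains to check that this slice $L$ separates $\bx$ from $\by$ in $\AM_\omega(S)$ — and here it matters that $L$ is the slice rather than the full fiber $\Phi_{\bW,\bmu}^{-1}(t)$. The set $P_{\bW,\bmu}$ is closed, being $Q_\omega(\partial\bW)\cap\pi_{\AM_\omega(\bW)}^{-1}(\Fwmu)$, an intersection of closed sets (the distance to $Q_\omega(\partial\bW)$ and $\pi_{\AM_\omega(\bW)}$ are continuous, and $\Fwmu$ is closed); so the components of $\AM_\omega(S)\setminus P_{\bW,\bmu}$ are open. Because $\Phi_{\bW,\bmu}$ is continuous and, by the local constancy in Theorem \ref{global}, constant on each such component, the set $\Phi_{\bW,\bmu}^{-1}(t)\setminus P_{\bW,\bmu}$ is a union of whole components, each of which meets $P_{\bW,\bmu}$ only along $L$. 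Let $A$ be the $\Phi_{\bW,\bmu}$-preimage of the component of $\Fwmu\setminus\{t\}$ containing $\bmu$. Then $A$ is open, contains $\bx$ but not $\by$, is disjoint from $L$, satisfies $\overline A\setminus A\subset\Phi_{\bW,\bmu}^{-1}(t)$ (since in the $\RR$-tree $\Fwmu$ the closure of that component adds only $t$), and $\overline A$ is disjoint from the open components comprising $\Phi_{\bW,\bmu}^{-1}(t)\setminus P_{\bW,\bmu}$; hence $\overline A\setminus A\subset L$, so $A$ is clopen in $\AM_\omega(S)\setminus L$ and separates $\bx$ from $\by$. Taking $\mathcal{L}=\{L_{\bx,\by}\}$ finishes the proof.

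The main obstacle is the bookkeeping around the choice of $\bW$: one must see that the ``no proper divergence'' condition forced by minimality puts $\bnu$, not merely $\bmu$, into the $\RR$-tree $\Fwmu$, since this is exactly what turns Theorem \ref{global} into a genuine $\RR$-tree separation rather than a trivial one; and one must be careful to take the separator to be the slice $L\subset P_{\bW,\bmu}$ and not the full fiber, so that the extra locally constant pieces of the fiber, which attach to $P_{\bW,\bmu}$ only along $L$, do not belong to $\mathcal{L}$ and do not reconnect the two sides. The annular case $r(\bW)=0$ is subsumed: then $\AM_\omega(\bW)$ is itself the $\RR$-tree $\Fwmu$, $\wp_{\bW,\bmu}$ is the identity on it, and the argument runs verbatim with $W=\bW^c$ the non-pants components of $S$ cut along the core of $\bW$, where $r(W)=r(S)-1$. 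The degenerate case $\bW=\{S\}$ (when $\bx,\by$ diverge in no proper subsurface) gives $W=\emptyset$ and $L$ a single point, which is a legitimate separator precisely because thick cones consist entirely of cut points (Corollary \ref{not homeo}).
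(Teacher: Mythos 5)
Your proposal is correct and follows essentially the same route as the paper: choose a complexity-minimal sequence $\bW$ on which the projections of $\bx,\by$ differ (so both land in the $\RR$-tree $\Fwmu$), take a cut point $t$ on the arc between them, let $L$ be the slice $\{t\}\times\AM_{\omega}(\bW^c)$ of $P_{\bW,\bmu}$, and use the locally constant retraction of Theorem \ref{global} to promote the separation from $P_{\bW,\bmu}$ to all of $\AM_{\omega}(S)$. Your version merely makes explicit two points the paper treats tersely --- the point-set argument that the extra components of $\Phi^{-1}(t)$ attach only along $L$, and the degenerate case $\bW=\{S\}$ handled via Theorem \ref{Beh rtree} --- so it is a faithful (slightly more detailed) rendering of the paper's proof.
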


\begin{proof}

The proof proceeds exactly as the proof of Theorem 3.6 of \cite{BM08}.\\

Fix $\bx \neq \by \in \AM_{\omega}(S)$.  We claim there is a sequence of subsurfaces $\bW$ such that

\begin{enumerate}
\item $d_{\AM_{\omega}(\bW)}(\bx, \by) > 0$
\item For any $\mathbf{Y} \subsetneq \bW$, we have $d_{\AM_{\omega}(\mathbf{Y})}(\bx, \by) = 0$.

\end{enumerate}

In the case that $S=\bW$ works, the result follows from Theorem \ref{Beh rtree}.  If not, then we may choose $\bW' \subsetneq \bW$ with $d_{\AM_{\omega}(\bW')}(\bx, \by) > 0$ and proceed.  This process terminates because $r(\bW') < r(\bW)$.\\

Let $\bx' = \pi_{\AM_{\omega}(\bW)}(\bx)$ and $\by' = \pi_{\AM_{\omega}(\bW)}(\by)$, so that $\bx' \neq \by'$ but $\by' \in \Fwxx$ by assumption.  Let $\bz \in \Fwxx$ be any point lying on path between $\bx'$ and $\by'$ in $\Fwxx$.  Theorem \ref{Beh rtree} implies that $\bz$ separates $\bx'$ and $\by'$ in $\AM_{\omega}(\bW)$.\\

Let $L \subset \Pwxx$ be the subspace bi-Lipschitz homeomorphic to $L \approx \{\bz\} \times \AM_{\omega}(\bW^c)$ by Lemma \ref{P is prod}.  We clearly have that $L$ separates $\Pwxx$.  We now use the locally constant retraction $\Phi = \Phi_{\bW, \bx'}: \AM_{\omega}(S) \rightarrow \Fwxx$ constructed in Theorem \ref{global} to show that it separates all of $\AM_{\omega}(S)$ with $\bx$ and $\by$ on different sides.\\

Note that $\Phi(\bx) = \bx'$ and $\Phi(\by) = \by'$.  Separate $\Fwxx \setminus \{\bz\}$ into two different components $E_1$ and $E_2$.  Since $\Phi$ is continuous, $\Phi^{-1}(E_1)$ and $\Phi^{-1}(E_2)$ are disjoint open sets containing $\bx$ and $\by$, respectively.  Since $\Phi$ is locally constant, we see that $\Phi^{-1}(\bz)= L \cup V$, where $V$ is an open set disjoint from $E_1$ and $E_2$.  Thus $L$ separates $\bx$ from $\by$ in $\AM_{\omega}(S)$.\\

Since $L$ is homeomorphic to the the asymptotic cone $\AM_{\omega}(\bW^c)$, $L$ is closed.  Since $\bW$ is $\omega$-a.e. homeomorphic to some subsurface $W \subset S$, we have that $L$ is isometric to $\AM_{\omega}(W^c)$, completing the proof.
\end{proof}

\subsection{Proof of the Dimension and Rank Theorems}\label{rank thick sect}

The finishing touches to the proofs of the Dimension and Rank Theorems for $\TT(S)$ proceed identically to that of $\MCG(S)$ as in \cite{BM08}, with only a small note to deal with the difference between thick and thin cones.  We include the details for completeness.\\

Dimension theory is a branch of topology which studies various notions of dimension, the main three being small inductive dimension ($\ind$), large inductive dimension ($\Ind$), and covering dimension ($\dim$), which is also called the topological dimension (see \cite{Eng95}).  These also have inductive versions, where one takes the supremum of the above dimensions over locally compact subspaces of the ambient space, which we denote by $\wind$, $\wInd$, and $\wdim$.\\\
 
Our main theorem is:

\begin{theorem} [Dimension for thick cones] \label{dim thm 2}
For any thick base sequence, $\wind(\AM_{\omega}(S)) = \wInd(\AM_{\omega}(S)) = \wdim(\AM_{\omega}(S)) = r(S)$
\end{theorem}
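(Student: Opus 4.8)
The plan is to prove the chain of equalities $\wind(\AM_{\omega}(S)) = \wInd(\AM_{\omega}(S)) = \wdim(\AM_{\omega}(S)) = r(S)$ by combining a general fact from dimension theory, the separator family constructed in Theorem \ref{sep thm}, an induction on complexity, and the lower bound coming from Minsky's product regions. First I would recall the standard dimension-theoretic inequalities $\wind \leq \wInd$ and the fact that for separable metric spaces $\Ind = \dim$ on each locally compact piece, so that all three inductive dimensions agree; thus it suffices to bound $\wInd(\AM_{\omega}(S))$ above by $r(S)$ and to exhibit a locally compact subset of topological dimension at least $r(S)$.

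For the upper bound, I would use the inductive characterization of $\wInd$ together with Theorem \ref{sep thm}: any two points of $\AM_{\omega}(S)$ can be separated by some $L \in \mathcal{L}$, where $L$ is isometric to $\AM_{\omega}(W)$ for a proper essential subsurface $W \subsetneq S$ with $r(W) \leq r(S)$ — and in fact, unwinding the construction, the relevant $L$ is $\AM_{\omega}(\bW^c)$ with $\bW^c$ a proper subsurface, so $r$ strictly drops. The standard lemma is: if every pair of points in a locally compact separable metric space $X$ can be separated by a closed set of large inductive dimension $\leq k-1$, then $\wInd(X) \leq k$. Feeding in the inductive hypothesis that $\wInd(\AM_{\omega}(W)) \leq r(W)$ for all proper $W$, and noting $r(W) \leq r(S) - 1$ when $W$ is proper (one loses at least one pants curve), we get $\wInd(\AM_{\omega}(S)) \leq r(S)$. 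The base case is when $S$ has complexity small enough that $\AM_{\omega}(S)$ is an $\RR$-tree or a point, handled directly. Here I must also address the thick-versus-thin distinction flagged before the theorem: the separator argument and Theorem \ref{sep thm} are stated for thick cones, but the pieces $\AM_{\omega}(W)$ appearing as separators may a priori be thin cones of $\AM(W)$; by Theorem \ref{thin cones} a thin cone is a product of thick cones of lower-complexity pieces and $\RR$-trees, so the inductive bound still applies to them (this is Corollary \ref{dim thm 3} alluded to in the text).

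For the lower bound, I would invoke Minsky's product region theorem (Theorem \ref{r:min prod}) and Lemma \ref{prod reg}: take a maximal simplex $\Delta \subset \CC(S)$ with $|\Delta| = r(S)$ and the product region $Q(\Delta) \approx \prod_{\gamma \in \Delta} \HHH_\gamma$, which embeds quasi-isometrically; taking a product of geodesic rays in each horoball factor gives a bi-Lipschitz embedded copy of $[0,\infty)^{r(S)}$ in $\AM(S)$, hence a bi-Lipschitz embedded $[0,1]^{r(S)}$, which persists in the asymptotic cone and is compact of topological dimension $r(S)$. Therefore $\wdim(\AM_{\omega}(S)) \geq r(S)$, and combined with the upper bound and the equality of the three inductive dimensions, all four quantities equal $r(S)$.

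The main obstacle I expect is making the induction bookkeeping airtight — specifically, verifying that the separators $L \approx \AM_{\omega}(\bW^c)$ genuinely have strictly smaller $r$-complexity than $S$ (so the induction is well-founded), that the local compactness hypotheses needed for the dimension-theory lemma transfer correctly to the relevant locally compact subsets (one only controls $\wInd$, i.e.\ the dimension over locally compact subspaces, not $\Ind$ of the whole cone, which may be infinite), and that the thin cones arising inductively are correctly reduced to products of lower-complexity thick cones via Theorem \ref{thin cones} without circularity. The separation-to-inductive-dimension implication itself is the classical Menger–Urysohn type theorem and should be quoted from \cite{Eng95}; the real work is checking its hypotheses in this setting, exactly as in \cite{BM08}.
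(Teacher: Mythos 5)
Your proposal is correct and follows essentially the same route as the paper: the lower bound via Minsky's product regions (an $r(S)$-dimensional orthant of horoball rays surviving in the cone), and the upper bound by induction on $r$ using the separator family of Theorem \ref{sep thm} together with the dimension-theoretic facts $\wind=\wInd=\wdim$ and the separation lemma (Lemmas 4.1--4.2 of \cite{BM08}), with the base case $r(S)=1$ handled by $\RR$-trees and the thick/thin issue at lower complexity resolved through Theorem \ref{thin cones} and subadditivity of $\ind$ over products. The paper's own proof is exactly this argument, so no further comparison is needed.
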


Since $\ind$ is subadditive over products (see \cite{Eng95}), we have the following immediate corollary of Theorems \ref{thin cones} and Theorem \ref{dim thm 2}:

\begin{corollary} [Dimension for thin cones] \label{dim thm 3}
For any thin base sequence, $\wind(\AM_{\omega}(S)) = \wInd(\AM_{\omega}(S)) = \wdim(\AM_{\omega}(S)) = r(S)$
\end{corollary}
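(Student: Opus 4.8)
The plan is to feed the product decomposition of Theorem \ref{thin cones} into the dimension computation for thick cones, Theorem \ref{dim thm 2}, using subadditivity of dimension over products for the upper bound and an explicit cube for the matching lower bound. The whole argument is an induction on the complexity $r(S)$; in the base case, where $S$ admits no proper non-pants subsurface, the thin cone is a product of $\RR$-trees and the statement is immediate.

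First I would do the bookkeeping. By Theorem \ref{thin cones}, $\AM_{\omega}(S)$ is bi-Lipschitz homeomorphic to $\prod_j \AM_{\omega}(\mathbf{Y}_j) \times \prod_{i=1}^{\mathbf{A}(\bmu)} \mathbf{T}_i$. Each $\mathbf{T}_i$ is an $\RR$-tree, hence $\wind(\mathbf{T}_i) = \wInd(\mathbf{T}_i) = \wdim(\mathbf{T}_i) = 1$; it is nondegenerate since $D_{\alpha_{n,i}}(\tmu_n) > s_n$ for $\omega$-a.e.\ $n$, so the horoball $\HHH_{\alpha_{n,i}}$ is unbounded at scale $s_n$ and $\mathbf{T}_i$ contains a line. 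Each $\mathbf{Y}_j$ is a \emph{proper} subsurface of $S$, and by construction $\{\tmu_{n,j}\}$ is a thick sequence in $\AM(\mathbf{Y}_j)$, so the inductive hypothesis (Theorem \ref{dim thm 2} applied to $\mathbf{Y}_j$) yields $\wind(\AM_{\omega}(\mathbf{Y}_j)) = \wInd(\AM_{\omega}(\mathbf{Y}_j)) = \wdim(\AM_{\omega}(\mathbf{Y}_j)) = r(\mathbf{Y}_j)$. Finally, extending the $\mathbf{A}(\bmu)$ disjoint curves $\alpha_{n,i}$ to a pants decomposition of $S$ gives the topological count $\sum_j r(\mathbf{Y}_j) = r(S) - \mathbf{A}(\bmu)$.

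For the upper bound, subadditivity of $\ind$, $\Ind$, and $\dim$ over finite products (in the hatted versions, applied to locally compact subspaces of the factors; see \cite{Eng95} and the discussion in \cite{BM08}) shows that each of the three quantities for $\AM_{\omega}(S)$ is at most $\sum_j r(\mathbf{Y}_j) + \sum_{i=1}^{\mathbf{A}(\bmu)} 1 = r(S)$. For the lower bound, the inductive hypothesis also provides, inside $\AM_{\omega}(\mathbf{Y}_j)$, a locally compact set bi-Lipschitz to the cube $[0,1]^{r(\mathbf{Y}_j)}$ (this is exactly how the lower bound in Theorem \ref{dim thm 2} is realized, via Minsky's product regions, Theorem \ref{r:min prod}), while each $\mathbf{T}_i$ contains an arc; the product of these is a locally compact subset of $\AM_{\omega}(S)$ homeomorphic to $[0,1]^{r(S)}$, so all three hatted dimensions are at least $r(S)$. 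Combining the two bounds closes the induction.

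I do not expect a serious obstacle here: all the analytic work is already contained in Theorems \ref{thin cones} and \ref{dim thm 2}, and what remains is the product estimate for the dimension invariants together with the count $\sum_j r(\mathbf{Y}_j) = r(S) - \mathbf{A}(\bmu)$. The only point deserving a word of care --- the ``small note'' distinguishing thick from thin cones --- is that $\AM_{\omega}(S)$ is itself not a thick cone in this case, so Theorem \ref{dim thm 2} cannot be invoked for it directly; but every factor of the Theorem \ref{thin cones} decomposition is either an $\RR$-tree or a thick cone of strictly smaller complexity, so the induction on $r(S)$ goes through.
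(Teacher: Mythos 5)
Your argument is correct and follows essentially the same route as the paper: the paper deduces the corollary immediately from the product decomposition of Theorem \ref{thin cones}, the thick-cone computation of Theorem \ref{dim thm 2} applied to the factors, and subadditivity of $\ind$ over products, which is exactly your upper bound together with the count $\sum_j r(\mathbf{Y}_j) + \mathbf{A}(\bmu) = r(S)$. Your explicit lower-bound via cubes from Minsky product regions and arcs in the trees is just a spelled-out version of the same lower bound the paper already invokes, so there is nothing substantively different to flag.
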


The Dimension Theorem \ref{r:main intro} follows immediately from Theorem \ref{dim thm 2} and Corollary \ref{dim thm 3} because $\RR^n$ is locally compact.  The Rank Theorem \ref{r:rank intro} is an immediate consequence.\\

Before preceding with the proof, we recall two lemmata from \cite{BM08}, the first of which unifies the above notions of dimension in our setting:

\begin{lemma}[Lemma 4.1 in \cite{BM08}]
For a metric space, $\wind = \wInd = \wdim$.
\end{lemma}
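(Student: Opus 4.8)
The plan is to treat this as a purely topological statement and reduce it to the classical coincidence theorems of dimension theory. Since $\wind X$, $\wInd X$, $\wdim X$ are by definition the suprema of $\ind L$, $\Ind L$, $\dim L$ respectively, taken over the \emph{same} family of locally compact subspaces $L \subseteq X$, it suffices to prove that
$$\ind L = \Ind L = \dim L$$
for every locally compact metrizable space $L$; taking suprema of this chain of equalities then gives the lemma. Two of the three equalities are free: every such $L$ is itself a metric space, so the Kat\v{e}tov--Morita theorem (see \cite{Eng95}) gives $\Ind L = \dim L$ directly, and hence $\wInd X = \wdim X$ with no use of local compactness. Likewise the inequality $\ind L \le \Ind L$ holds for all metric spaces, so $\wind X \le \wInd X$ is automatic. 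The entire content is therefore the reverse inequality, i.e. $\Ind L \le \ind L$ for $L$ locally compact metric.

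For this I would use the structure of locally compact metric spaces. Being metrizable, $L$ is paracompact (Stone's theorem), and a locally compact paracompact Hausdorff space is a topological sum $L = \coprod_{\alpha} L_\alpha$ of clopen $\sigma$-compact subspaces (see \cite{Eng95}). Each $L_\alpha$ is $\sigma$-compact and metric, hence separable metric, so the classical coincidence theorem (Urysohn--Menger--Hurewicz--Wallman; see \cite{Eng95}) applies and gives $\ind L_\alpha = \Ind L_\alpha = \dim L_\alpha$. Finally, $\ind$, $\Ind$ and $\dim$ are each additive over topological sums — since the $L_\alpha$ are clopen, covers and neighborhood families can be chosen componentwise — so $\ind L = \sup_\alpha \ind L_\alpha = \sup_\alpha \Ind L_\alpha = \Ind L$. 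Combining with the previous paragraph, $\ind L = \Ind L = \dim L$, and the lemma follows by taking suprema.

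The main point to get right, and the only place the hypothesis of local compactness is genuinely used, is the decomposition of $L$ into separable clopen pieces: without it one only has $\Ind L = \dim L$ from Kat\v{e}tov--Morita, and $\ind$ really can be strictly smaller for non-separable metric spaces (Roy's complete metric space has $\ind = 0$ but $\Ind = \dim = 1$). Everything else is an invocation of standard results from \cite{Eng95}, so I expect the write-up to be short, with the decomposition step and the componentwise additivity of the three dimension functions over topological sums being the only items needing a sentence of justification rather than a bare citation.
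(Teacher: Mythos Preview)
Your argument is correct and is essentially the standard dimension-theoretic reduction one would expect. Note, however, that the paper does not supply a proof of this lemma at all: it is simply quoted from \cite{BM08} (``we recall two lemmata from \cite{BM08}''), so there is no in-paper proof to compare against. Your outline---reduce to $\ind L = \Ind L = \dim L$ for locally compact metric $L$, use Kat\v{e}tov--Morita for $\Ind = \dim$, and obtain $\ind = \Ind$ by decomposing $L$ into clopen $\sigma$-compact (hence separable metric) summands where the classical coincidence theorem applies---is exactly the argument Behrstock--Minsky give in \cite{BM08}, and your identification of the Roy example as the obstruction in the non-locally-compact case is apt.
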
 

The second lemma reduces gives the characterization of small inductive dimension to a problem of producing separators:

\begin{lemma}[Lemma 4.2 in \cite{BM08}] \label{sep lemma}
Suppose a metric space $X$ admits a family of subspaces $\mathcal{L}$ which separates points and $\wind(L)\leq N-1$ for each $L \in \mathcal{L}$.  Then $\wind(X) =\wInd(X) = \wdim(X) \leq N$.
\end{lemma}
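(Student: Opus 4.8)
The statement is a standard fact from dimension theory once one unwinds the definitions, so the plan is to verify it directly via the inductive (partition) characterization of small inductive dimension, feeding in the two hypotheses at the appropriate points. Since the quoted Lemma~4.1 of \cite{BM08} gives $\wind = \wInd = \wdim$ for every metric space, it is enough to prove $\wind(X) \le N$, which by definition of $\wind$ means $\ind(K) \le N$ for every locally compact subspace $K \subseteq X$. Fix such a $K$, a point $p \in K$, and an open neighborhood $W$ of $p$ in $K$; I must produce an open set $U$ with $p \in U \subseteq W$ and $\ind(\partial_K U) \le N-1$, where $\partial_K$ denotes frontier computed in $K$. The role of local compactness is to replace an arbitrary $W$ by one with compact closure: choose an open $V$ with $p \in V$, $\overline{V}^{\,K}$ compact, and $\overline{V}^{\,K} \subseteq W$, and set $F = \overline{V}^{\,K} \setminus V$, a compact subset of $K$ not containing $p$.

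For each $q \in F$ the hypothesis that $\mathcal{L}$ separates points provides some $L_q \in \mathcal{L}$ and a partition $X = A_q \sqcup L_q \sqcup B_q$ with $A_q, B_q$ open, $p \in A_q$ and $q \in B_q$. Intersecting with $K$ yields the partition $K = (A_q \cap K) \sqcup (L_q \cap K) \sqcup (B_q \cap K)$, so each $L_q \cap K$ is closed in $K$. The open sets $B_q \cap K$ cover the compactum $F$, so finitely many do: $F \subseteq B_{q_1} \cup \dots \cup B_{q_m}$. Put $A = \bigcap_{i=1}^{m} A_{q_i}$ (read as $X$ when $m=0$) and $U = V \cap A$; then $U$ is open and $p \in U \subseteq V \subseteq W$. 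A routine frontier computation shows $\partial_K U \subseteq \bigcup_{i=1}^{m}(L_{q_i} \cap K)$: a point $z \in \partial_K U$ cannot lie in $F$, since then some $B_{q_i} \cap K$ would be an open neighborhood of $z$ disjoint from $A \supseteq U$, contradicting $z \in \overline{U}^{\,K}$; hence $z \in V$, and as $z \notin U$ it misses some $A_{q_i}$ while still lying in $\overline{A_{q_i} \cap K}^{\,K}$, so $z \in \partial_K(A_{q_i} \cap K) \subseteq L_{q_i} \cap K$.

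Finally I bound $\ind(\partial_K U)$. Since $\partial_K U$ is closed in the compactum $\overline{V}^{\,K}$ it is itself compact, hence a separable metric space, so on it $\ind$, $\Ind$ and $\dim$ agree and the finite closed sum theorem is available. Writing $\partial_K U = \bigcup_{i=1}^{m} F_i$ with $F_i = \partial_K U \cap (L_{q_i} \cap K)$, each $F_i$ is closed in $\partial_K U$ (because $L_{q_i} \cap K$ is closed in $K$) and is a compact, hence locally compact, subspace of $L_{q_i}$, so $\ind(F_i) \le \wind(L_{q_i}) \le N-1$ by hypothesis; the finite sum theorem then gives $\ind(\partial_K U) \le N-1$. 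This completes the inductive step, so $\ind(K) \le N$; as $K$ was an arbitrary locally compact subspace, $\wind(X) \le N$, and Lemma~4.1 promotes this to $\wind(X) = \wInd(X) = \wdim(X) \le N$. There is no deep obstacle here; the only thing requiring care is the point-set bookkeeping — arranging that the separators restrict to \emph{closed} subsets of $K$, that the frontier $\partial_K U$ is \emph{compact} (so that separability and the sum theorem apply even though $X$ itself may be non-separable), and that the pieces $F_i$ are genuinely \emph{locally compact} subspaces of the $L_{q_i}$, which is exactly what licenses the use of the hypothesis $\wind(L_{q_i}) \le N-1$.
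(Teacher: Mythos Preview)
Your proof is correct. The paper does not supply its own argument for this lemma; it is simply quoted from \cite{BM08} without proof, so there is nothing in the present paper to compare against. What you have written is a clean, self-contained dimension-theory verification: local compactness of $K$ lets you arrange a compact frontier, the point-separating hypothesis furnishes finitely many closed separators covering that frontier, and the finite closed sum theorem (valid on the separable metric space $\partial_K U$) together with the hypothesis $\wind(L)\le N-1$ finishes the inductive step. The bookkeeping you flag at the end---closedness of the $L_{q_i}\cap K$, compactness of $\partial_K U$, and local compactness of the pieces $F_i$ so that $\wind$ applies---is exactly the content one has to check, and you have checked it.
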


We now proof Theorem \ref{dim thm 2} as an easy application of Lemma \ref{sep lemma} with $N = r(S)$ via Theorem \ref{sep thm} :

\begin{proof}[Proof of Theorem \ref{dim thm 2}]
First, note that $\wind(\AM_{\omega}(S)) \geq r(S)$ follows from Minsky's Product Regions Theorem \ref{r:min prod}, as a product of rays in horoballs gives an $r(S)$-dimensional orthant in $\TT(S)$.\\

When $r(S) = 1$, then $S = S_{0,4}, S_{1,1},$ or $S_{0,2}$.  In the first two cases, $\TT(S)$ is $\HH^2$, and in the latter case $\AM(S)$ is a horoball, all of which are hyperbolic and thus have $\RR$-trees as asymptotic cones.  As $\RR$-trees are well known to satisfy $\wind = 1$, the theorem holds in these cases.\\

In the higher complexity cases, Theorem \ref{sep thm} produced the family $\mathcal{L}$, which separates points in $\AM_{\omega}(S)$ and consists of subspaces, $L$ homeomorphic to $\AM_{\omega}(W^c)$, where $W$ is some essential subsurface.  The function $r$ is additive over products, so $r(W^c) \leq r(S) - 1$.  Applying induction and the fact that $\ind$ is subadditive over products and additive over disjoint unions (see \cite{Eng95}) gives that $\wind(L) \leq r(S) - 1$.  Lemma \ref{sep lemma} completes the proof. 
\end{proof}

\end{document}